\documentclass[12pt]{amsart}
\usepackage{amsthm}
\usepackage{amssymb}
\usepackage{wasysym} 
\usepackage{pifont}
\usepackage{graphicx}
\usepackage{psfrag}
\usepackage{datetime}
\usepackage{rotating}

\usepackage[usenames, dvipsnames]{xcolor}

\usepackage{subfigure}
\usepackage{geometry}
\usepackage[all]{xy}

\geometry{a4paper}
\usepackage{hyperref}

\let\emptyset\varnothing


\title[Scott map, tilings, and flip equivalence classes]{
The fibres of the Scott map on polygon tilings 
are the flip equivalence classes}
\author{Karin Baur$^*$}
\address{${}^*$ Department of Mathematics and scientific computing, University of Graz, Nawi Graz, 8010 Graz, Austria}
\author{Paul P. Martin$^{**}$}
\address{${}^{**}$ Department of Pure Mathematics, University of Leeds, Leeds LS2 9JT, UK}

%

\begin{document}



\newtheorem{lm}{Lemma}[section]
\newtheorem*{lemma}{Lemma}
\newtheorem{prop}[lm]{Proposition}

\newtheorem*{corollary}{Corollary}
\newtheorem{cor}[lm]{Corollary}
\newtheorem{theorem}[lm]{Theorem}
\newtheorem*{thm}{Theorem}
\newtheorem{conj}[lm]{Conjecture}

\theoremstyle{definition}
\newtheorem{defn}[lm]{Definition}
\newtheorem*{defini}{Definition}
\newtheorem*{notation}{Notation}
\newtheorem{claim}[lm]{Claim}
\newtheorem{ex}[lm]{Example}
\newtheorem*{exas}{Beispiele}
\newtheorem*{rem}{Remark}
\newtheorem{remark}[lm]{Remark}
\newtheorem{qu}{Question}
\newtheorem*{obs}{Observations}
\newtheorem{para}[lm]{}
\newtheorem{idea}[lm]{Idea}

\newcommand{\mdef}{\refstepcounter{lm}\medskip\noindent{\bf (\thelm) } }


\renewcommand{\Re}{{\mathbb R}}  

\newcommand{\RR}{\operatorname{\mathbb{R}}\nolimits}
\newcommand{\Z}{\operatorname{\mathbb{Z}}\nolimits}

\newcommand{\ttt}{\tau}  
\newcommand{\Anii}{A_{n,(i,i+1)}}
\newcommand{\Scott}{{\boldsymbol\sigma}}  
\newcommand{\Scottt}{{\protect\overrightarrow{\boldsymbol\sigma}}\!}  

\def\commentK#1{\textcolor{blue}{ [\texttt{#1}]}}  

\newcommand{\qqq}[1]{\footnote{\textcolor{blue}{#1}}}
\newcommand{\ignore}[1]{}
\newcommand{\rank}{order}
\newcommand{\triangular}{flip}
\newcommand{\triangulation}{flip}
\newcommand{\tiling}{strand}
\newcommand{\regular}{regular}
\newcommand{\ul}[1]{\underline{#1}}
\newcommand{\Power}{{\mathcal P}}
\newcommand{\gam}{\gamma}
\newcommand{\mdefn}[1]{\begin{defn} #1 \end{defn}} 

\newcommand{\mat}[1]{\left( \begin{array}{#1} }
\newcommand{\tam}{\end{array} \right) }

\newcommand\scalemath[2]{\scalebox{#1}{\mbox{\ensuremath{\displaystyle #2}}}}


\newcommand{\SM}{(S,M)} 
\newcommand{\ASM}{A\SM} 
\newcommand{\del}{\Delta} 
\newcommand{\FST}{\cite{FominShapiroThurston}}

\newcommand{\N}{{\mathbb N}}
\newcommand{\pp}{p} 
\newcommand{\profile}{profile}
\newcommand{\oprofile}{ordered profile}
\newcommand{\Md}{M_\partial}  

\newcommand{\SMP}{(S,\underline{m},\pp)}

\newcommand{\te}[1]{[ #1 ]_{\Delta}}  
\newcommand{\ASD}{alternating strand diagram}

\newcommand{\Sym}{\Sigma}  

\newcommand{\srep}{stellar-replacement}
\newcommand{\Gst}{{\mathsf G}}

\newcommand{\pgr}{\gamma}
\newcommand{\Pb}{{\mathsf {Pl}}} 
\newcommand{\Pbr}{{\mathsf {Pb}}} 
\newcommand{\Pf}{{\mathsf {Pf}}}  
\newcommand{\Pfim}{{\mathsf {Pt}}}  
\newcommand{\bouquet}{bouquet} 
\newcommand{\Gprime}{\Gst'}
\newcommand{\mm}{{\mathsf m}}  

\newcommand{\ppm}[1]{\textcolor{green}{#1}}
\newcommand{\kb}[1]{\textcolor{orange}{#1}}
\newcommand{\kub}[1]{\textcolor{blue}{#1}}
\newcommand{\face}{face}  

\newcommand{\fd}{./figd/}  
\newcommand{\figgdir}{./fig/} 
\newcommand{\figg}[2]{\includegraphics[width=#1cm]{\figgdir #2}}
\newcommand{\includegraphicss}[2][]{\includegraphics[#1]{\fd #2}}

\newcommand{\pfig}[2]{\begin{figure}
    \includegraphics[width=5cm]{./fignew/#1.eps}
    \caption{#2 \label{fig:#1}}
  \end{figure}
}





\newcommand{\newfigd}{fignew/}  


\maketitle

\begin{abstract}
We define a map from 
tilings of surfaces with marked points
to strand diagrams,
generalising Scott's construction for the case of  
triangulations of polygons. 
We thus obtain a map from tilings of surfaces to permutations of 
the marked points on boundary components, the {\em Scott map}. 
In the disk case  (polygon tilings) we prove that the 
fibres of the Scott map are the flip equivalence classes. 

The result allows us to consider 
the size of the image as a generalisation of a classical combinatorial
problem.
We hence  determine the size 
in low ranks. 
\end{abstract}



\section{Introduction}\label{ss:intro}



In a groundbreaking paper \cite{scott} Scott 
proves that the homogeneous coordinate ring of a Grassmannian
has a cluster algebra structure. In the process Scott gives 
a construction for Postnikov diagrams \cite{Postnikov} 
starting from triangular tilings of polygons. 
Given a triangulation $T$, one decorates each triangle with `strands'
\[
\figg{1.25}{tri1} \leadsto\, \figg{1.25}{tri1s}
\hspace{1in}
\figg{1.7}{tri2} \leadsto\ \figg{1.7}{tri2s}
\]
The resultant strand diagram $\Scottt(T)$ 
varies depending on the tiling, but 
induces a permutation $\Scott(T)$ on the polygon vertex set 
that is the same permutation in each case. 
This construction is amenable to generalisation
in a number of ways. 
For example, starting with the notion of triangulation for an arbitrary
marked surface $S$ 
\cite{FockGoncharov,FominShapiroThurston}
(the polygon case extended to include handles, multiple boundary
components,
and interior vertices)
there is a simplicial complex $A(S)$ of tilings 
\cite{Harer85,Hatcher,Ivanov} 
of which the
triangulations are the top dimensional simplices. 
Lower simplices/tilings are obtained by deleting edges from a triangulation.
There is a strand diagram $\Scottt(T)$ in each case
(we define it below, see Figure~\ref{fig:first1}%
(a,b)
for the heuristic).
Thus each marked surface induces a subset $\Scott(A(S))$ of the set of
permutations of its boundary vertices
(see Figures~\ref{fig:hexagon}, \ref{fig:p14} for examples). 


This construction gives rise to a number of questions. The one we
address here is, what are the fibres of this Scott map $\Scott$.
To give an intrinsic characterisation 
is a difficult  
problem in general.
Here we give the answer in
the polygon case, i.e. generalising $\Scott(T)$,
with fibre the set of all triangulations of the polygon, 
to the full $A$-complex of the polygon. 

The answer is in terms of another crucial geometrical device used in 
the theory of cluster mutations \cite{FominShapiroThurston} 
and widely elsewhere (see e.g. 
\cite{Hatcher,FockGoncharov, 
Kirillov,BalsamKirillov10}
and cf. \cite{Buchstaber15})
--- flip equivalence (or the {\em Whitehead move}):
\[
\figg{1.7}{tri2} \leadsto\, \figg{1.7}{tri3}
\] 
Our main Theorem, Theorem~\ref{th:main}, can now be stated 
informally
as in the title. 


We shall conclude this 
introduction 
with some further remarks about related work.
Then from \S\ref{sec:defq} - \S\ref{sec:subdiv-to-permut}
we turn to the precise definitions, 
 formal statement and proof
of Thm.\ref{th:main}.

In \S\ref{sec:count} we report on combinatorial aspects of the problem
--- specifically the size of the  
image of the map $\Scott$
in the polygon cases.
The number of triangulations of polygons is given by the Catalan numbers.
Taking the set $A_r$ of all 
tilings
of the $r$-gon, we have
the little Schr\"oder numbers (see e.g. \cite[Ch.6]{Stanley97})
The image-side problem is open. We use
solutions to Schr\"oder's problem and related problems posed by
Cayley (as in \cite{ps,ppr}), and 
our Theorem to compute the
sequence in low rank $r$, and 
in \S\ref{sec:AEn} prove a key Lemma towards the general
problem. 
To give a flavour of the set $\Scott(A_r) \subset \Sym_r$, the set of vertex permutations:
\[
| \Scott(A_r) | =  1,2,7,26,100,404,1691, \dots \qquad (r=3,4,5,\dots,9)
\]
%


Finally in \S\ref{ss:new-8} 
we give some elementary applications of 
Theorem~\ref{th:main} to Postnikov's alternating strand diagrams and the closely
related reduced plabic graphs \cite{Postnikov}. 
In particular we consider a direct map $\Gst$ from
tilings to plabic graphs 
generalising \cite[\S2]{Propp05}.  
(A heuristic for this `\srep' map is given by the examples
{ \newcommand{\lenf}{1.8cm}
\[
\includegraphics[width=\lenf]{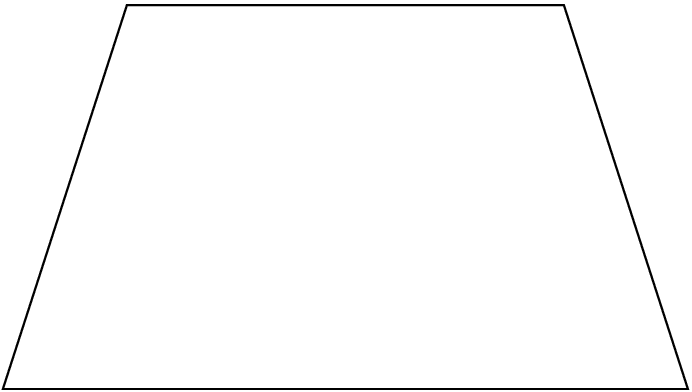}
\leadsto
\includegraphics[width=\lenf]{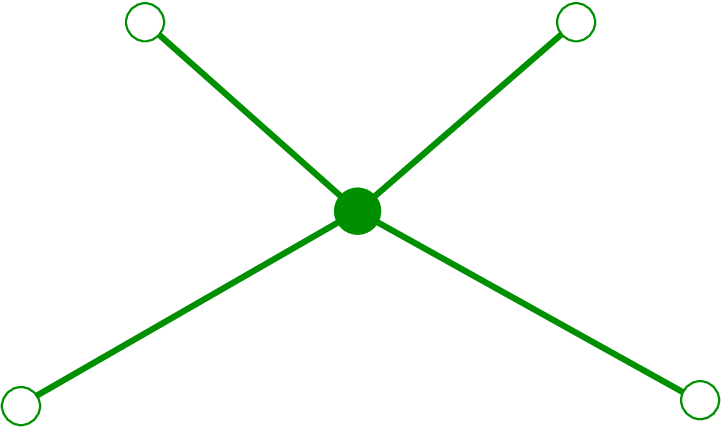}
\hspace{2.1cm}  
\includegraphics[width=2.6cm]{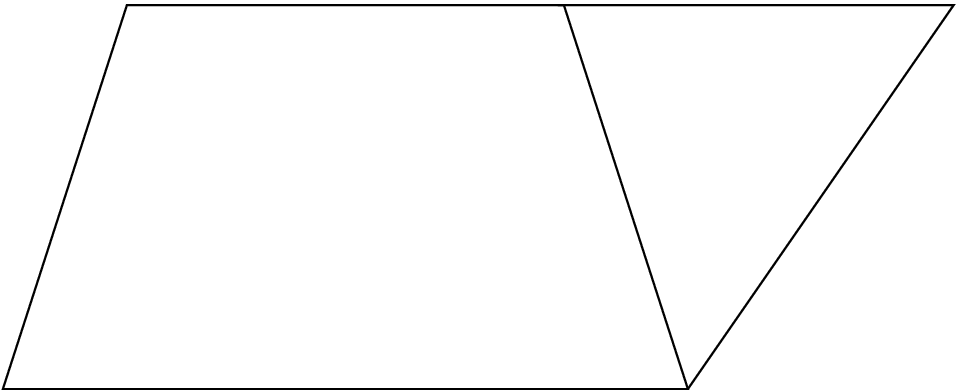}
\leadsto
\includegraphics[width=2.6cm]{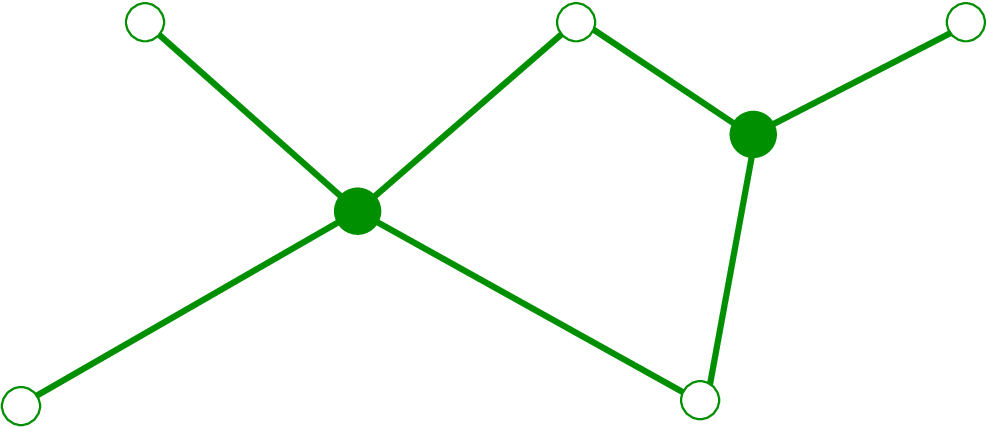}
\]
}
and then Figure~\ref{fig:first1}(c).) 



\begin{figure}
\includegraphics[scale=.3]{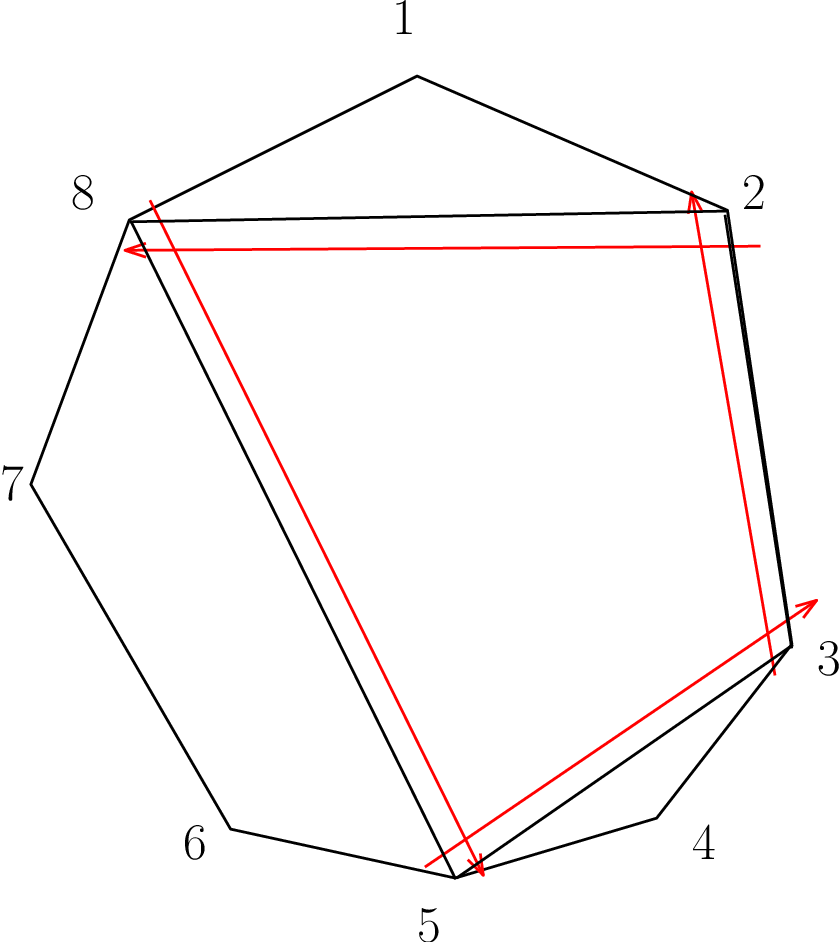}
\hskip .51cm
\includegraphics[scale=.3]{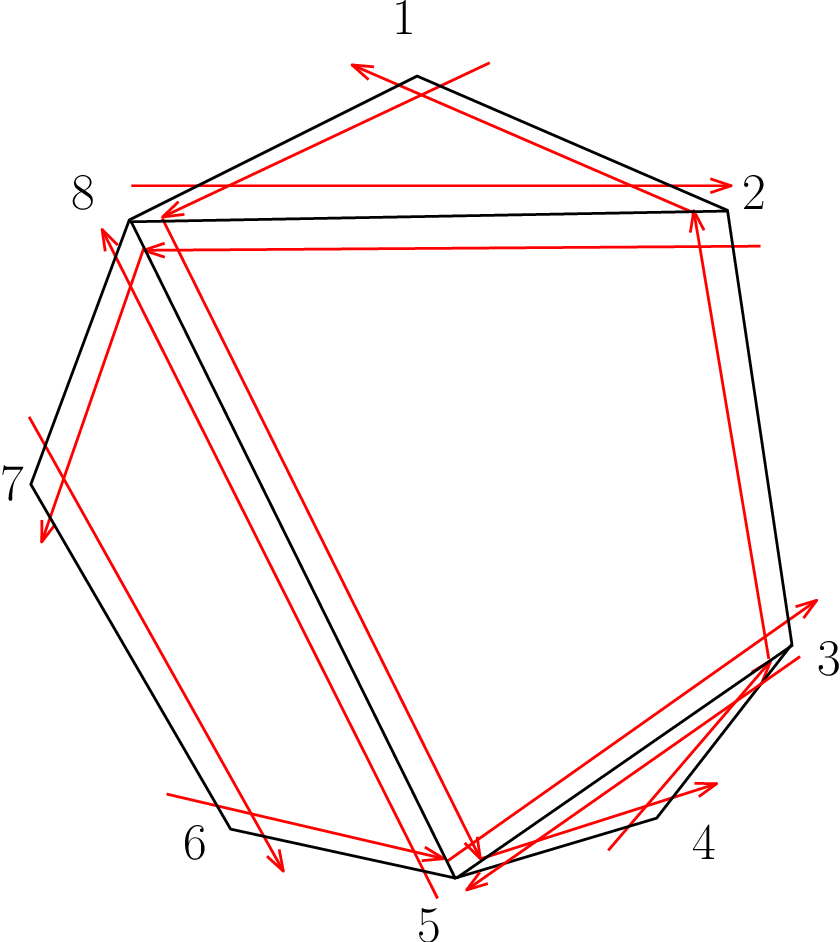}
\hskip .51cm
\includegraphics[scale=.3]{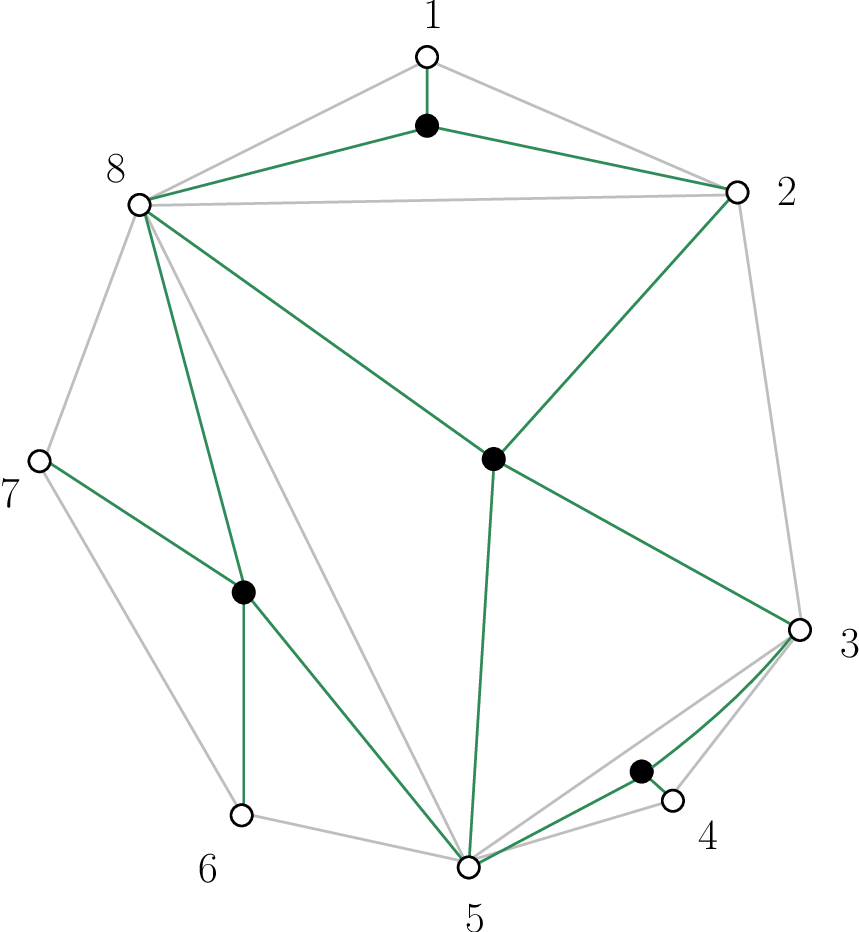}
\caption{(a) Tile with strand segments; 
(b) tiling with strands;
(c) induced plabic graph.
\label{fig:first1}}
\end{figure}



The geometry and topology of the plane, and of two-manifolds,
continues to reward study from several perspectives. 
Recent motivations include 
modelling of anyons for Topological Quantum Computation \cite{Kitaev}, 
fusion categories \cite{BalsamKirillov10},
cluster categories \cite{BaurKingMarsh, BruestleZhang}, 
Teichm\"uller spaces \cite{FockGoncharov,Ivanov},
frieze patterns \cite{Bessenrodt},
diagram algebras \cite{TL321perm,KadarMartinYu,MarshMartin}, 
classical
problems in combinatorics \cite{LandoZvonkin04,ps} and 
combinatorics of symmetric groups and permutations \cite{TL321perm}.
In \cite{BaurKingMarsh} Baur et al. used Scott's map \cite{scott} to 
produce strand diagrams for triangulated surfaces, again with the 
same permutation, in each case. 
This raises the intriguing question of which permutations are
accessible in this way, and the role of the geometry in such
constructions. 
Strictly speaking, 
the precise identification of permutations is dependent, in this setup, on a
labelling convention. It is the numbers of permutations and the fibres
over them (as we investigate here)
that are, therefore, the main invariants accessible in the
present formalism.


\newcommand{\lscalee}{.4}

\begin{figure}
\includegraphics[scale=\lscalee]{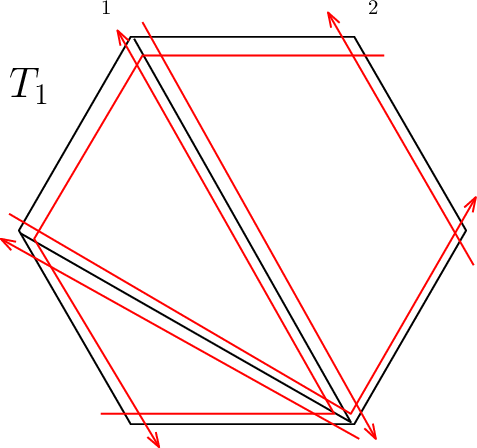}
\hskip .5cm
\includegraphics[scale=\lscalee]{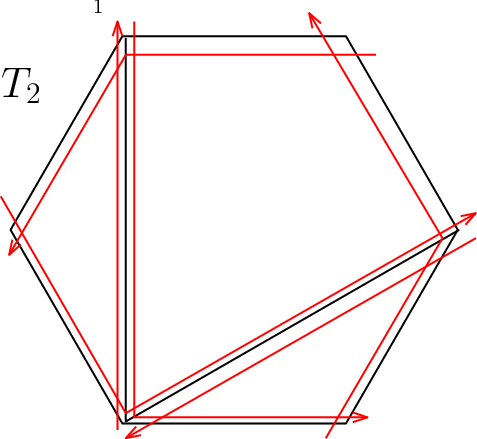}
\hskip .5cm
\includegraphics[scale=\lscalee]{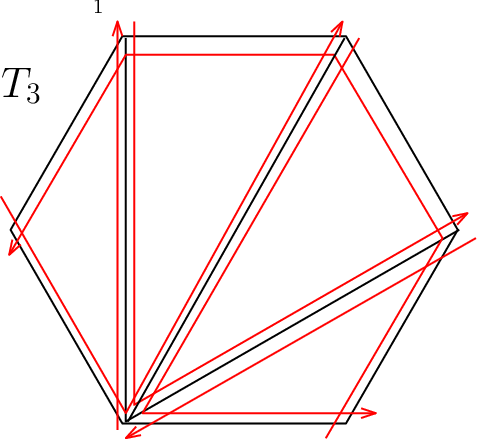}
$$
\Scott(T_1)=(146325)\hskip 1cm \Scott(T_2)=(142635)
    \hskip 1cm \Scott(T_3)=(135)(246)
$$
\includegraphics[scale=\lscalee]{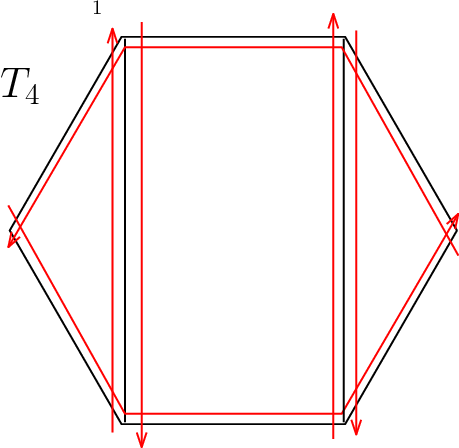}
\hskip .5cm
\includegraphics[scale=\lscalee]{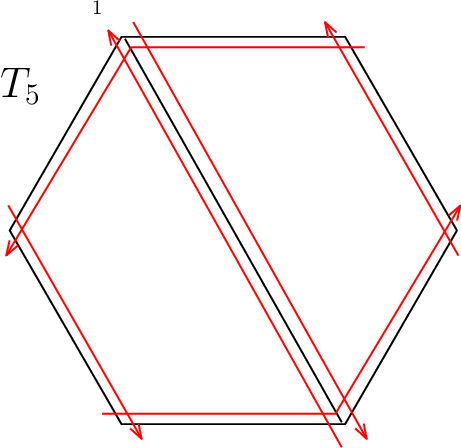}
\hskip .5cm
\includegraphics[scale=\lscalee]{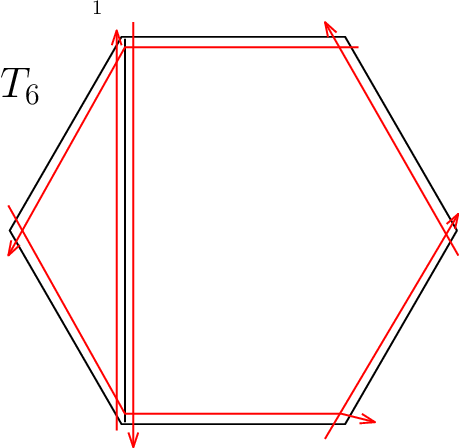}
$$
\Scott(T_4)=(15)(24)(36) \hskip 1cm \Scott(T_5)=(14)(2653) 
\hskip 1cm 
\Scott(T_6)=(15)(2643)
$$
\caption{Examples of tilings, 
  their strand diagrams
and permutations}\label{fig:hexagon}
\end{figure}


\section{Definitions and results} \label{sec:defq}

\newcommand{\Pu}{\operatorname{Pu}\nolimits} 
\newcommand{\Po}{\operatorname{Po}\nolimits} 
\newcommand{\Scottor}{{\mathbf \tau}}  
\newcommand{\umb}{umbral}
\newcommand{\Jor}{Jordan}
\newcommand{\medial}{\Jor}


\newcommand{\ka}{\kappa}

Given any manifold $X$ we write $\partial X$ for the boundary and $(X)$
for $X \setminus \partial X$. For a subset $D \in X$ we write
$\overline{D}$ for its closure \cite{Moise77}.

A {\em marked surface} is an oriented 2-manifold embedded in Euclidean
3-space, $S$; and  a finite subset $M$. 
Set $\Md = M \cap \partial S$.  
An {\em arc} in marked surface $(S,M)$ is a curve $\alpha$ in $S$ 
such that $(\alpha)$ is an embedding of the open interval in 
$(S)\setminus M$;
 $\partial \alpha \subseteq M$;
and if $\alpha$ cuts out a simple disk $D$ from $S$ then 
$|M \cap \overline{D}|>2$. 

Two arcs $\alpha,\beta$ 
in $(S,M)$
are {\em compatible} if there exist representatives 
$\alpha'$ and $\beta'$ in 
their isotopy classes such that $(\alpha')\cap(\beta')=\emptyset$. 

A {\em concrete tiling} 
of $(S,M)$ is a collection of pairwise compatible arcs that
are in fact pairwise non-intersecting. 
A {\em tiling} $T$ is a
boundary-fixing ambient isotopy class of concrete tilings
--- which we may specify by a concrete representative,
with arc set $E(T)$
(it will be clear that this makes sense on classes). 
A {\em tile} of tiling $T$ is a connected component of 
$S\setminus \cup_{\alpha\in E(T)} \alpha$.
We write $F(T)$ for the set of tiles. 
(Note that if $S$ is not homeomorphic to a disk then 
a tile need not be homeomorphic to a disk. 
For example a tile could be the whole of $S$ in the case of 
Figure~\ref{fig:non-polygonal}.)


\begin{figure}
\begin{center}
\includegraphics[scale=.6]{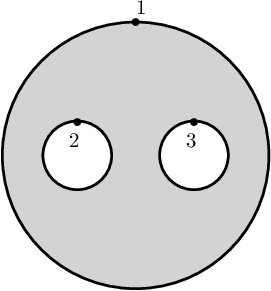}
\includegraphics[scale=.6]{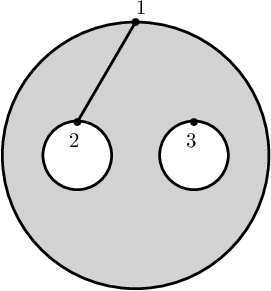}
\includegraphics[scale=.6]{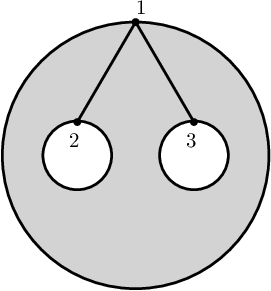}
\includegraphics[scale=.6]{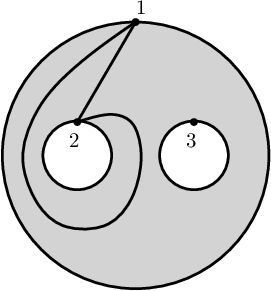}
\includegraphics[scale=.6]{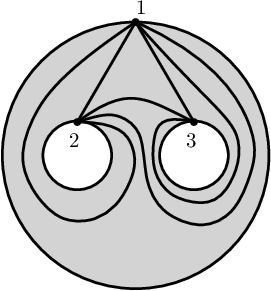}
\end{center}
\caption{
  Tilings of the pants surface.
  Here $\ka(S,M)=6\times0 +3\times 3+2\times 0+3-6=6$.}
\label{fig:non-polygonal}
\end{figure}


Fixing $(S,M)$, 
it is a theorem that 
there are maximal sets of compatible arcs. 
Set 
$\ka(S,M) = 6g+3b+2p+|M|-6$, 
where $g$ is the genus, $b$ the number of connected
components of $\partial S$,  
and $p=|M\cap(S)|$. 
Suppose 
$\ka(S,M) \geq 1$
and every boundary component intersects $M$.
Then $T$ maximal has   $|E(T)| = \ka(S,M)$,
and every tile is a simple disk bounded by three arcs.
Evidently given a tiling $T$ then the removal of an arc yields another
tiling. 
In this sense the set of tilings of $(S,M)$ forms a simplicial complex,
denoted $A(S,M)$.

We say two tilings are related by `flip' if they differ only by the
position of a diagonal triangulating a quadrilateral.
The transitive closure of this relation is called 
{\em \triangular\ equivalence}. 
We write $\te{T}$ for the equivalence class of the tiling $T$;
and $\AE(S,M)$ for the set of classes of $A(S,M)$.  


\subsection{The Scott\ map} \label{ss:scott}
\newcommand{\Pe}{P^{\pm}}
\newcommand{\Me}{M^{\pm}}

Let $L$ be a connected component of the boundary of an oriented
2-manifold, and $P$ a finite subset labeled 
$p_1, p_2,\dots, p_{|P|}$ in the
clockwise order
(a traveller along $P$ in the clockwise direction keeps the manifold
on her right). 
Then an {\em \umb\ set} 
$\Pe$ is a further subset of points $p_i^-$ and
$p_i^+$ ($i \in 1,2,\dots,|P|$) such that 
the clockwise order of all
these points is $\dots,p_i, p_i^+ , p_{i+1}^- , p_{i+1},\dots$.
That is, 
the interval $(p_i^- , p_i^+ ) \subset L$ contains only $p_i$.


Given $(S,M)$,
let $\Me$ denote a fixed 
collection of \umb\ sets over all boundary
components. 
A {\em \Jor\ diagram} $d$ on $(S,M)$ is
a finite number of closed oriented curves in $S$ together with 
a collection of 
$n=|\Md |$
oriented curves 
in $S$ such that each 
curve  
passes from some 
$p_i^+$ to some $p_j^-$ in $\Me$; 
and the collection of endpoints is $\Me$.
Intersections of curves are allowed, but must be transversal.
Write $\Scottor(d)$ for the permutation of $\Md$ this induces. 
That is, if $p_i^+$ goes to $p_j^-$ in $d$ then 
$\Scottor(d)(i)=j$.
Diagram $d$ is considered up to boundary-fixing isotopy. 
Let $\Pu(S,M)$ denote the set of \Jor\ diagrams. 


\begin{figure}
\[
\includegraphics[width=6.95cm]{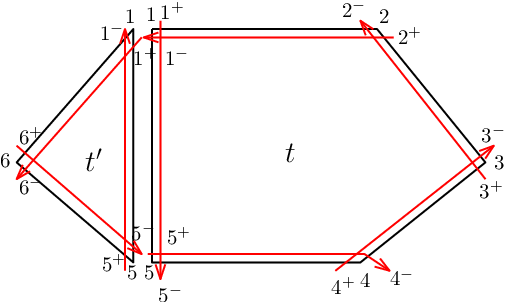}
\] 
\caption{Composing tiles and strand segments. 
Here $ \Scottor(d)(2)=6$.\label{fig:ctass}}
\end{figure}


Next we define a map $\Scottt :A(S,M) \rightarrow \Pu(S,M)$.
Consider a tiling $T $ in $A(S,M)$. 
By construction each  boundary $L$ of a tile $t$ is made up of
segments of arcs, terminating at a set of points $P$. 
%
Hence we can associate $P^{\pm}$ to $P$ 
as above. 
To  arc segment $s$ passing from $p_i$ to $p_{i+1}$ say,
we associate a {\em strand  segment} $\alpha_s$ 
in $t$ passing from $p_{i+1}^+$ to $p_i^-$,
such that the part of the tile on the $s$ side of 
strand segment $\alpha_s$ is a
topological disk. 
Finally strand segment crossings are transversal and minimal in number.
See tile $t$ in Figure~\ref{fig:ctass} for example.



It will be clear that if two tiles meet at an arc segment then the
\umb\ point constructions from each tile can be chosen to agree: 
as in Figure~\ref{fig:ctass}.
Applying the $\alpha_s$ construction to every segment $s$ of every
tile $t$ in $T$,  we thus obtain a collection 
$\Scottt(T)$ 
of strand segments in
$S$ forming strands 
whose collection of terminal points are 
at the \umb\ points of 
$\partial S$;
so that $\Scottt(T)\in\Pu(S,M)$. 
Altogether, writing $\Sym_M$ for the set of permutations of set $M$, 
we have $\Scott : A(S,M) \rightarrow \Sym_{\Md}$ defined by
\begin{equation} 
\label{eq:compost}
\Scott = \Scottor \circ \Scottt
\end{equation}
We call this the {\em Scott map}. 
It agrees with Scott's construction \cite{scott}
in the case of triangulations of simple polygons. 

We remark that the intermediate map $\Scottt$ is injective, 
as we will show later (Theorem~\ref{th:th3}). 
The map $\Scott$ however is clearly not injective, as the image of any triangulation 
of a polygon is the permutation induced by $i\mapsto i+2$. 





\medskip


The  focus of this article is the case where 
$(S,M)$ 
is a polygon $P$ with $n$ vertices. 
We  write $A_n$ for 
$A(S,M)$ in this case,
$\AE_n$ for $\AE(S,M)$,
and $\Pu_n$ for $\Pu(S,M)$. 
Our main result can now be stated: 

\begin{theorem}\label{thm:tilings-permut-inj} \label{th:main}
Let $T_1, T_2$ $\in A_n$ be tilings of an $n$-gon $P$. 
Then 
$\Scott(T_1)=\Scott(T_2)$ if and only if 
$\te{T_1} = \te{T_2}$.
\end{theorem}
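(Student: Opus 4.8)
The plan is to interpose a purely combinatorial invariant between tilings and permutations: for a tiling $T$ of $P$ write $\mathcal N(T)$ for the set of tiles of $T$ with at least four sides (the non-triangular tiles). I would prove the equivalence in three steps, the last being the substantive one.

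\emph{Step 1 (a flip fixes $\Scott$).} Flip equivalence is generated by single Whitehead moves, so it is enough to treat one flip. A flip replaces the two triangular tiles of a quadrilateral region $Q$ by the two triangles cut off by the opposite diagonal and changes nothing outside $Q$, so the strand diagram $\Scottt(T)$ is unchanged outside $Q$. Hence one only has to compare, for the two triangulations of $Q$, the pairing on the four sides of $Q$ that sends the side through which a strand enters $Q$ to the side through which it leaves. This is a bounded check from the local strand rule: the two pairings agree, and the boundary endpoint data is in any case unaffected by closed components. So $\Scott$ is constant on flip classes, which gives the ``if'' direction $\te{T_1}=\te{T_2}\Rightarrow\Scott(T_1)=\Scott(T_2)$.

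\emph{Step 2 ($\te{T}$ is determined by $\mathcal N(T)$).} A flip only ever alters two triangular tiles and their common edge, and never touches a tile with $\ge 4$ sides or any of its edges; hence $\mathcal N(T)$ is a flip invariant. Conversely, if $\mathcal N(T_1)=\mathcal N(T_2)$, then deleting these (pairwise non-crossing) tiles from $P$ leaves a disjoint union of sub-polygons, inside each of which $T_1$ and $T_2$ restrict to triangulations; triangulations of a fixed polygon are connected under flips, and flips in distinct regions are independent, so $T_1$ can be carried to $T_2$ by flips. Thus $\te{T_1}=\te{T_2}$ if and only if $\mathcal N(T_1)=\mathcal N(T_2)$.

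\emph{Step 3 ($\Scott(T)$ determines $\mathcal N(T)$).} Granting Step 2, the ``only if'' direction reduces to showing that $\Scott(T_1)=\Scott(T_2)$ forces $\mathcal N(T_1)=\mathcal N(T_2)$; equivalently, one must reconstruct the non-triangular tiles from $\Scott(T)$. I would proceed by induction on $n$. The base point of the reconstruction is that $\Scott(T)$ equals the permutation common to all triangulations of the $n$-gon precisely when $\mathcal N(T)=\emptyset$ --- that is, no non-triangular tiling realises that permutation. For the inductive step one reads off from $\Scott(T)$ a tile meeting $\partial P$ together with its size, peels it off to obtain a tiling of a smaller polygon whose Scott permutation is a controlled modification of $\Scott(T)$, and invokes the inductive hypothesis; running this on $T_1$ and $T_2$ in lock-step forces the same tiles to be peeled at every stage, whence $\mathcal N(T_1)=\mathcal N(T_2)$. (An alternative route is to show that $\Scottt(T)$ is a reduced Postnikov diagram on which flips of $T$ realise exactly the square moves, with no other Postnikov move forced inside the class of Scott diagrams, and then invoke the classification of reduced diagrams by decorated permutation; but the burden then shifts to controlling the non-square moves within that class.)

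The main obstacle is Step 3. The delicate feature is that a non-triangular tile need not meet $\partial P$: it can be nested arbitrarily deeply inside regions cut off by other non-triangular tiles. The reconstruction must therefore be organised so that the outermost structure is pinned down first from the permutation, and then reapplied to $\Scott(T)$ ``restricted'' to each cut-off sub-polygon; understanding how $\Scott$ transforms under such restriction and peeling is where the real work lies. By contrast Steps 1 and 2 are routine --- a bounded local verification, and elementary combinatorics of dissections together with the classical connectivity of the flip graph of triangulations.
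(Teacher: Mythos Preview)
Your plan is correct and coincides with the paper's inductive proof: the paper also reduces the ``only if'' direction to an ear-peeling induction, detecting an ear of $T_1$ from $\sigma=\Scott(T_1)$ (via $\sigma(i)=i+2$ for a triangular ear, and via maximal runs $j-1,j-2,\dots$ in a cycle together with $\sigma(j-r)=j$ for an $(r{+}1)$-ear), showing $T_2$ has (up to a flip) the same ear, and using a Factorisation Lemma to pass to the sub-polygon behind the ear. Your worry about deeply nested non-triangular tiles is handled automatically by always peeling an ear --- which exists because the open dual graph of a polygon tiling is a tree --- rather than trying to locate interior tiles directly.
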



Sections \ref{sec:machinery}, \ref{sec:pauls-sec-0}
and \ref{sec:2nd-machinery}, \ref{sec:subdiv-to-permut}
are concerned with the proof of this result. 


We will see in Lemma~\ref{lm:pf22}
that $\Scottt(A_n)$ lies in the subset of $\Pu_n$ of
alternating strand diagrams \cite[\S14]{Postnikov}.
Theorem~\ref{th:main} is
thus related to 
Postnikov's result~\cite[Corollary 14.2]{Postnikov} 
that the permutations 
arising from two alternating strand diagrams are the same if and only
if the strand diagrams  
can be obtained from each other through a sequence of 
certain kinds of `moves'. 
Consider the effect of a flip on
the associated strands: 
\[
\includegraphics[scale=.5]{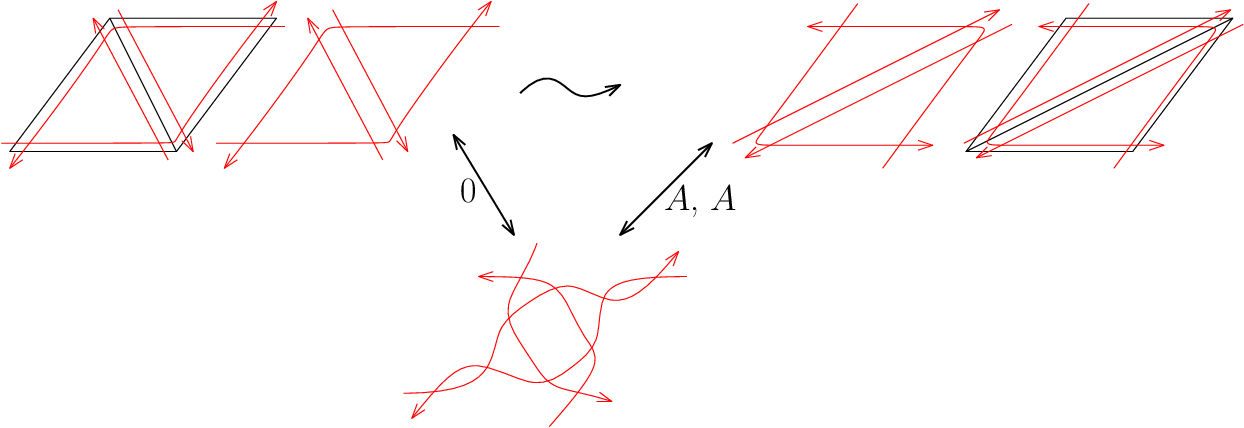}
\]
Comparing with Figure 14.2 of~\cite{Postnikov},
 the diagram
shows that the flip corresponds to a certain combination of
two types of Postnikov's three moves (see Figure~\ref{fig:sdmoves1}).




\newcommand{\psiz}[1]{\mbox{\small{#1}}}
\begin{figure}
\[
\raisebox{-.2031in}{
\includegraphics[width=6cm]{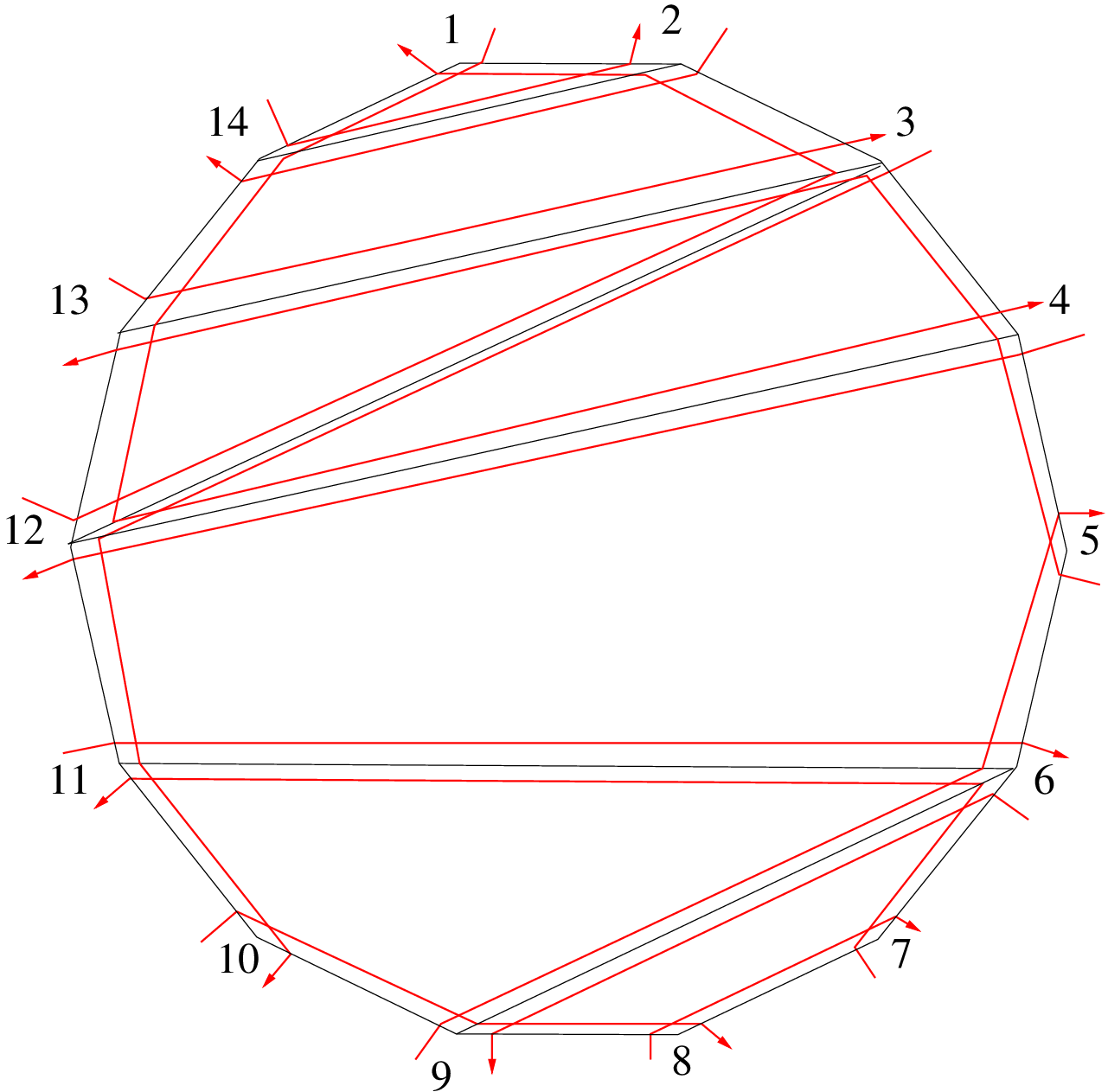}}
\hspace{.3in}
\raisebox{-.2031in}{
\includegraphics[width=6cm]{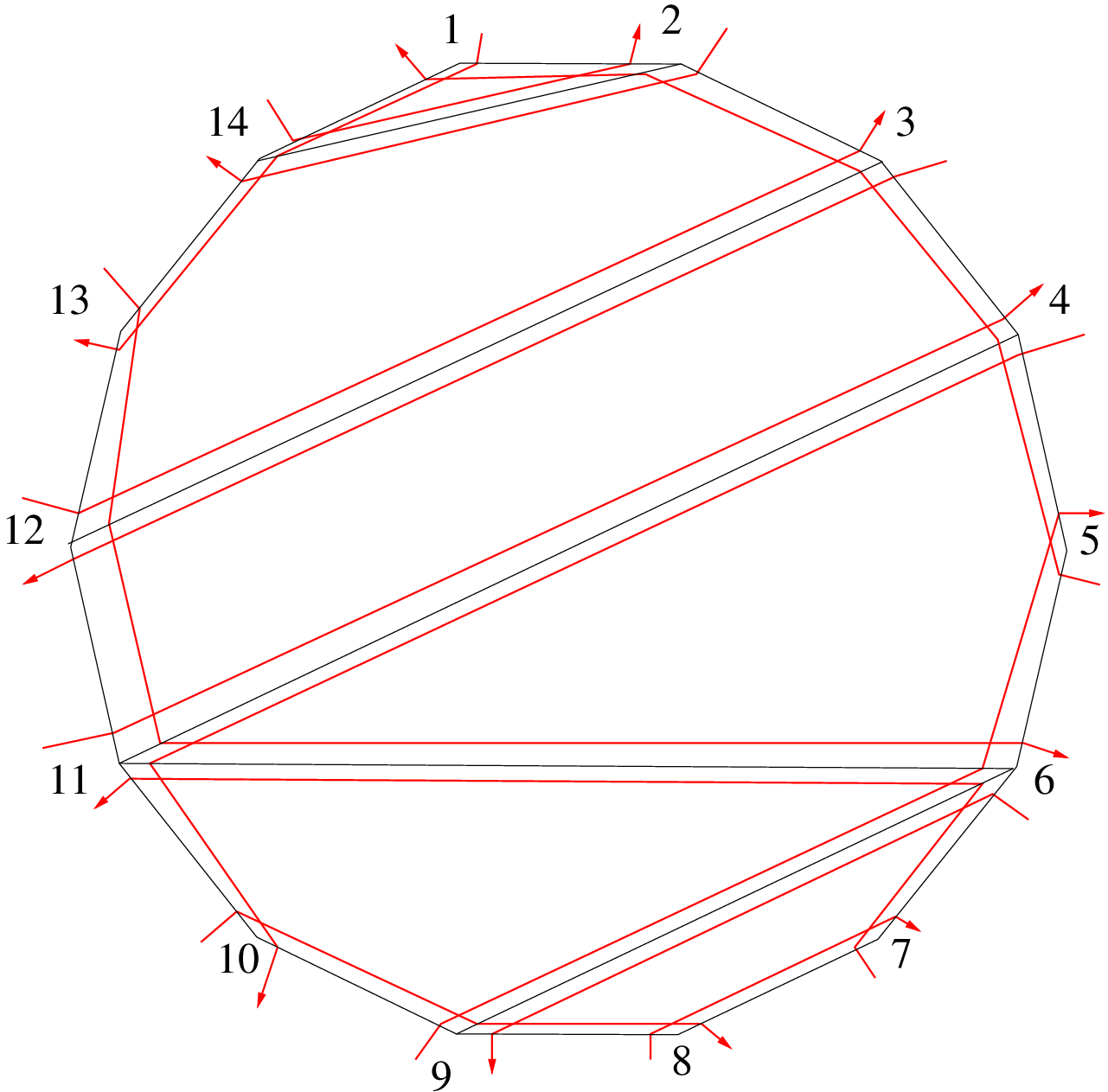}}
\]\[
\hspace{.1in}
\stackrel{\Scott}{\mapsto}
\psiz{(1,4, 12)(2, 14)(3,10,8,7,11,6,9,5,13)}
\hspace{.55in}
\stackrel{\Scott}{\mapsto}
\psiz{(1,13,6,9,5)(2,14)(3,12)(4,10,8,7,11)}
\]
\caption{Examples of tilings, strands and Scott maps
\label{fig:p14}}
\end{figure}


\subsection{Notation for tilings of polygons} 
We note here simplifying features of the polygon case
that are useful in proofs. 


Geometrically we may consider a tile as a subset of polygon $P$
considered as a subset of $\RR^2$. 
This facilitates the following definition. 

\newcommand{\subdivision}{tiling}
\newcommand{\RD}{R_{12}} 
\newcommand{\Tr}{\operatorname{Tr}\nolimits} 
\newcommand{\tp}{triangulated-part}

\begin{defn}\label{def:triangle-part}
Let $T \in A_n$.
By $\Tr(T) \subset \RR^2$ 
we denote the union of all triangles in $T$.
We call $T_1$ and $T_2$ {\em \tp\ equivalent}
if $\Tr(T_1)=\Tr(T_2)$ and they agree on the complement of $\Tr(T_1)$. 
See Figure~\ref{fig:triangle-parts}.
\end{defn}



\begin{figure}
\includegraphics[scale=.6]{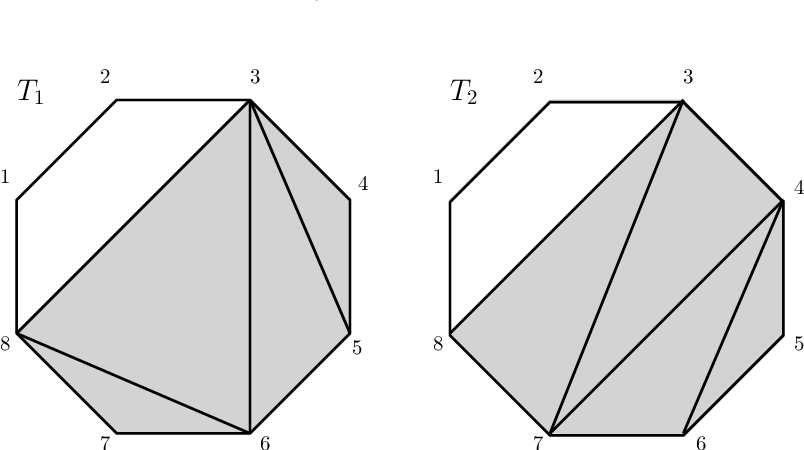} 
\includegraphics[scale=.6]{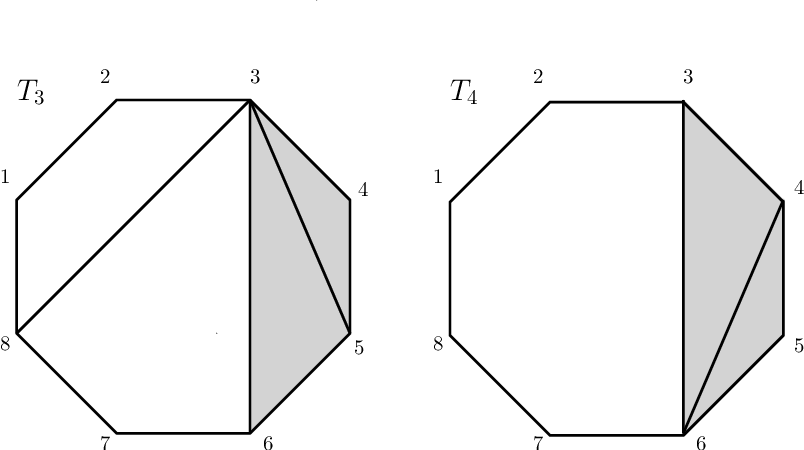} 
\caption{
Tilings of an octagon, and 
the associated $\Tr(T_i)$. Note that 
$T_1,T_2$ are \tp\ equivalent, but $T_3,T_4$ are not.
}\label{fig:triangle-parts}
\end{figure}



By Hatcher's Corollary in~\cite{Hatcher}, 
two tilings are flip equivalent if and only if 
they are \tp\ equivalent. 
The following is immediate.
\begin{lm}
Let $T_1$ and $T_2$ be tilings of an $n$-gon $P$. 
All tiles of size $\ge 4$   
agree in these tilings if and only if
$\te{ T_1} = \te{ T_2 }$.
\end{lm}
 

\newcommand{\di}[1]{[ #1 ]} 

Write $\ul{n}=\{1,2,\dots, n\}$ for the vertex set of $P$, 
assigned to vertices as for example in Figure~\ref{fig:p14}.
The `vertices' of $A_n$ {\em as a simplicial complex} are the 
$n(n-3)/2$ diagonals.
A diagonal between polygon vertices $i,j$ is uniquely determined by
the vertices. We write $\di{i,j}$ for such a diagonal. 
Here order is unimportant.
A tiling in $A_n$ can then be 
given as its set of diagonals.
Example: The tiling from $A_8$ 
in Figure~\ref{fig:first1} is 
$T = \{\di{2,8}, \di{3,5}, \di{5,8} \}$.
An example of a top-dimensional simplex (triangulation) in $A_8$ of which this
$T$ is a face is 
$T \cup \{ \di{3,8}, \di{6,8}\}$.

Equally usefully, focussing instead on tiles, 
we may represent a tiling $T \in A_n$ as a subset of 
the power set  $\Power(\ul{n})$:
for $T \in \Power(\ul{n})$ one includes the subsets that are  
the vertex sets of tiles in $T$. 

\begin{ex}
In tile notation 
the tiling from $A_8$ in Figure~\ref{fig:first1} becomes 
\[
T = \{ \{1,2,8 \},\{2,3,5,8 \},\{3,4,5 \}, \{ 5,6,7,8 \} \}
\]
In this representation, while $A_2 = \emptyset$, 
and $ A_3 = \{ \{ \{ 1,2,3 \} \} \}$,
we have:
\[
A_4 = \{ \{ \{ 1,2,3,4 \}           \}, \;
                \{ \{ 1,2,3 \}, \{1,3,4 \} \} , \; 
                \{ \{ 1,2,4 \}, \{2,3,4 \} \} \}
\]
\end{ex}



We present two proofs of Theorem~\ref{th:main}: 
one by constructing an inverse 
--- in Section~\ref{sec:pauls-sec-0} we show 
how to determine 
the \triangular\ equivalence class from the permutation; 
and one by direct geometrical arguments 
--- see Section~\ref{sec:subdiv-to-permut}.
We first establish 
machinery used by both.



\section{Machinery for proof of Theorem~\ref{th:main}}  
\label{sec:machinery} \label{ss:tree1}


The {\em open dual} 
$\gam(T)$ of tiling $T$ is the dual graph of $T$ regarded as a
plane-embedded graph (see e.g. \cite{BondyMurty})
excluding the exterior face (so restricted to vertex set
$T$). 
See Figure~\ref{fig:treeeg} for an example. 

\begin{lm} \label{lm:tree}
Graph $\gam(T)$ is a tree.
\end{lm}
\begin{proof}
By construction the boundary of plane-embedded graph 
$\gam(T)$ (see e.g. \cite{MarshMartin}) 
has the same number of components as the boundary of $T$.
\end{proof}

A {\em proper tiling} is a tiling with at least two tiles. 
An {\em ear} in a proper tiling $T$ is a tile with one edge a diagonal. 
An $r$-ear is an $r$-gonal ear.

\begin{cor} \label{lm:graphear}
Every proper tiling has at least 2 ears.
\qed
\end{cor}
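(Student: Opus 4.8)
The statement to prove is Corollary~\ref{lm:graphear}: every proper tiling has at least two ears. An ear is a tile with exactly one edge that is a diagonal, so in the open dual $\gam(T)$ an ear corresponds to a leaf (degree-one vertex), since each diagonal of a tile gives an edge in $\gam(T)$ to a neighbouring tile and each boundary edge contributes nothing to the dual. Thus the plan is to translate ``ear'' into ``leaf of $\gam(T)$'' and apply Lemma~\ref{lm:tree}.

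First I would note that $T$ proper means $T$ has at least two tiles, so $\gam(T)$ is a tree with at least two vertices. A standard fact about finite trees is that any tree with $\ge 2$ vertices has at least two leaves (vertices of degree $1$): indeed, take a longest path in the tree; its two endpoints must each have degree $1$, else the path could be extended or a cycle would be created. Alternatively, a tree on $k\ge 2$ vertices has $k-1$ edges, so the degree sum is $2k-2 < 2k$, forcing at least two vertices of degree strictly less than $2$, i.e. of degree $1$ (no isolated vertices in a connected graph on $\ge 2$ vertices).

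Second, I would check the correspondence ``leaf of $\gam(T)$ $\Leftrightarrow$ ear of $T$'' carefully. A tile $t$ is a vertex of $\gam(T)$; its degree equals the number of edges of $t$ that are diagonals (each such diagonal is shared with exactly one other tile, giving one dual edge; here one uses that $T$ is a tiling of a disk, so no diagonal bounds two tiles on the same side or gets counted twice, and the exterior face is excluded so boundary edges contribute nothing). Hence $\deg_{\gam(T)}(t) = 1$ exactly when $t$ has precisely one diagonal edge, which is the definition of an ear. So the two leaves furnished by the tree argument are two ears, completing the proof.

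The only mild subtlety — not really an obstacle — is making sure the degree count in the dual is exactly the number of diagonal edges and that a tile cannot be adjacent to itself or have a diagonal counted with multiplicity; this is automatic since in a polygon tiling each diagonal separates the polygon and is an edge of exactly two tiles, and distinct diagonals of a tile lead to distinct neighbours. Given Lemma~\ref{lm:tree} this is essentially a one-line deduction, so I would keep the proof very short, citing the ``tree has $\ge 2$ leaves'' fact and the identification of leaves with ears.
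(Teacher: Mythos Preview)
Your argument is correct and is exactly the intended one: the paper states the result as an immediate corollary of Lemma~\ref{lm:tree}, and you have simply spelled out that ears correspond to leaves of $\gam(T)$ and that a tree on at least two vertices has at least two leaves. Your caveat about multi-edges is handled cleanly (in a polygon tiling two tiles share at most one diagonal), so there is nothing to add.
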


\begin{figure}
\includegraphics[width=2.5in]{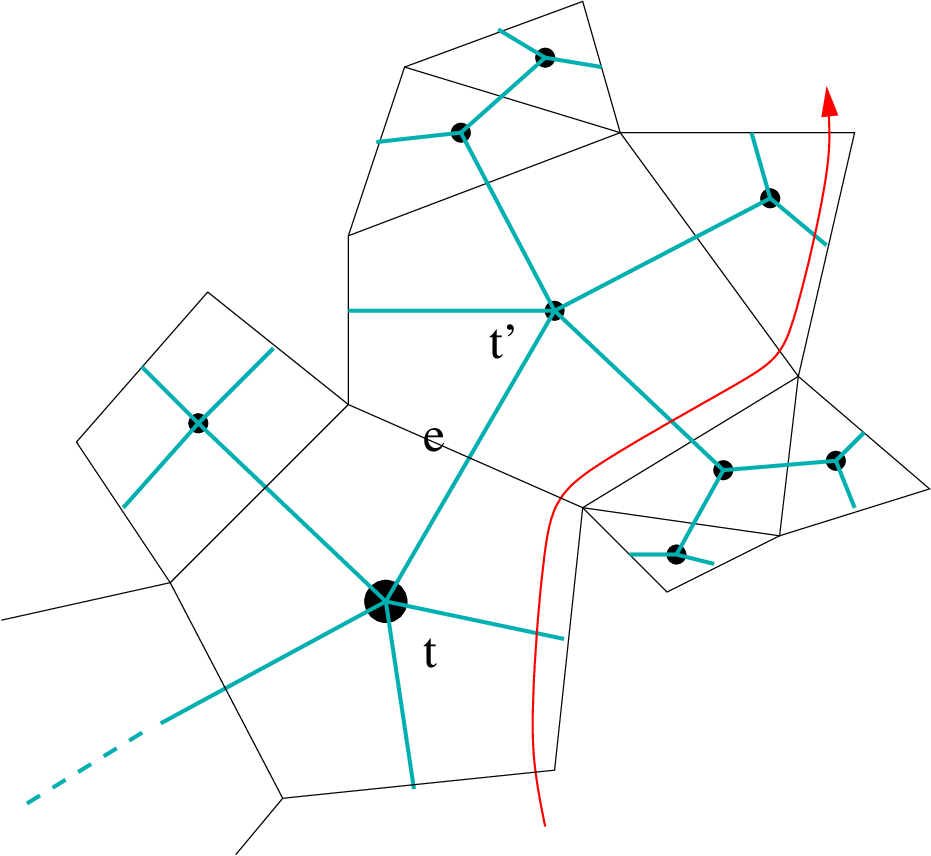}
\caption{\label{fig:treeeg} Dual tree example.}
\end{figure}


\subsection{Elementary properties of strands}

Consider a tiling $T$. 
Note that a tile $t$ in $T$ and an edge $e$ of $t$ 
determine a strand of 
the $\Scott(T)$ construction --- the strand leaving $t$ through $e$. 
Now, when a strand $s$ leaves a tile $t$ through an edge $e$ 
it passes to an adjacent tile $t'$ (as in Figure~\ref{fig:treeeg}), 
or exits $P$ and terminates.
We associate a (possibly empty) branch $\gamma_{t,e}$ of $\gamma(T)$ to this
strand at $e$: the subgraph accessible from 
the vertex of $t'$ without touching
$t$. 
Note that the continuation of the strand $s$ leaves $t'$ at some  edge
$e'$ distinct from $e$, 
and that $\gamma_{t',e'} $ is a subgraph of $ \gamma_{t,e}$. 

\begin{lm} \label{lm:noreturn}
Consider the strand construction on a tiling. 
After leaving a tile at an edge, a strand does not 
return to cross the same edge again.
\end{lm}
\begin{proof}
Consider the strand as in the paragraph above. 
If the strand exits the polygon $P$ at $e$ we are done. Otherwise, 
since the sequence of graphs $\gamma_{t^i,e^i}$
associated to the passage
of the strand is a decreasing sequence 
of graphs, containing each other, it 
eventually leaves $P$ and terminates in some tile of a vertex of 
$\gamma_{t',e'}$ and so does not return to $t$. 
\end{proof}

An immediate consequence of Lemma~\ref{lm:noreturn} is the following: 
\begin{cor}\label{cor:only-one-strand}
A strand of a tiling $T$ can only use one strand segment of a given
tile of $T$. 
\end{cor}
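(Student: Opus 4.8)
I would argue by contradiction. Suppose a strand $s$ of $T$ uses two distinct strand segments $\sigma_1,\sigma_2$ of a single tile $t$; choosing $\sigma_1$ to be the first segment of $t$ traversed by $s$ and $\sigma_2$ the second, $s$ uses no segment of $t$ between them, so between traversing $\sigma_1$ and traversing $\sigma_2$ the strand leaves $t$ through the edge $b$ of $t$ at which $\sigma_1$ ends, later re-enters $t$ through the edge $a$ of $t$ at which $\sigma_2$ begins, and the arc $p$ of $s$ joining these two crossings avoids $t$.

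The single point at which I would invoke Lemma~\ref{lm:noreturn} is this: since $s$ crosses $b$ on leaving $t$ and then crosses $a$ on returning to $t$, we must have $a\neq b$. Both $a$ and $b$ are diagonals rather than boundary edges of $P$ --- a strand terminates on reaching a boundary edge, but $s$ continues beyond $b$ and reaches $a$ from within $P$ --- so each borders exactly one further tile; let $u$, $w$ be the tiles meeting $t$ across $b$, $a$ respectively. Since two distinct tiles of $T$ share at most one edge and $a\neq b$, the tiles $u$ and $w$ are distinct.

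I would finish using Lemma~\ref{lm:tree}. The arc $p$ runs through a sequence of tiles in which consecutive ones share the edge that $s$ crosses in passing from one to the next, so $p$ traces a walk in the dual graph $\gamma(T)$ from $u$ to $w$ avoiding the vertex $t$. But $\gamma(T)$ is a tree, and $u$, $w$ are distinct neighbours of $t$, so the unique path in $\gamma(T)$ from $u$ to $w$ passes through $t$, and hence so does every $u$-to-$w$ walk; this contradicts the existence of $p$. Therefore no tile carries two strand segments of $s$.

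The last step is where I expect the real content to be. For a triangular tile the cruder count suffices --- $s$ crosses at most three edges of $t$ and, by Lemma~\ref{lm:noreturn}, each at most once, which already forbids $s$ from leaving and re-entering $t$ --- but an $r$-gonal tile with $r\ge 4$ can carry two strand segments having no edge in common, so that count no longer closes the argument, and one genuinely needs the planarity of $T$, encoded in $\gamma(T)$ being a tree, to rule out $s$ leaving $t$ on one side and returning on another.
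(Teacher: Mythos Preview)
Your argument is correct. The contradiction via the tree structure of $\gamma(T)$ is sound: once you know $a\neq b$ (from the statement of Lemma~\ref{lm:noreturn}) and hence that $u$ and $w$ are distinct neighbours of $t$, the unique-path property of trees forces any walk from $u$ to $w$ to pass through $t$, contradicting that $p$ avoids $t$. One small point: the claim that two tiles share at most one edge is itself a consequence of $\gamma(T)$ being a tree (no multi-edges), so you are already implicitly using Lemma~\ref{lm:tree} at that step as well.

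The paper's route is shorter but less self-contained: it declares the corollary ``immediate'' from Lemma~\ref{lm:noreturn}, which is justified because the \emph{proof} of that lemma actually establishes the stronger conclusion that after leaving $t$ through $e$ the strand terminates inside the branch $\gamma_{t',e'}$ and so never re-enters $t$ at all, not merely that it never recrosses $e$. Your version works purely from the stated lemma, and then has to redo the tree argument explicitly --- which is exactly what you noticed in your final paragraph. So the approaches are the same idea at different levels of packaging. (The paper also gives a second, genuinely different proof in \S\ref{sec:2nd-machinery} via the partitions $I_i(Q)$, $J_i(Q)$ of the boundary vertices induced by a tile $Q$.)
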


\begin{lm} \label{lm:pf22}
Let $T\in A_n$ be a tiling of 
an $n$-gon. Then the strands of 
$\Scottt(T)$ have the following properties \cite[\S 14]{Postnikov}: $\ $
(i) Crossings are transversal and the strands crossing a given 
strand alternate in direction.  
(ii) If two strands cross twice, they form an oriented digon. 
(iii) No strand crosses itself. 
\end{lm}

\begin{proof}
The first two properties follow from the construction. That no strand crosses 
itself follows from Corollary~\ref{cor:only-one-strand}. 
Note that the underlying polygon can be drawn convex, in which case
strands are left-turning.
The requirement  that there are no unoriented lenses 
follows from the fact that strands
are left-turning in this sense.  
(Remark: our main construction is unaffected  by non-convexity-preserving
ambient isotopies, but the left-turning property is only preserved
under convexity preserving maps.)
\end{proof} 


\medskip



We write $x \leadsto y$ for a strand starting at vertex $x$ and ending
at vertex $y$. Thus if  $x \leadsto y$ is a strand of tiling $T$ and 
$\sigma$ is $\Scott(T)$ then this strand determines $\sigma(x)=y$.

If a list of vertices is ordered minimally clockwise around the polygon, we will often just 
say clockwise, for example $(7,1,2)$ is ordered minimally clockwise. 
To emphasise that vertices $x_1,x_2,x_3$ 
are ordered minimally clockwise, 
we will repeat the ``smallest'' element at the end: 
$
x_1<x_2<x_3<x_1.
$ 

\begin{defn}
Let $q$ be a vertex of a polygon with strand diagram. \\
(1) We say that a strand $x\leadsto y$ {\em covers $q$} if 
we have $x<q<y<x$ minimally clockwise. \\
(2) We say that $x\leadsto y$ {\em covers} strand $x'\leadsto y'$ if 
$x<x'<y'<y<x$ or $x<y'<x'<y<x$. 
\end{defn}


\subsection{Factorisation Lemma}


$\quad$

Consider the two strands $s_1,s_2$ passing through an edge $e$ of a tile $t$.
We say these strands are `antiparallel at $e$'; and consider the 
`parallel' strand $s_1$ and antistrand
$\overline{s_2}$ both moving into
$t$ from $e$. 
Examples:

\begin{center}
\includegraphics[width=4cm]{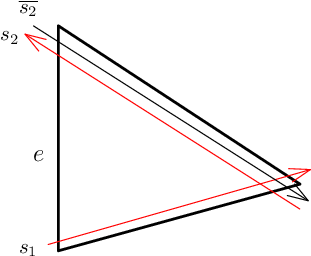}
\hskip 1cm
\includegraphics[width=4cm]{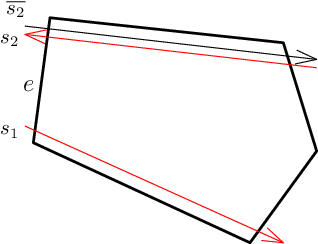}
\end{center}

\begin{lm} 
[`Lensing Lemma']
\label{lm:lensing}
(I)
Let strand segments $s_1$ and $s_2$ be antiparallel at an edge $e$ of a tile $t$
in a polygon tiling $T$. 
Traversing the two segments in the direction 
from $e$ into the tile $t$, they do one of the following: 
(a) if $t$ is a triangle the segments cross in $t$ and do not meet again;
(b) if $t$ is a quadrilateral the segments leave $t$ antiparallel in the opposite
edge; 
(c) if $|t| >4$ they leave $t$ in different edges and the strands do not cross
thereafter.  
\\
(II)
In any polygon tiling $T$, two strands cross at most twice. If two
strands cross twice then (i) they pass through a common edge $e$;
(ii) the crossings occur in triangles, on either side of $e$,
with only quadrilaterals between.
\end{lm}

\begin{proof}
(I) See the figure. 
Note that in cases (a) and (c) the strands pass out
of $t$ through different edges and hence into different subpolygons. 
Now use Lemma~\ref{lm:noreturn}. 
(II) Every crossing has to occur in a tile. If two strands enter a tile 
across different edges, they have not crossed before entering into the tile 
(Lemma~\ref{lm:tree}). The claim then follows from (I). 
\end{proof}



\begin{lm}\label{lm:permutations-not}\label{lm:baur-magic}
Let $T$ be a tiling of an $n$-gon and $\sigma=\Scott(T)$. 
Then (a) $\sigma$ 
has no fixed points and (b) there is no $i$ with $\sigma(i)=i+1$. 
\end{lm}

\begin{proof} (a) Follows from the left-turning property
(cf. Proof of Lemma~\ref{lm:pf22}). 
(b) 
Consider the tile with edge $e=[i,i+1]$.
The strand starting at $i$ and the strand ending 
at $i+1$ have segments in the same tile and hence differ by Corollary 
\ref{cor:only-one-strand}.  
\end{proof}



\begin{lm}
[`Factorisation Lemma']
\label{lm:glue-tiles}
Let $P$ be a polygon and $T_1,T_2$ two tilings of $P$. Assume that there exists 
a diagonal $e=[i,j]$ in
$T_1$  and $T_2$. 
Denote by $P'$ the polygon on vertices $\{i,i+1,\dots, j-1,j\}$.
We have: \\
$ \Scott(T_1) = \Scott(T_2)
\Longrightarrow
\Scott(T_1|_{P'}) = \Scott(T_2|_{P'})
$.
\end{lm}

\begin{proof}
Consider Figure~\ref{fig:gluing}. 
The only way a strand of $\Scottt(T_1)$ passes out of $P'$ 
is through $e$, and there is exactly one such strand (and one passing in). 
This strand is non-returning by Lemma~\ref{lm:noreturn}, 
so 
its endpoints are identifiable from $\sigma=\Scott(T_1)$ as the unique
vertex pair $k,l$ with $\sigma k = l$ and with $k$ in $P'$ and $l$ not. 
Apart from this and the corresponding `incoming' pair with 
$\sigma k' = l'$,
all other strand endpoint pairs of 
$\Scottt(T_1|_{P'})$ are as in $\Scott(T_1)$
and hence agree with $\Scottt(T_2|_{P'})$
if $\Scott(T_1)=\Scott(T_2)$. 
Indeed, if $\Scott(T_1)=\Scott(T_2)$ then $\Scott(T_2)$ identifies the same
two pairs $k,l$ and $k',l'$. 
At this point it is enough to show that the image of vertex $k$ under
$\Scottt(T_1|_{P'})$, which is either vertex $i$ or $j$, is determined by
$\sigma$
(since the determination will then be the same for $\Scottt(T_2|_{P'})(k)$). 
If $k \geq l'$ then the strands $k \leadsto l$ and $k' \leadsto l'$ 
cross over each other in $P'$, 
since otherwise they have the wrong orientation at $e$ 
--- see Fig.\ref{fig:gluing}. 
Thus the image of $k$ is determined (it is $i$). 
If $k < l'$ then the image is determined similarly 
(this time non-crossing is forced and the image is $j$). 
\end{proof}

\begin{figure}
\includegraphics[width=10cm]{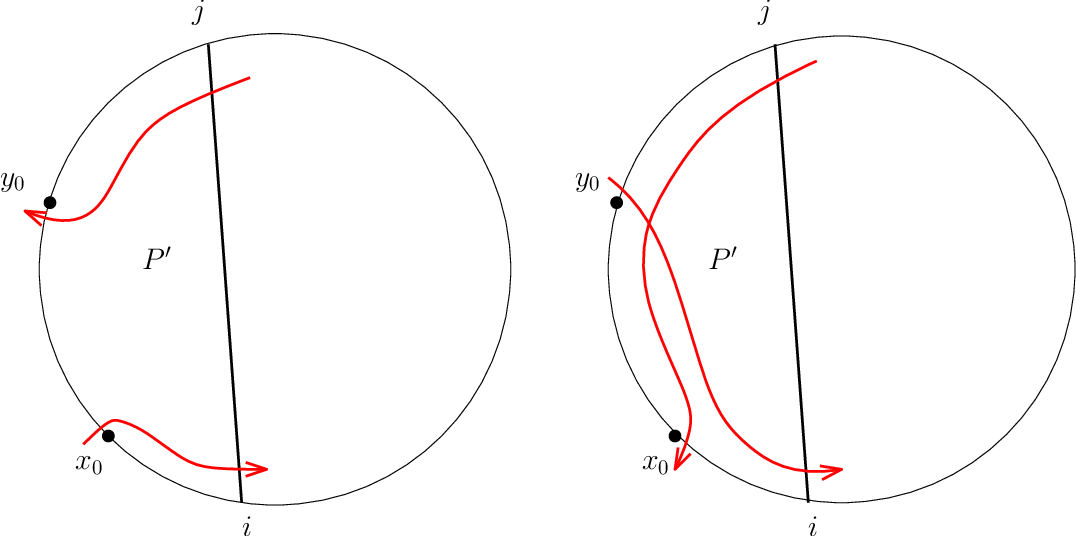}
\caption{Schematic for 
two strands passing through diagonal $e=[i,j]$}\label{fig:gluing}
\end{figure}

\subsection{Properties of strands and tiles} 


$\quad$

We will say that a vertex in polygon $P$ is {\em simple} in tiling $T$ if it
is not the endpoint of a diagonal. 
We will say that an edge $e=[i,i+1]$
of $P$ is a {\em simple edge} in $T$ if both vertices are
simple. 

\begin{lm}\label{lm:simple-edge-rotation}
A strand $i+1 \leadsto i$ arises in $\Scott(T)$ if and
only if the edge $[i,i+1]$ is simple. 
\end{lm}
\begin{proof}
If $[i,i+1]$ is simple in $T$, the claim follows by construction. 
If $i$ is not simple, then the strand ending at $i$ contains 
the strand segment of the diagonal $[j,i]$ of $T$ with $j<i-1$ maximal clockwise 
and has starting point in $\{j,j+1,\dots, i-2\}$. 
Similarly, if $i+1$ is not simple, the strand starting at $i+1$ 
contains the strand segment of the diagonal $[i+1,k]$ with $k>i+2$ minimal 
anticlockwise. Its ending point is among 
$\{i+3,\dots, k\}$. 
\end{proof} 



\begin{figure}
\includegraphics[width=4.2cm]{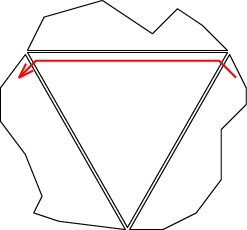}
\includegraphics[width=4.2cm]{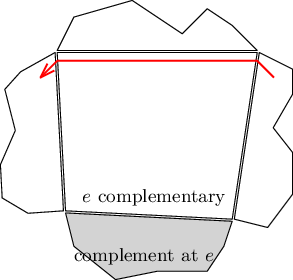}
\includegraphics[width=4.6cm]{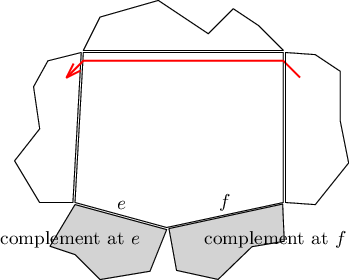}
\caption{Tiles with complementary edges and their complements
\label{fig:complement3-5}}
\end{figure}

In general a strand passes through a sequence of tiles. At each such
tile it is parallel to one edge and passes through the two adjacent
edges. Any remaining edges in the tile 
are called {\em complementary} to the
strand. Each of these complementary edges defines a sub-tiling --- the
tiling of the part of $P$ on the other side of the edge. We call this 
the {\em complement to the corresponding edge}. 
Note that the strand covers every vertex 
in this sub-tiling.
See for example Figure~\ref{fig:complement3-5}. 
We deduce:

\begin{lm} \label{lm:strand-tile-magic}
(I) A strand $i \leadsto i+2$ passes only through
  triangles. \\
(II) A strand $i \leadsto i+3$ passes through one quadrilateral
(with empty complement) and
  otherwise triangles. \\
(III) A strand $i \leadsto i+4$ passes through one quadrilateral
(with a complementary triangle)
or two quadrilaterals or
  one pentagon (with empty complement), and otherwise triangles. \\
(IV) A strand $i \leadsto i+k$ passes through a 
tile
sequence $Q_i$ such
  that 
$$
k-2 \geq \sum_i \left(|Q_i| -3\right)
$$
(the non-saturation of the bound corresponds to some tiles having
non-empty complement).
\qed
\end{lm}

\begin{ex}\label{ex:complements}
As an illustration for Lemma~\ref{lm:strand-tile-magic} consider Figure~\ref{fig:p14}. 
Both tilings have a strand $6\leadsto  9$, illustrating the case $k=3$. \\
In the tiling on the left, there is a strand $13\leadsto 3$, it passes through one 
quadrilateral with complementary triangle $\{14,1,2\}$. 
\end{ex}



\begin{lm} \label{lm:tear}
Let $T \in A_n$ and $\sigma=\Scott(T)$. 
Then $\sigma (i) = i+2$ if and only if there exists $T' \in \te{T}$ 
with a 3-ear at vertex $i+1$. 
\end{lm}
\begin{proof}
If: $ T'$  has a strand direct from $i$ to $i+2$ in the given ear.
Note that 
all other tilings in $\te{T'}$ have only triangles incident at $i+1$
(since a neighbourhood of $i+1$ lies in the triangulated part). One
sees from the construction that these tilings all
have a strand from $i$ to $i+2$.
See (a):
\[ 
(a)
\raisebox{.237in}{\includegraphics[width=1in]{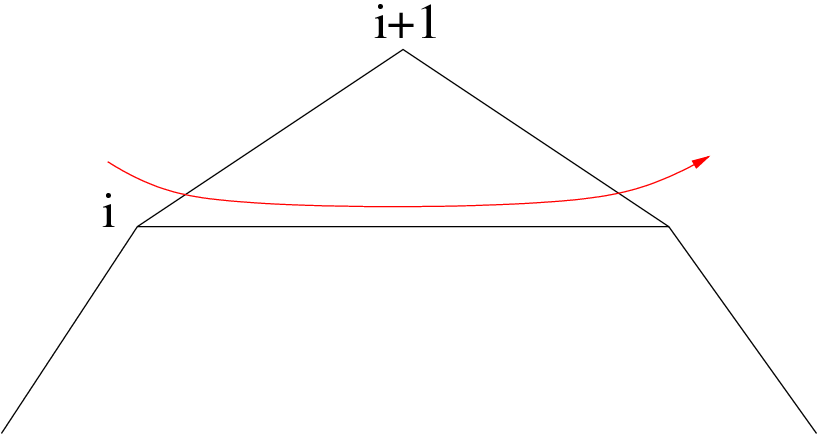}}
\quad
\includegraphics[width=1in]{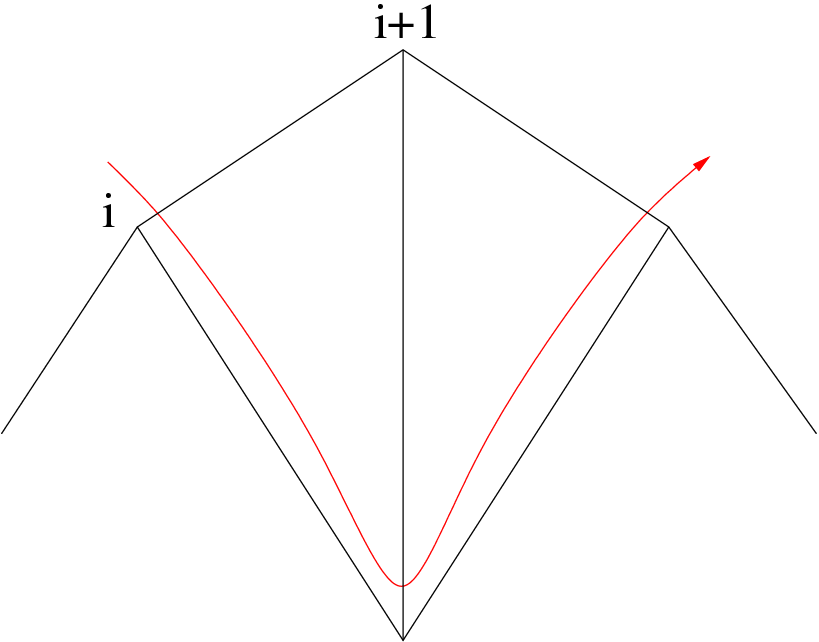}
\quad
\includegraphics[width=1in]{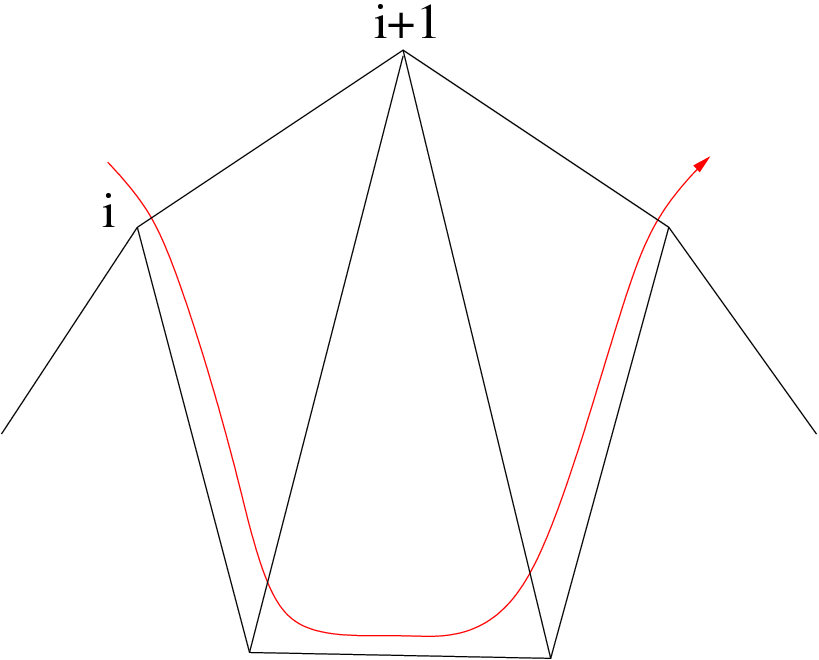}
\hspace{.74in}
(b)
\raisebox{-.112in}{\includegraphics[width=1in]{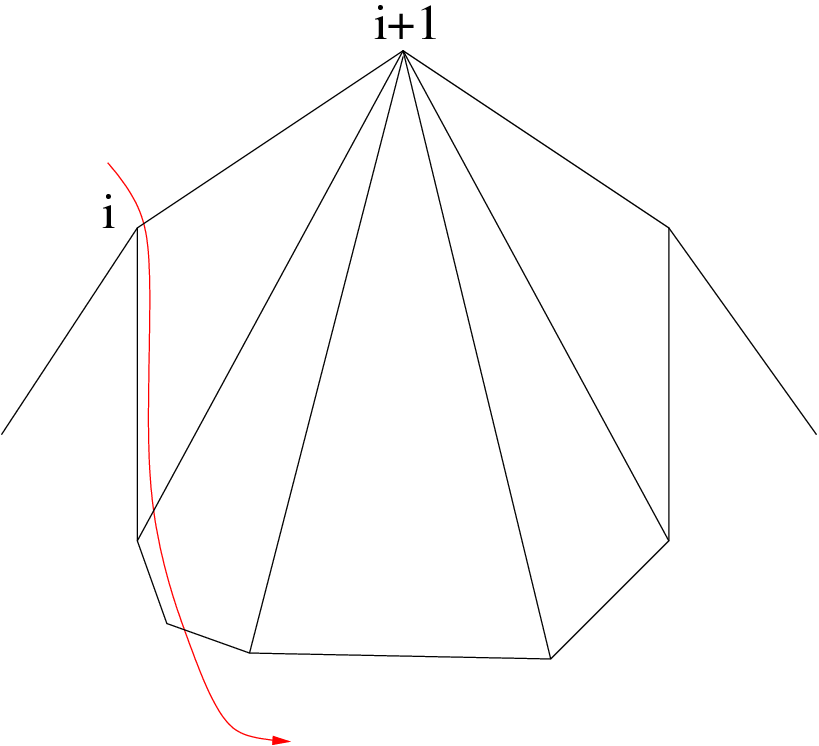}}
\]
Only if: If there is no such $T'$ in $\te{T}$ then among the tiles incident at
$i+2$ is one with order $r>3$. The strand from $i$ passes into $P$ at the
first tile incident at $i+1$. 
If this is a triangle then the strand
passes into the second tile, and so on. Thus eventually 
the strand meets a tile of higher order --- see (b) above. 
But then by Lemma~\ref{lm:strand-tile-magic} 
we have $i \leadsto i+k$ with $k>2$. 
\end{proof}



For given $n$ let us write $\ttt$ for the basic cycle element in $\Sym_n$:
$\ttt=(1,2,...,n)$.
The following is implicit in \cite{scott},
and is a corollary to Lemma~\ref{lm:strand-tile-magic}. 
\begin{lm}\label{lm:triangle-case}
Let $T$ be an arbitrary triangulation of an $n$-gon. Then the permutation 
$\Scott(T)$  associated 
to $T$ is induced by $i\mapsto i+2$, $1\le i\le n$ (reducing mod
$n$). 
Indeed, for $T \in A_n$, 
$\Scott(T) = \ttt^2$ if and only if $T$ is a triangulation.
\end{lm}



\begin{defn}
A {\em run} is a subsequence of form $i-1,i-2,...,i-r+1$ in a cycle of a 
permutation of $S_n$. 
A maximal subsequence of this form is an {\em $r$-run at $i$}.
\end{defn}

In Figure~\ref{fig:p14}, both permutations have a $3$-run at $9$. 

\begin{lm} \label{lm:bigear}
Let $T\in A_n$, and $\sigma=\Scott(T)$. We have \\
i) 
$\sigma$ contains a cycle of length $\ge r$, where $r\ge 2$, 
with 
an $r$-run at $j$
$\Longleftrightarrow$ 
$[j-1,j-2]$, $[j-2,j-3], \dots, [j-r+2,j-r+1]$ 
is a maximal sequence of simple edges in $T$; 
\\ ii) 
Assume $\Scott(T)$ is as in (i) and $r<n-1$. 
Then TFAE \\
(a) $[j-r,j]\in T$; 
(b) $\{j-r,j-r+1,\dots, j\}$ is an $(r+1)$-ear in $T$; 
(c) $\sigma(j-r)=j$. 
\end{lm}

Note that the case $r=2$ occurs if $j-1$ is simple, while the edge $[j-1,j-2]$ is not simple - a triangular ear. 

\begin{proof}

i) The implication $\Longleftarrow$ follows by construction. Implication $\Longrightarrow$ follows 
from Lemma~\ref{lm:simple-edge-rotation}. 

ii) Observe that the the assumptions in ii) are consistent with (b). \\
(a) $\Longrightarrow$ (b) follows with i). (b) $\Longrightarrow$ (c) follows from the 
construction. \\
To show (c) $\Longrightarrow$ (a) first note that by the assumptions, $j$ and $j+r$ are 
not simple. Among the diagonals incident with $j$ consider the diagonal $[j,q_1]$ with 
endpoint $q_1$ maximal (and clockwise) from $j$. 
Among the diagonals incident with $j-r$ consider 
the diagonal $[q_2,j-r]$ with $q_2$ minimal (and anticlockwise) from $j-r$. 
If $q_1=j-r$ (and 
hence $q_2=j$), we are done. 
So assume for contradiction that $j<q_1\le g_2<j-r<j$. Both diagonals are 
edges of a common tile $Q$ containing the simple edges 
$[j-1,j-2]$, $[j-2,j-3], \dots, [j-r+2,j-r+1]$. 
Consider 
the strand starting at $j-r$. It leaves the tile $Q$ in $\{q_1+1,\dots, q_2\}$. 
By Corollary~\ref{cor:only-one-strand} 
it cannot return back into $Q$, and so its 
endpoint is different from $j$. 
\end{proof}


\section{
Inductive proof of Theorem}\label{sec:pauls-sec-0}
\medskip

One proof strategy for the 
main theorem (Theorem~\ref{th:main}) is as follows.
We assume the theorem is true for \rank s $m<n$ (the induction base is clear).

The `If' part follows from the Factorisation Lemma (Lemma~\ref{lm:glue-tiles}) and 
Lemma~\ref{lm:triangle-case}.

For the `Only if' part proceed as follows. 
Consider $T_1,T_2$ with $\sigma=\Scott(T_1) = \Scott(T_2)$.
Note that $T_1$ has an ear, either triangular or
bigger (Lemma~\ref{lm:graphear}). Pick such an ear $E$. 
Consider the cases (i) $|E|=3$; (ii) $|E|\neq 3$.
\\
(i) If $E $ is triangular in $T_1$ then $\sigma=\Scott(T_1)$ has 
$i \leadsto i+2$ at the corresponding position.
Thus so does $\Scott(T_2)=\Scott(T_1)$, and hence there is a $T_2'$ in
$\te{T_2}$ also with this ear, by Lemma~\ref{lm:tear}. 
Note that $\Scott(T_2')=\Scott(T_2)$ 
since $T_1\sim_{\triangle} T_2$. 

Since $T_1\setminus E$ and $T_2'\setminus E$ are well defined we have
$\Scott(T_1\setminus E) = \Scott(T_2'\setminus E)$
by the Factorisation Lemma (Lemma~\ref{lm:glue-tiles}). 
That is, the Scott permutations $\Scott(T_1)$ and 
$\Scott(T_2')$
of $T_1$ and $T_2'$ agree on the part
excluding this triangle. 
But then $\te{(T_1\setminus E)} = \te{(T_2'\setminus E)}  $
(i.e. the restricted tilings agree up to triangulation) by the 
inductive assumption. 
Adding the triangle back in we have $\te{T_1} = \te{T_2'}$.
But $\te{T_2'} = \te{T_2}$ and we are done for this case.
\\
(ii)
If ear $E$ is not triangular in $T_1$ then 
$T_2$ has an ear in the same position by Lemma~\ref{lm:bigear}.
The argument is a direct simplification of that in (i), considering
$T_1 \setminus E$ and $T_2 \setminus E$.
\qed



\section{Geometric properties of tiles and strands} \label{sec:2nd-machinery}
 

\begin{defn}
Fix $n$. Then an increasing subset $Q=\{ q_1, q_2, ...,q_r \} $  of 
$\{ 1,2,...,n \}$ defines two partitions:
\[
I(Q)=\{ [q_1,\dots, q_2), [q_2,\dots, q_3),\dots, [q_r,\dots, q_1) \}
\]
\[
J(Q) = \{ (q_1,\dots, q_2], (q_2,\dots, q_3],\dots, (q_r,\dots, q_1] \}
\]
We denote the parts by 
$I_i(Q):=[q_i,\dots, q_{i+1})$ and 
$J_i(Q):=(q_i,\dots, q_{i+1}]$, for $i=1,\dots, r$. 
\end{defn}

Such partitions arise from tilings: Let $Q\in A_n$. Then the 
vertices of $Q$ partition the vertices of $P$ in two ways. 
Consider the edge $e=[q_i,q_{i+1}]$ of $Q$. In the complement 
to $e$, there are $q_{i+1}-q_i$ strands starting at vertices 
in $I_i(Q)$ and the same number of strands ending at the 
vertices in $J_i(Q)$. Among them, $q_{i+1}-q_i-1$ remain in the 
complement. For an example, see Figure~\ref{fig:tile-strand}.

Using this notation, we get an alternative proof for Corollary~\ref{cor:only-one-strand} 
stating that a strand of a tiling can only use one strand segment of a given tile: 
Let $Q$ be a tile of a tiling $T\in A_n$
and let $q_1,\dots, q_r$ be its vertices, 
$r\ge 3$, $q_1<q_2<\cdots q_r<q_1$.  
Assume strand 
$x\leadsto y$ involves a strand segment of $Q$, say parallel to the 
edge $[q_{i-1},q_i]$.
By construction, this strand segment 
is oriented from $q_i$ to $q_{i-1}$. 
We claim that the strand then 
necessarily starts in $I_i(Q)$ and ends in $J_{i-2}(Q)$. 
Consider the edge $[q_{i-2},q_{i-1}]$ of $Q$: the only place for a strand to leave $J_{i-2}(Q)$ 
is near the vertex $q_{i-2}$. By the orientation of strand segments in tiles, 
strand $x\leadsto y$ could only leave near $q_{i-2}$ if $q_{i-2}=q_{i-1}$ --- a contradiction. 
Hence $y\in J_{i-2}(Q)$. 
A similar argument shows $x\in I_i(Q)$.

\begin{remark}\label{rm:long-vs-short}
Let $T$ be a tiling of $P$, with tile $Q$ inducing partitions as above. There 
are two types of strands regarding these partitions. 
Let $x\leadsto y$ be a strand starting in $I_{i_1}(Q)$ and ending in $J_{i_2}(Q)$ 
for some $i_1,i_2$. Then we either have $i_1=i_2$ or $i_1=i_2+2$ 
(by the preceding argument or by Corollary~\ref{cor:only-one-strand}). 
The case 
$i_1=i_2+2$ 
is illustrated in Figure~\ref{fig:tile-strand} for $Q$ a pentagon. 
\end{remark}

\begin{figure}[h]
\includegraphics[scale=.65]{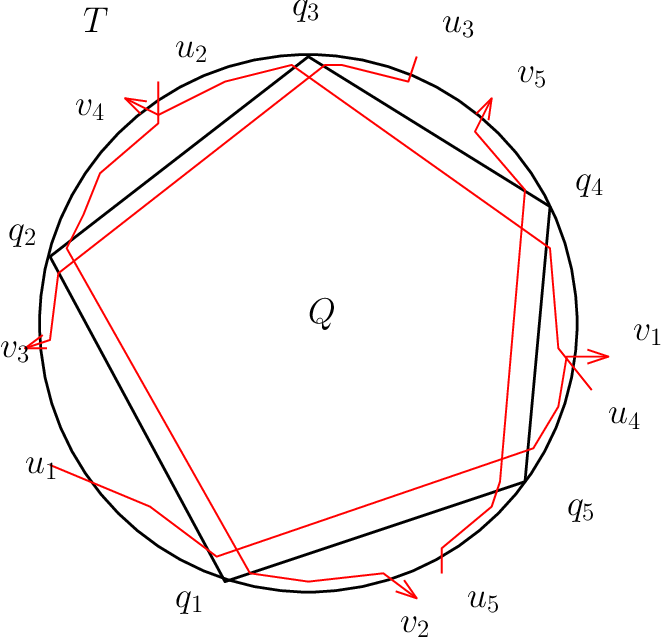} 
\caption{Tile inducing partition and long strands for $Q$}\label{fig:tile-strand}
\end{figure}

\begin{defn} \label{de:ls}
Let $Q$ be a tile of a tiling $T$ of an $n$-gon. If a strand 
$x\leadsto y$ of $T$ uses 
a strand segment of $Q$, we say that the strand $x\leadsto y$ is a 
{\em long strand for $Q$}. 
If for the partitions induced by $Q$, $x\in I_i$, then $y\in J_{i-2}$ if $x\leadsto y$ 
is a long strand for $Q$ 
and $y\in J_i(Q)$ otherwise, 
cf. Remark~\ref{rm:long-vs-short}. 
\end{defn}

\begin{lm}\label{lm:covering}
Let $Q$ be an $r$-tile of a tiling of $P$ with vertices $q_1<\dots< q_r<q_1$ clockwise. 
Then every long strand 
$x\leadsto y$ with respect to $Q$ covers
exactly $r-2$ vertices of $Q$ 
and there are exactly 
two vertices $q_{i-1},q_i$ for every such strand with $y\le q_{i-1}<q_i\le x<y$ 
(clockwise). 
\end{lm}

\begin{proof}
If $s$ is a long strand for $Q$ with $x\leadsto y$, then there exists $i$ such that 
$x\in I_i(Q)=[q_i,\dots, q_{i+1})$ and $y\in J_{i-2}(Q)=(q_{i-2},\dots, q_{i-1}]$ 
(reducing the index mod $n$), hence it covers $q_{i+1},q_{i+2}, \dots, q_{i-2}$. 
For an illustration, see Figure~\ref{fig:tile-strand}. 
\end{proof}


\section{Geometric Proof of Theorem}
   \label{sec:subdiv-to-permut}


We now use geometric properties of tilings to prove the ``only if'' 
part of 
Theorem~\ref{thm:tilings-permut-inj}.
The maximum tile size of tiling $T$ is denoted $r(T)$.
For two tilings $T_1,T_2$ and $r_i = r(T_i)$,
the case 
$r_1\ne r_2$ is covered in Lemma~\ref{lm:maximum-agrees};
and
$r:=r_1=r_2$
follows from
Corollary~\ref{cor:size5} and Lemma~\ref{lm:size4}. 
%
%
We first prove an auxiliary result. 

\begin{lm}\label{lm:hilfslemma}
(a) Consider a tiling $T$ in $A_n$ with a diagonal  $e=[s_1,s_2]$.
For each vertex $q$ with  $s_1 < q < s_2<s_1$,
we get a strand  $s:y\leadsto z$ in $\Scott(T)$ covering $q$, with 
$s_1\le y<q<z\le s_2<s_1$.
\\
(b) Consider $T,q,s$ as in (a) and a further tiling $T'$ of $P$ 
containing a tile $Q$ such that $\dim(Q \cap e)=1$ and $q \in Q$. 
If  $\Scott(T')$ contains a strand with $y\leadsto z$ as in (a), 
it is a long strand for $Q$ (as defined in Definition~\ref{de:ls}). 
\end{lm}

\begin{proof}
(a)
Let $e'=[n_1,n_2]$ be the shortest diagonal 
in $T$ lying above $q$. 
Note, $s_1\le n_1<q<n_2\le s_2<s_1$. 
Consider the strand segment in $\Scott(T)$
following $e'$ from $n_1$ to $n_2$ 
(see figure below).
This induces a strand $s$ with $y\leadsto z$, say. 
We claim $n_1\le y<q$ and $q<z\le n_2$. To see this let 
$[x,n_1]$ be in $T$ with $n_1\le x$ and $x\le q$ maximal ($x=n_1+1$ possibly). 
Then $s$
has its starting point among $\{n_1,n_1+1,\dots, x-1\}$,
because $e'$ is the shortest diagonal above $q$. 
Let $[y,n_2]$ be in $T$  with $y\le n_2$ and $y\ge q$ minimal. 
Then $s$ has its endpoint among $\{y+1,\dots, n_2\}$, similarly. 
$$
\includegraphics[scale=.5]{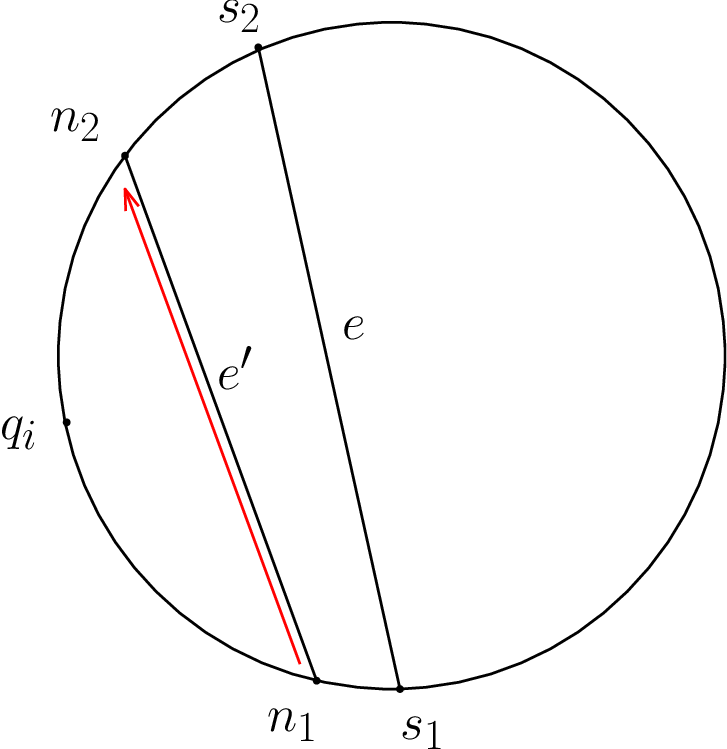}
$$
(b) Given (a), this follows immediately from Definition~\ref{de:ls}. 
\end{proof}



\begin{lm}\label{lm:maximum-agrees}
Let $T_1$ and $T_2$ be two tilings of a polygon $P$ with 
$\Scott(T_1) = \Scott(T_2)$.
Then $r_1 = r_2$. 
\end{lm}

\begin{proof}
Let $r=r_2$. 
In case $r_1=3$, the claim follows from
Lemma~\ref{lm:triangle-case}: in this case, 
$\Scott(T_1)$ is induced by $i\mapsto i+2$ and $T_2$ has to be 
a triangulation, too.  
Assume that  $\Scott(T_1)=\Scott(T_2)$ and for contradiction, 
assume that $r_1<r$. (Remark: By the above, we can assume $r>4$.) 
We consider a tile $Q$ of size $r$ in $T_2$, with vertices $q_1,\dots, q_r$. 
In $T_1$, we choose a tile $S$ 
with  $\dim Q\cap S> 1$. 
We write $\{ s_1, ...,s_s \}$ for the vertices of $S$, with  $|S|=s<r$. 

By this construction there is an edge $e=[s_1,s_2]$ of $S$ and $q_i \in Q$
with $s_1<q_i<s_2<s_1$.  
There is, therefore,  a strand with $y\leadsto z$ 
in $\Scott(T_1)$ covering $q_i$, as in Lemma~\ref{lm:hilfslemma}. 

Since  $\Scott(T_1)=\Scott(T_2)$,
under the tiling $T_2$ there exists a strand with $y\leadsto z$ covering $q_i$. 
By Lemma~\ref{lm:hilfslemma}(b) it is a long strand for $Q$. 
Consider the subpolygon $P'$ on the vertices $y,y+1,\dots, z$,
i.e. with interior edge $e'=[y,z]$.  
We have shown in Lemma~\ref{lm:covering} that $r-2$ vertices of $Q$ lie in $P'$, w.l.o.g. these 
are $\{q_1,\dots, q_{r-2}\}$. 
Furthermore these $r-2$ vertices are 
different from $y$, $z$ (again Lemma~\ref{lm:covering}). 
Thus, $P'$ has at least $r$ vertices. 

We consider the diagonals in $T_1$ with endpoints among 
$y,y+1,\dots,z$, covering at least one of $\{q_1,\dots, q_{r-2}\}$. 
Not all of these diagonals cover all $r-2$ vertices 
(otherwise we would get a tile of size $r$ in $T_1$,
specifically $\{ a,q_1,\dots ,q_{r-2},b \}$ where $[a,b]$ 
is the shortest covering diagonal). 
So let  $e''=[n_1,n_2]$ with $y\le n_1<n_2\le z<y$ 
cover $q_{l_1}$, say, but not $q_{l_2}$. 
By the same argument as above, this implies that there exists 
a strand with $y_2\leadsto z_2$ in $\Scott(T_1)$, 
with $n_1\le y_2 <q_{l_1}<z_2\le n_2<n_1$. 
We are in the situation here:
$$
\includegraphics[scale=.5]{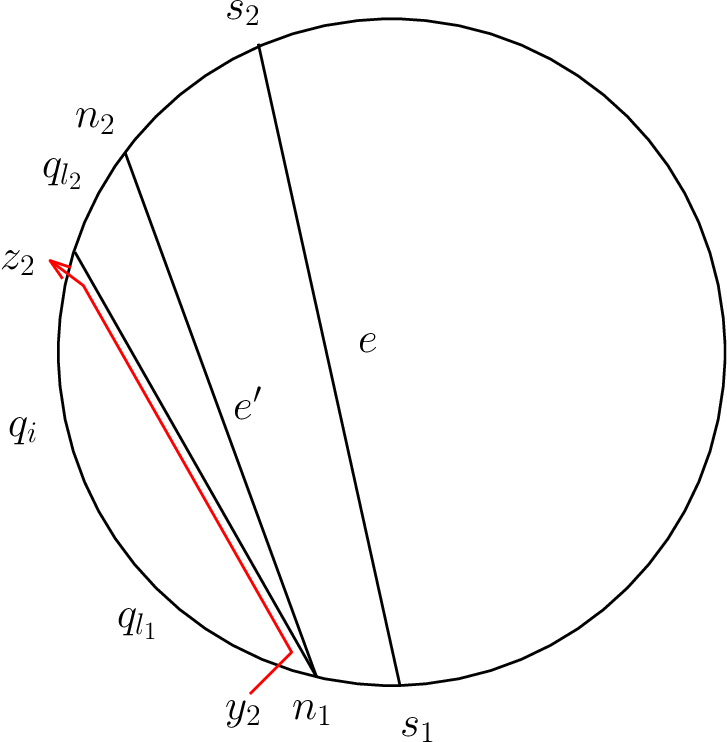}
$$
In $\Scott(T_2)$, 
the assumed strand from $y_2$ to $z_2$ is a long strand for $Q$
again by Lemma~\ref{lm:hilfslemma}. 
As before, this implies that among the vertices 
$n_1+1,n_1+2,\dots,n_2-1$, there are $r-2$ vertices of $Q$. But we already 
have $q_{r-1},q_r$ and $q_{l_2}$ lying outside. 
A contradiction. 
\end{proof}

\begin{cor} \label{cor:size5}
Let $\Scott(T_1)=\Scott(T_2)$. If in $T_1$ there exists an edge 
$e=[s_1,s_2]$ and in $Q$ a tile of size $\ge 4$ 
with $\dim Q\cap e=1$, 
then either $e$ is an edge of $Q$ 
or $e$ separates vertices of $Q$ 
($s_1 > q_x > s_2 > q_y > s_1$ for some $x,y$)
and $|Q|=4$.
\end{cor}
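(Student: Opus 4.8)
The plan is to prove Corollary~\ref{cor:size5} as a refinement of the argument in Lemma~\ref{lm:maximum-agrees}, reusing its machinery almost verbatim. Suppose $\Scott(T_1)=\Scott(T_2)$, that $Q$ is a tile of $T_2$ with $|Q|\ge 4$, and that $e=[s_1,s_2]$ is an edge of $T_1$ with $\dim(Q\cap e)=1$, so that $e$ shares at least an edge's worth of boundary with $Q$. Suppose for contradiction that $e$ is not an edge of $Q$ and that we are \emph{not} in the excluded case ($|Q|=4$ with $e$ separating exactly one vertex of $Q$ on each side). There are then two cases to rule out: (A) $e$ lies strictly inside $Q$ in the sense that both endpoints $s_1,s_2$ lie among the vertices of $Q$ but $[s_1,s_2]$ is a diagonal of $Q$ (only possible if $|Q|\ge 4$); (B) $e$ separates vertices of $Q$ on both sides but $|Q|\ge 5$, i.e. there is at least one vertex $q_i$ of $Q$ with $s_1<q_i<s_2<s_1$ together with at least two further vertices of $Q$ outside the arc $(s_1,s_2)$.

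First I would dispose of case (A): if $s_1,s_2$ are both vertices of $Q$ but $[s_1,s_2]$ is not an edge of $Q$, then since $e=[s_1,s_2]$ is an edge of a tile $S$ in $T_1$, the diagonal $e$ cuts $Q$ into two pieces each containing at least one vertex of $Q$ in its interior; pick such a $q_i$ with $s_1<q_i<s_2<s_1$ (relabelling $s_1,s_2$ if necessary). Then case (A) reduces to case (B)-style reasoning, so it suffices to handle the situation where there is a $q_i$ strictly covered by $e$ and at least two vertices of $Q$ lying outside the arc. Now I invoke Lemma~\ref{lm:hilfslemma}(a): from the diagonal $e=[s_1,s_2]$ of $T_1$ and the vertex $q_i$ we obtain a strand $y\leadsto z$ of $\Scott(T_1)$ with $s_1\le y<q_i<z\le s_2<s_1$. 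Since $\Scott(T_1)=\Scott(T_2)$ this same strand $y\leadsto z$ appears in $\Scott(T_2)$, and by Lemma~\ref{lm:hilfslemma}(b) — whose hypotheses ($\dim(Q\cap e)=1$ and $q_i\in Q$) are exactly what we have — it is a long strand for $Q$. By Lemma~\ref{lm:covering} this long strand covers exactly $|Q|-2$ vertices of $Q$, all of them distinct from $y$ and $z$, so the subpolygon $P'$ on vertices $y,y+1,\dots,z$ contains $|Q|-2$ vertices of $Q$ in its interior, hence the two remaining vertices of $Q$ lie outside $P'$.

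The contradiction then comes from counting: in case (B) we assumed at least two vertices of $Q$ lie outside the arc $(s_1,s_2)$, and since $y\le s_1$ and $z\ge s_2$... here I need to be slightly careful, because a priori $y$ could be smaller than $s_1$, enlarging $P'$. The clean way is to follow Lemma~\ref{lm:maximum-agrees} exactly: having produced the long strand for $Q$ with $|Q|-2$ of its vertices inside $P'$, note $P'$ has at least $|Q|$ vertices; then look at diagonals of $T_1$ with endpoints in $\{y,\dots,z\}$ covering at least one of these $|Q|-2$ vertices of $Q$. If every such diagonal covered all $|Q|-2$ of them, the shortest one would, together with its two endpoints, form a tile of $T_1$ of size $\ge |Q|$; combined with Lemma~\ref{lm:maximum-agrees} (giving $r_1=r_2$) and the fact that $e$ is a \emph{diagonal} of $T_1$ strictly inside or crossing $Q$, one checks this forces $|Q|=4$ and $e$ to separate $Q$ into two singletons — the excluded case. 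Otherwise some diagonal $e''=[n_1,n_2]$ of $T_1$ covers one $q_{l_1}$ but not another $q_{l_2}$; applying Lemma~\ref{lm:hilfslemma} again to $e''$ and $q_{l_1}$ produces a strand $y_2\leadsto z_2$ which, transported to $\Scott(T_2)$, is again a long strand for $Q$, forcing $|Q|-2$ vertices of $Q$ into the strictly smaller interior $\{n_1+1,\dots,n_2-1\}$ — but $q_{l_2}$ and the (at least two) outside vertices are excluded, so fewer than $|Q|-2$ remain, a contradiction.

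I expect the main obstacle to be bookkeeping the cyclic order and the exact boundary cases cleanly: making precise the statement ``$e$ is an edge of $Q$, or $e$ separates $Q$ and $|Q|=4$'' requires carefully distinguishing when $y=s_1$, $z=s_2$ (so $P'$ is exactly the arc cut off by $e$) from when the strand overshoots, and checking that the overshoot case still yields the counting contradiction. The other delicate point is the ``every covering diagonal covers everything'' branch: one must verify that producing a tile of $T_1$ of size $\ge|Q|$ is compatible with $r_1=r_2$ only in the degenerate $|Q|=4$ configuration, which is precisely the conclusion we want to land on rather than a contradiction. Everything else is a direct re-run of Lemma~\ref{lm:maximum-agrees} with ``$r_1<r$'' replaced by the weaker structural hypothesis on $e$ versus $Q$.
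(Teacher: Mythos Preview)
Your approach starts correctly but then reruns the iterated machinery of Lemma~\ref{lm:maximum-agrees} where a single symmetric step suffices. The paper's proof is a direct counting argument: if $e$ is not an edge of $Q$, the hypothesis $\dim(Q\cap e)=1$ forces vertices $q_i,q_j$ of $Q$ on \emph{both} open arcs, $s_1<q_i<s_2<q_j<s_1$. Apply Lemma~\ref{lm:hilfslemma} and Lemma~\ref{lm:covering} once to $q_i$ to place $r-2$ vertices of $Q$ in the open arc $(s_1,s_2)$, and once more (symmetrically) to $q_j$ to place $r-2$ vertices of $Q$ in the disjoint open arc $(s_2,s_1)$. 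Hence $2(r-2)\le r$, i.e.\ $r\le 4$, and together with $r\ge 4$ this gives $|Q|=4$.

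Your one-sided iteration has two genuine gaps. First, the characterisation of case~(B) is wrong: ``$|Q|\ge 5$ and $e$ separates'' does not imply that at least two vertices of $Q$ lie outside $(s_1,s_2)$ --- there may be only one --- and then your final count ($q_{l_2}$ plus ``at least two outside'' excluded, leaving fewer than $r-2$) fails to yield a contradiction. Second, in the branch where every covering diagonal with endpoints in $\{y,\dots,z\}$ covers all $r-2$ vertices, you obtain a tile of $T_1$ of size $\ge|Q|$, but this does not force $|Q|=4$: nothing in the statement assumes $|Q|$ is the maximum tile size, so $r_1=r_2$ from Lemma~\ref{lm:maximum-agrees} is entirely compatible with such a tile existing. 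The remedy is not more iteration but the second, symmetric application of Lemma~\ref{lm:hilfslemma} to a vertex on the other side of $e$, which supplies the missing inequality immediately.
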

\begin{proof}
If $e$ is not an edge of $Q$, we find vertices $q_i$ and $q_j$ of $Q$ with 
$s_1<q_i<s_2<q_j<s_1$. We can thus use 
Lemma~\ref{lm:hilfslemma} and Lemma~\ref{lm:covering} for $q_i$ and 
again for $q_j$ to see that $Q$ has $r-2$ vertices on the left side of $e$ and 
$r-2$ vertices to the right of $e$, and that they all differ from $s_1$ and from $s_2$. 
\end{proof}


\begin{lm} \label{lm:size4}
Let $T_1$ and $T_2$ be two tilings of a polygon $P$ with 
$\Scott(T_1) = \Scott(T_2)$ and assume $r_1=r_2=4$.
Then $\te{T_1 } = \te{ T_2 }$. 
\end{lm}
\begin{proof}
By Lemma~\ref{lm:simple-edge-rotation} 
the positions of 4-ears in $T_1$ and $T_2$ agree, when $r_1=r_2=4$.
By the Factorisation Lemma (Lemma~\ref{lm:glue-tiles}) 
we can remove (common) ears of size 4, to leave reduced tilings
$T_1'$ and $T_2'$ of some $P'$.
These necessarily have ears, but by Lemma~\ref{lm:strand-tile-magic},
(up to equivalence) 3-ears 
can be chosen to be in the same positions in each tiling.
Now iterate. 
\end{proof}


\begin{proof}[Proof of Theorem~\ref{thm:tilings-permut-inj}] 
If the maximum tile sizes of $T_1$ and of $T_2$ differ, the claim follows from 
Lemma~\ref{lm:maximum-agrees}. So let $r=r_1=r_2$ be the maximum tile size 
of $T_1$ and of $T_2$. If $r=4$, Lemma~\ref{lm:size4} proves the claim. So assume 
that there are tiles of size $r>4$ and consider such a tile $Q$ in $T_2$. 
By Corollary~\ref{cor:size5} there are no diagonals of $T_1$ 
`intersecting' $Q$, so in $T_1$ we have a tile containing $Q$.
Applying the same argument with the tilings reversed we see that $T_1$
and $T_2$ agree on parts tiled with tiles of size $>4$.

By the Factorisation Lemma (Lemma~\ref{lm:glue-tiles}), we can remove all 
(common) ears of size $>4$. 
Among the remaining (common) tiles of size at least 5, we choose a tile $Q$ and 
a non-boundary edge $e$ of $Q$, such that to one side of $e$, 
all tiles in $T_1$ and in $T_2$ have size at most four. Let $P'$ be the union of these 
tiles of size $\le 4$. 
By the Factorisation Lemma we have $\Scott(T_1\mid_{P'})=\Scott(T_2\mid_{P'})$ and 
by Lemma~\ref{lm:size4}, $[T_1\mid_{P'}]=[T_2\mid_{P'}]$. 
We can remove $P'$ and repeat the above until $Q$ is a (common) ear - which 
can be removed, too. 
Iterating this proves the claim. 
\end{proof}



\newcommand{\figd}{../fig/}
\newcommand{\geometricmutation}{flip}  


\section{On the image of the Scott map treated combinatorially}\label{sec:count}

To give an intrinsic characterization of the image in
$\Sym_n$ of the Scott map
$\Scott: A_n \rightarrow \Sym_n$ for all $n$ 
remains an interesting open problem.  
Note in particular  
that so far the map does not equip the image
with a group structure (or indeed any algebraic structure). 
Here 
we report on  one invariant which 
Theorem~\ref{thm:tilings-permut-inj} gives us access to, namely
the size of the image,
which is given by $|\AE_n |$.  


As an initial illustration we observe that: 
\begin{prop} \label{pr:ncr}
The 
number of permutations 
arising from tiling an $n$-gon using one $r$-gon ($r>3$) and triangles
otherwise  is $n\choose r$.
\end{prop}
\proof{ By the main Theorem this is the same as enumerating the classes
  in $\AE_n$ of this type.
Since the details of the triangulated part are irrelevant,  
the class is determined by choosing the vertices of the $r$-gon. 
Hence choosing $r$ from $n$. \qed}

\begin{ex}
In total, there are 26 permutations arising from 
the 45 tilings of the hexagon: 
one from the empty tiling; 
$6$ from tilings with one pentagon and one triangle; 
$15$ from tilings with one quadrilateral and two triangles; 
$3$ from tilings using two quadrilaterals; and 
1 from the triangulation case. \\
Figure~\ref{fig:hexagon} contains examples of these tilings and the associated 
permutations. 
\end{ex}

In order to go further we will need some notation. 


\subsection{Notation and known results}\label{sec:cayleyans}
\newcommand{\ralph}{r^{\alpha_r} (r-1)^{\alpha_{r-1}} \cdots 2^{\alpha_2}
  1^{\alpha_1}}

Recall that an 
integer partition $\lambda=(\lambda_1,\lambda_2,\dots)$ 
has also the exponent notation:
$$
\lambda = r^{\alpha_r} (r-1)^{\alpha_{r-1}} \cdots 2^{\alpha_2} 1^{\alpha_1}
$$
A {\em $\lambda$-tiling} is a tiling with, for each $d$,
$\alpha_d$ tiles that are $(d+2)$-gonal. 

Recall that 
$A_n$ is the complex 
of tilings of the $n$-gon. Define $a_n = |A_n| $.
Write
$A_n(m)$ (with $m \in \{0,1,2,\dots, n-3 \}$)
for the set (and $a_n(m)$ the number) of tilings with $m$
diagonals. 
Write
$A_n(\lambda)$ (with $\lambda$ an integer partition of $n-2$) 
for the set of $\lambda$-tilings 
(thus with a $m$-gonal face for each row $\lambda_i = m-2$). 
Thus
\begin{equation} \label{eq:mlam}
A_n(m) \; = \bigcup_{\lambda \vdash n-2 \; : \; \lambda'_1 = m+1 } A_n(\lambda)
\end{equation}



Similarly recall
$\AE_n$ is the set of classes of tilings under triangulated-part/flip
equivalence. Write 
$\AE_n(m)$ for the set $A_n(m)$ under triangulated-part equivalence and 
$\AE_n(\lambda)$ the set $A_n(\lambda)$ under triangulated-part equivalence. 


\mdef
The sequence $a_n$ is   
the little Schr\"oder numbers (see e.g. \cite{Stanley97} and 
OEIS A001003).
It is related to the Fuss--Euler combinatoric as follows. 
By \cite{ps} the number of tilings of the $n$-gon with $m$
diagonals is 
\begin{equation}  \label{eq:qm1n}
a_n(m) = \frac{1}{m+1}{n+m-1\choose m}{n-3\choose m} 
\end{equation}
($a_n(m) = q_m(1,n) $ from \cite{ps}). 
Thus
in addition to the usual generating function
\[
\sum_{n \geq 0} a_n x^n = \frac{1+x-\sqrt{1-6x+x^2}}{4x}
\]
we have
\begin{equation}  \label{eq:a-n}
a_n = \sum_{m=0}^n\frac{1}{m+1}{n+m-1\choose m}{n-3\choose m}
\end{equation}

\subsection{Explicit construction of $A_n$}
Of greater use than an expression for the size of $A_n$ is an
explicit construction of all tilings. 
For this we shall consider a tiling in $A_n$ to be 
as in the formal definition, i.e. to be the same as its set of arcs. 
This is the set of diagonals in the present polygon case, 
where we can
represent an arc between vertices $i,j$ 
unambiguously by $\di{i,j}$. 
In particular then we have an inclusion 
$A_{n-1} \hookrightarrow A_n$. 
The copy of $A_{n-1}$ in $A_n$ is precisely the subset of tilings in
which vertex $n$ is simple and there is no diagonal $\di{ 1,n-1 }$. 

There is a disjoint image $J(A_{n-1})$ of $A_{n-1}$ in $A_n$ given by 
$J(T)=T \cup \{\di{ 1, n-1 }\}$. 
The set $A_{n-1} \sqcup J(A_{n-1})$ is the subset of $A_n$ of
elements in which $n$ is simple. 
Consider in the complement the subset of tilings containing 
$\di{n-2,n }$. 
In this the vertex $n-1$ is necessarily simple. 
Thus this subset
is the analogue  $J_{n-1}(A_{n-1})$ of $J(A_{n-1})$ constructed with
$n-1$ instead of $n$ as the distinguished simple vertex. The practical
difference is that (i) the image tilings have all occurences of $n-1$
replaced by $n$; (ii) the `added' diagonal is $\di{n-2,n }$.


There remain in $A_n$ the tilings in which $n$ is not simple but there
is not a diagonal $\di{n-2,n }$. Consider those for which there is a
diagonal $\di{ n-3, n }$. 
In the presence of this diagonal any tiling `factorises' into the
parts in the two subpolygons on either side of this diagonal. 
One of these has vertices $1,2,...,n-3$ and $n$, and so its tilings
are an image of $A_{n-2}$ where vertex $n-2$ becomes vertex $n$.
The other has vertices $n-3,n-2,n-1,n$ 
and so has tilings from a
shifted image of $A_4$, but has $n$ simple (since $\di{ n-3 , n }$ is
the first diagonal in the original tiling).
Since $n$ is simple, it 
 is the part of that image coming from $A_3 \sqcup J(A_3)$.
We write $2.K(A_3)$ for these two shifted copied of $A_3$.
We write $2.K(A_3)\cdot A_{n-2}$ for the meld with tilings from $A_{n-2}$
to construct the set of tilings of the original polygon.

There now remain in $A_n$ the tilings in which $n$ is not simple but there
is not a diagonal $\di{n-2,n }$ or $\di{ n-3,n }$. 
Consider those for which there is a diagonal $\di{ n-4, n }$. 
In the presence of this diagonal any tiling `factorises' into the
parts in the two subpolygons on either side of this diagonal. 
We have the obvious generalisation of the preceeding construction in this case,
written $2.K(A_4)\cdot A_{n-3}$.

We may iterate this construction until all cases of diagonals from $n$
are included. We have established the following.

\newcommand{\lA}{{\mathbf A}} 
\newcommand{\cupp}{\cup}  

\begin{prop}
Consider the list defined recursively by $\lA_3 = (\emptyset )$ and 
$$
\lA_n \; = \; \lA_{n-1} \cupp J(\lA_{n-1}) \cupp J_{n-1}(\lA_{n-1}) 
                \cupp \bigcup_{r=2}^{n-3} 2.K(\lA_{r+1})\cdot\lA_{n-r} 
$$
where set operations on lists are considered as concatenation in the
natural order; $2.-$ denotes the doubling as above;
$K()$ denotes the relabeling of all vertices so that
the argument describes a suitable subpolygon;
and $A\cdot B$ denotes the meld of tilings from subpolygons as above.
Then this list is precisely a total order of $A_n$. 
\end{prop}
\begin{proof}
Noting the argument preceding the Proposition, 
it remains to lift the construction from the set to the list.
But this requires only the interpretation of union as concatenation.
\end{proof}

\subsection{Tables for $A_n(\lambda)$}\label{sec:cayleyanswer}


The class sets $\AE_n$ are harder to enumerate than $A_n$. 
Practically, one approach is to list elements of
$A_n$ and organise by arrangement of their triangulated parts, which
determines the class size.
We first recall the numbers $a_n(m)$ of tilings of an $n$-gon with $m$
diagonals: see Table~\ref{tab:1}.
\begin{table} 
\[
\begin{array}{l|l|c|l|l|l|l|l|l|l|}
n & \Sigma & m=0  & 1  & 2    & 3   & 4    & 5 &6 & 7\\ \hline
3 & 1      & 1  &&&&&&&\\
4 & 3      & 1    & 2  & &&&&&\\ 
5 & 11     & 1    & 5  & 5&&&&& \\
6 & 45     & 1    & 9  & 21   & 14 &&&&\\
7 & 197    & 1    & 14 & 56   & 84  & 42 &&&\\
8 & 903    & 1    & 20 & 120  & 300 & 330  & 132&& \\
9 & 4279   & 1    & 27 & 225  & 825 & 1485 & 1287 &  429 &\\
10 & 20793 & 1    & 35 & 385 & 1925 & 5005 & 7007 & 5005 & 1430 \\
\hline
n &        & 1    & {\tiny\frac{n(n-3)}{2}} & 
          \!\!\!{\tiny\frac{{\! n+1\choose 2}\!{\! n-3\choose 2\!}}{3}}\!\!
                          &     & && &
\end{array}
\]
\caption{ Values of $a_n(m)$, and hence $a_n$, in low rank. \label{tab:1}}
\end{table}
%
The main diagonal enumerates the top dimensional simplices in $A_n$. 
It counts triangulations and hence is the Catalan sequence $C_n$. 
The entries in the next diagonal 
\ignore{
are the simplices with one 
are the simplices just below the top-dimensional ones \footnote{check language!}, they} 
correspond to tilings with a single quadrilateral and triangles else. 

\begin{table}
\[
\begin{array}{l|l|c|l|l|l|l|l|l|l|}
n & \Sigma & m=0  & 1  & 2   & 3   & 4  & 5 &6 &7  \\ \hline
3 & 1      & 1  &&&&&&&\\
4 & 2      & 1    & 1  &&&&&& \\ 
5 & 7      & 1    & 5  & 1 &&&&&\\
6 & 26     & 1    & 9  & 15  & 1 &&&&\\
7 & 100    & 1    & 14 & 49  & 35  & 1 &&&\\
8 & 404    & 1    & 20 & 112 & 200 & 70 & 1& &\\
9 &   1691 & 1    & 27 & 216 &  654 &  666  & 126 & 1& \\
10 &   7254 & 1    & 35 &  375  & 1660 & 
3070 & 1902  & 210 & 1 
\end{array}
\]
\caption{Table of $\AE_n$ sizes up to $n=10$. \label{tab:2}}
\end{table}
%

We will give the number of elements of $\AE_n(m)$ for
small $n$ in Table~\ref{tab:2}. 
In order to verify this it will be convenient to 
refine  tables \ref{tab:1} and \ref{tab:2} 
by considering these numbers for fixed partitions $\lambda$. 
Specifically we subdivide each case of $m$ from the previous tables
according to $\lambda$, with the $m$-th composite entry written 
as a list of entries in the form 
$(\lambda_1, \lambda_2, ...) \over a_n(\lambda)$
ranging over all $\lambda$ with $|\lambda | =m$. 
Thus for example $(32) \over 7$ tells that $a_7((3,2))=7$. 
We include  
Table~\ref{tab:Anl} for $A_n(\lambda)$ and  
Table~\ref{fig:AE-lambda} for $\AE_n(\lambda)$. 
Neither table is known previously. 
The $a_n$ case is computed partly by brute force (and see below);
verified in GAP \cite{GAP}, and checked 
using identity (\ref{eq:mlam})). 

For the purpose of computing $\AE_n$ a better filtration is by the
partition describing the size of the connected triangulated
regions. But this is even harder to compute in general.


\newcommand{\st}[2]{\begin{array}{ccccc} (#1) \\ \hline #2 \end{array}}
\newcommand{\sto}[2]{\begin{array}{ccccc} #1 \\ \hline #2 \end{array}}

\begin{table}
\[
\hspace{-.5in} {
{\rotatebox{90}{
\scalebox{.74}{
$
\begin{array}{c}
\begin{array}{l|l|cllllllllllll}
n & \Sigma &\; |\lambda|= {0}  & 1  & 2   & 3   & 4  & 5 & 6 & 7  \\ \hline
3 & 1      & \st{1}{1}  \\
4 & 3      & \st{2}{1}    & \st{1^2}{2}  & \\ 
5 & 11     & \st{3}{1}    & \st{21}{5}      & \st{1^3}{5} \\
6 & 45     & \st{4}{1}    & \st{31)&(2^2}{6&3} &\st{21^2}{21} & \st{1^4}{14} \\
7 & 197    & \st{5}{1}    & \st{41)&(32}{7&7} & \st{31^2)&(2^21}{28&28}  & \st{21^3}{84}  & \st{1^5}{42} \\
8 & 903    & \st{6}{1}    & \st{51)&(42)&(3^2}{8&8&4} &
  \st{41^2)&(321)&(2^3}{36&72&12} & \st{31^3)&(2^2 1^2}{120&180} & 
    \st{21^4}{330} & \st{1^6}{132} \\
9 & 4279   & \st{7}{1}    & \st{61)&(52)&(43}{9&9&9} & 
  \st{51^2)&(421)&(3^2 1)&(32^2}{45&90&45&45} & \st{41^3)&(321^2)&(2^3 1}{165&495&165} &
    \st{3 1^4)&(2^2 1^3}{495&990} & \st{21^5}{1287} & \st{1^7}{429}\\
10 &   20793  & \st{8}{1}    & \st{71)&(62)&(53)&(4^2}{10&10&10&5} & 
  \st{61^2)&(521)&(431)&(42^2)&(3^22}{55&110&110&55&55} & %
  \st{51^3)&(421^2)&(3^21^2)&(32^21)&(2^4}{220& 660 & 330 &660& 55} &
    \st{4 1^4)&(32 1^3)&(2^31^2}{  715&2860 &1430} & \st{31^5)&(2^21^4}{2002& 5005 } & \st{21^6}{5005} & \st{1^8}{1430}
\end{array}
\\ \\ \mbox{Table~\ref{tab:Anl}: Table of $A_n(\lambda)$.}
\end{array}
$
}  
}}
}
\]
\caption{Table for size of $A_n(\lambda)$. \label{tab:Anl}}
\end{table}


\begin{table}
\[ 
\hspace{-.5in} {
{\rotatebox{90}{
\scalebox{.75}{%
$
\begin{array}{c} 
\begin{array}{l|l|cllllllllll}
n & \Sigma &\; |\lambda|= {0}  & 1  & 2   & 3   & 4  & 5  & 6 & 7\\ \hline
3 & 1      & \st{1}{1}  \\
4 & 2      & \st{2}{1}    & \st{1^2}{1}  & \\ 
5 & 7      & \st{3}{1}    & \st{21}{5}      & \st{1^3}{1} \\
6 & 26     & \st{4}{1}    & \st{31)&(2^2}{6&3} &\st{21^2}{15} & \st{1^4}{1} \\
7 & 100    & \st{5}{1}    & \st{41)&(32}{7&7} & \st{31^2)&(2^21}{21&28}  & \st{21^3}{35}  & \st{1^5}{1} \\
8 & 404    & \st{6}{1}    & \st{51)&(42)&(3^2}{8&8&4} &
  \st{41^2)&(321)&(2^3}{28&72&12} & \st{31^3)&(2^2 1^2}{56&144} & 
    \st{21^4}{70} & \st{1^6}{1} \\
9 &  1691     & \st{7}{1}    & \st{61)&(52)&(43}{9&9&9} & 
  \st{51^2)&(421)&(3^2 1)&(32^2}{ 36& 90 &45 &45 } & \st{41^3)&(321^2)&(2^3 1}{ 84& 405 & 165 } &
    \st{3 1^4)&(2^2 1^3}{ 126 &  540  } & \st{21^5}{126 } & \st{1^7}{1}\\
10 &    7254   & \st{8}{1}    & \st{71)&(62)&(53)&(4^2}{10&10&10&5} & 
  \st{61^2)&(521)&(431)&(42^2)&(3^22}{45&110&110&55&55} & %
 \st{51^3)&(421^2)&(3^21^2)&(32^21)&(2^4}{120& 550& 275&660& 55} & 
   \st{4 1^4)&(321^3)&(2^31^2}{210& 1650&1210} & \st{31^5)&(2^21^4}{252 &  1650} & \st{21^6}{210} & \st{1^8}{1}
\end{array} 
\\ \\
\mbox{Table~\ref{fig:AE-lambda}: 
      Table of $\AE_n(\lambda)$.} 
\end{array}
$ 
}}}
}
\] 
\caption{Table for size of \AE$_n(\lambda)$}\label{fig:AE-lambda}
\end{table}

\ignore{
\kb{in the table for \AE\ I have uncolored the entries for $n=9$  
that seem clear (the ones that 
are the same in $A_9$ and the ones with two triangles only. And the ones with 
one $r$-gon and s triangles else, as they are r choose s).}
}


%

\subsection{Formulae  
  for $|\AE_n(\lambda)|$ for all $n$.} $\quad$ \label{sec:AEn}

In the $\lambda$ notation Proposition~\ref{pr:ncr} becomes
\begin{equation} \label{eq:ncr2}
| \AE_n((r-2)1^{n-r}) | \; = \; {n \choose r}
\end{equation}

To determine the size of image of the Scott map for a polygon of a
given rank, one  
strategy is to compute $\AE_n(\lambda)$ through $A_n(\lambda)$. 
While $A_n(\lambda)$ is also not known in general,  we have a GAP code 
\cite{GAP,BMGAP} to compute any given case. 

If in a tiling, there is at most one triangle, we have 
$\AE_n(\lambda) \cong A_n(\lambda)$. 
In the case of two triangles, the following result  
determines $|\AE_n(\lambda)|$ 
from tilings of the same type and from tilings where 
the two triangles are replaced by a quadrilateral: 

\newcommand{\aA}{a_n} 

\begin{prop}\label{prop:reduction}
Let $\lambda=\ralph$. \\
(i) If $\alpha_1 < 2$ then $|\AE_n(\lambda)| = a_n(\lambda)$. \\
(ii) If $\alpha_1=2$ then 
\[
| \AE_n(\lambda) | \; = \; a_n(\lambda) - (\alpha_2 +1) a_n(\lambda 21^{-2})
\]
(iii)  If $\alpha_1=3$ then 
\[
| \AE_n(\lambda) | \;
     = \aA(\lambda) - (\alpha_2 +1) \aA(\lambda 21^{-2}) 
                  + (\alpha_3 +1) \aA(\lambda31^{-3})
\]
(iv)  If $\alpha_1=4$ then 
\[ 
| \AE_n(\lambda) | \;
   = \aA(\lambda) - (\alpha_{4}+1) \aA(\lambda 41^{-4})
   +(\alpha_3 +1) \aA(\lambda 31^{-3})
   \hspace{1in} \]
   \[ \hspace{2.1in}
             +{\alpha_2 +2 \choose 2} \aA(\lambda 221^{-4})
             -(\alpha_2 +1) \aA(\lambda 21^{-2}) 
\]
\end{prop}
\proof{ (ii) Consider partitioning $A=A_n(\lambda)$ into a subset $A'$ of 
tilings where the triangles are adjacent, and $A''$ where they are not.
Evidently $ | \AE_n(\lambda) | \; = \; |A'|/2 + |A''| = |A|-|A'|/2$. 
On the other hand in $A'$ the triangles form a distinguished
quadrilateral. For each element of $A_n(\lambda21^{-2})$ we get
$\alpha_2 +1$ ways of selecting a distinguished quadrilateral.
There are two ways of subdividing this quadrilateral, 
thus $|A'| = 2(\alpha_2 +1) a_n(\lambda 21^{-2})$. 
\qed }

%


\begin{ex}
Proposition~\ref{prop:reduction} determines $|\AE_8(2^2 1^2)|$. 
Here $A_8(2^2 1^2)$ gives an
overcount because of the elements where the two triangles are
adjacent.
Only one representative of each pair under \geometricmutation\ should
be kept. 
These are counted by marking one quadrilateral in each
element of $A_8(2^3)$. There are three ways of doing this, so we have
\[
|\AE_8(2^2 1^2)| = |A_8(2^2 1^2)| - 3 |A_8(2^3) |   = 180-36
\]
from Table~\ref{tab:Anl}. Similarly
$
|\AE_8(4 1^2)| = |A_8(4 1^2)| -  |A_8(42) |   = 36-8 .
$
\end{ex}

%

\mdef
{\em Proof of (iii):}
For  $\alpha_1 = 3$  
partition $A=A_n(\lambda)$ into subset $A'$ of tilings with three
triangles together; $A''$ with two together; and $A'''$ with all
separate. 
We have $|\AE_n(\lambda)| = |A'''| + |A''|/2 + |A'|/5$.
That is, 
\begin{equation}
|\AE_n(\lambda)| = |A| - |A''|/2 -4 |A'|/5 .
\end{equation}
%
%
Considering the triangulated pentagon in a tiling $T$ in $A'$ as a
distinguished pentagon we have 
\begin{equation} \label{eq:Aprime}
|A'| = 5(\alpha_3 +1) a_n(\lambda 31^{-3}) .
\end{equation}
\ignore{ ...
(2) The latter overcount is given by 
$4(\alpha_3 +1))A_n(\lambda 31^{-3})$. 
The 4 is $C_3 -1$, arising 
since we should keep one representative from $C_3 = 5$
triangulations of the given 5-gon. 
The $(\alpha_3 +1)$ is the number of ways of
choosing a 5-gon to mark in an element of $A_n(\lambda 3 1^{-3})$.
\\
(1) ... But the former is more involved... We
}
Next aiming to enumerate $A''$, consider $\lambda 21^{-2}$,
somewhat as in the proof of (ii), 
but here there is another triangle, which must not touch the marked 4-gon. 
Let us write $(\alpha_2 +1) A(\lambda 21^{-2})$ to denote a version of 
$A(\lambda 21^{-2})$ where one of the quads is marked.
There are two ways of triangulating the marked quad, giving 
$X=2(\alpha_2 +1) A(\lambda 21^{-2})$, say.
Consider the subset $B$ of $X$ of tilings where the marked quadrilateral and triangle are
not adjacent.

Claim: $B \cong A''$.
\\
Proof: The construction (forgetting the mark) defines a map $B \rightarrow A''$.
Marking the adjacent pair of triangles in an element of $A''$
gives a map $A'' \rightarrow B$ that is inverse to it. \qed

The complementary subset $C$ of $X$ has quadrilateral and triangle adjacent. 
Elements map into $A'$ by forgetting the mark.

Claim: $C$ double counts $A'$, i.e. the forget-map is surjective but
not injective. 
\\
Proof: There are 5 ways the quadrilateral and triangle can occupy a pentagon
together, and two ways of triangulating the quad. The cases can be
written out, and this double-counts the triangulations of the
pentagon. \qed


Altogether
$
A'' =  B = X-C = X-2A'
$ so 
\[
\AE(\lambda) = A(\lambda) - ((1/2)X-A') - (4/5) A'
     = A(\lambda) - (X/2) + A'/5 
\] \[ \hspace{1in}
     = A(\lambda) - (\alpha_2 +1) A(\lambda 21^{-2}) 
                  + (\alpha_3 +1) A(\lambda31^{-3})
\]
\qed





\mdef 
{\em Proof of (iv):}
For $\alpha_1 = 4$ 
partition $A=A(\lambda)$ by 
$
A = A^4 + A^{31} + A^{22} + A^{211} + A^{1111}
$
so that 
$$
\AE  = A^4/C_4 + A^{31}/C_3 + A^{22}/C_2^2 + A^{211}/C_2 + A^{1111} 
    = A - \frac{13}{14} A^4 - \frac{4}{5} A^{31} - \frac{3}{4} A^{22}
        -\frac{1}{2} A^{211}
$$

By direct analogy with (\ref{eq:Aprime}) we claim
$$
A^4 = 14(\alpha_{4}+1) A(\lambda 41^{-4})
$$
Next consider $X=5(\alpha_3 +1) A(\lambda 31^{-3})$, marking one
5-gon,
and then triangulating it.
We have a subset $B$ where the 5-gon and triangle are not adjacent;
and complement $C$.

Claim: $B \cong A^{31}$. This follows as in the proof of part (iii). 

The complement $C$ maps to $A^4$   
by forgetting the mark. 
 
Claim: 
$14 |C| = 30 |A^4|$. 
\\
Proof: There are 6 ways the 5-gon and triangle can occupy a hexagon
together, and 5 ways to triangulate the 5-gon. 
This gives 30 marked cases, which pass to 14 triangulations.

So far we have that 
$$
A^{31} = B = X-C = 5(\alpha_3 +1) A(\lambda 31^{-3}) -\frac{30}{14} A^4
$$
It remains to determine $A^{22}$ and $A^{211}$. 


\mdef
Next consider $Y= 4{(\alpha_2 +2) \choose 2} A(\lambda 221^{-4})$,
marking two 4-gons, and then triangulating them.
Subset $D$ has the 4-gons non-adjacent; and $E$ is the complement.

Claim: $D \cong A^{22}$. This follows similarly as the statement on $B$. 

The complement $E$ maps to $A^4$ by forgetting the marks. 

Claim: $14 |E| = 12 |A^4 |$.
\\
Proof: There are 3 ways the 4-gons can occupy a hexagon together,
and 4 ways to triangulate them. 
(NB the map is not surjective --- not every triangulation of a hexagon
resolves as two quadrilateral triangulations --- but we only need to
get the count right. We always get 12 out of 14 possible in each case.)

So far we have
$$
A^{22} = Y-E 
 = 4{{\alpha_2 +2} \choose 2} A(\lambda 221^{-4}) - \frac{12}{14}|A^4|
$$
Next we need $A^{211}$. 


\mdef
Next consider $Z=2 (\alpha_2 +1) A(\lambda 21^{-2})$, marking a 4-gon
and triangulating it. 
Subset $F$ has the three parts non-adjacent. Subset $G$ has the 4-gon
and one triangle adjacent. Subset $G'$ has the two triangles
adjacent. Subset $H$ has all three parts adjacent:
$$
Z = F + G + G' + H
$$

Claim: $F \cong A^{211}$. This follows similarly as the statements on $B$ and on $C$. 

The set $G$ maps to $A^{31}$, and $G'$ to $A^{22}$, and $H$ to $A^4$,
by forgetting the marks.

Claim: (a) $ |G| = 2 |A^{31} |$ and (b) $ |G'| =  2|A^{22} |$
  and (c) $14 |H| = 42 |A^4 |$.
\\ Proof: (a) Elements of $G$ pass to tilings with 
triangulations of a 5-gon and a separate triangle. 
The collection of them triangulating a given 5-gon
and triangle has order 10 (5 ways to mark a quadrilateral in the 5-gon, then
two ways to triangulate it). On the other hand the number of
triangulations of the same region in $A^{31}$ is 5.
\\ (b) Elements of $G'$ pass to tilings with triangulations of two 4-gons. 
The collection of such gives all these triangulations. Each one occurs
twice in $G'$ since the triangulation of the two 4-gon regions can arise in
$G'$ with one or the other starting out as the marked 4-gon. 
\\ (c) Elements of $H$ pass to tilings with triangulations of a
hexagon.
The collection of such gives $A_6(21^2) = 21$ ways of tiling the
hexagon with quadrilateral and two triangles, then two ways of tiling the quad.
On the other hand there are 14 triangulations of this hexagon in $A^4$.

We have $
A^{211} = Z-(G+G'+H) = 2 (\alpha_2 +1) A(\lambda 21^{-2})
                          - ( \frac{2}{1} A^{31} + \frac{2}{1} A^{22}
                                    +\frac{42}{14} A^4 ) .
$


Altogether now
$$
\AE(\lambda)  = A-\frac{13}{14} A^4 - \frac{4}{5} A^{31} - \frac{3}{4} A^{22}
                 -\frac{1}{2} A^{211}
\hspace{1in} $$ $$  
= A(\lambda) 
       -\frac{13}{14} A^4
       -\frac{4}{5} \left( 5(\alpha_3 +1) A(\lambda 31^{-3}) 
                              -\frac{30}{14} A^4 \right)
       -\frac{3}{4} \left( 4{\alpha_2 +2 \choose 2} A(\lambda 221^{-4}) 
                            - \frac{12}{14}|A^4| \right)
$$ $$       -\frac{1}{2} \left( 2 (\alpha_2 +1) A(\lambda 21^{-2})
                            - ( \frac{2}{1} A^{31} + \frac{2}{1} A^{22}
                                   +\frac{42}{14} A^4 )     \right)
$$ $$  
= A(\lambda) 
       +\frac{-13+21}{14} A^4
       +\frac{1}{5} \left( 5(\alpha_3 +1) A(\lambda 31^{-3}) 
                              -\frac{30}{14} A^4 \right)
       +\frac{1}{4} \left( 4{\alpha_2 +2 \choose 2} A(\lambda 221^{-4}) 
                            - \frac{12}{14}|A^4| \right)
$$ $$       -\frac{1}{2} \left( 2 (\alpha_2 +1) A(\lambda 21^{-2})
                                     \right)
$$ $$
= A(\lambda) + \frac{-13-6-3+21}{14} A^4 
             +(\alpha_3 +1) A(\lambda 31^{-3})
             +{\alpha_2 +2 \choose 2} A(\lambda 221^{-4})
             -(\alpha_2 +1) A(\lambda 21^{-2}) 
$$ $$
= A(\lambda) - (\alpha_{4}+1) A(\lambda 41^{-4})
             +(\alpha_3 +1) A(\lambda 31^{-3})
             +{\alpha_2 +2 \choose 2} A(\lambda 221^{-4})
             -(\alpha_2 +1) A(\lambda 21^{-2}) 
$$
\qed


\ignore{{

\begin{ex}
Checking the entries in 
\[
\AE(\lambda)  = A-\frac{13}{14} A^4 - \frac{4}{5} A^{31} - \frac{3}{4} A^{22}
                 -\frac{1}{2} A^{211}
\]

The formula for $A^{211}$ seems off. \\

(a) Take $\lambda=21^4$. The number of tilings where the 4 triangles form a hexagon is $14\times 8$. 
The number of tilings, where the triangles form a pentagon and a triangle (but not a hexagon) 
is 120 ($15\times 8$). The number of tilings where they form two quadrilateral, but not 
a hexagon: 48. And the number of tilings where they form a quadrilateral, two triangles, all 
disjoint, is 48. 

On the other hand, $A^4=14\times 8$, $A^{31}=120$, $A^{22}=48$, $A^{211}=96$. 

(b) Take $\lambda=31^4$. Here, the formula predicts 180. 
Counting tilings manually, $A^{21^2}$ is 108. With this, it works (correct number for \AE$_9(\lambda)$). 

(c) Take $\lambda=41^4$. The formula gives $A^{211}=300$ and I count 200. With the 200, 
I get the correct number for \AE$_9(\lambda)$. 

(d) Take $\lambda=2^21^4$. The formula gives $A^{211}=2310$. With the modified version 
(with $|G'|=2|A^{22}|$) I get $A^{211}=1650$. And then, miraculously, 
$|\AE(2^21^4)|=1650$ (as the manual count in SubdivisionsL). 
\end{ex}

}}

\subsection{Tables for $\AE_n$}\label{sec:countAEn}

\begin{prop}
The numbers $\AE_n$ for $n<11$ are given in Table~\ref{tab:2}.
\end{prop}
\proof{
The numbers $a_n(\lambda)$ are given in Table~\ref{tab:Anl} by a GAP
calculation \cite{BMGAP}. 
The numbers in Table~\ref{fig:AE-lambda} then follow from 
formula~(\ref{eq:ncr2}) 
and Proposition~\ref{prop:reduction}. 
Table~\ref{tab:2} follows immediately.
\qed
}

\subsection{On asymptotics}

We determined in tables \ref{tab:2}, \ref{fig:AE-lambda} the sizes 
of the image of the Scott map in low rank. 
Of course the ratio of successive sizes of the formal codomains 
grows with $n$ as  
$|\Sym_n| / |\Sym_{n-1} | = n$.
In the next table we consider the
ratios of two consecutive entries of the sequence $|\AE_n|_n$. 

\begin{tabular}{l|rrrrrrrr}
$n$ & 3 & 4 & 5 & 6 & 7 & 8 & 9 & 10 \\
\hline 
$|\AE_n|$ & 1 & 2 & 7 & 26 & 100 & 404 & 1691 & 7254 \\
$|\AE_n|/|\AE_{n-1}|$ &&  2 & 3.5 & 3.71 & 3.85 & 4.04 & 4.19 &  4.29 
\end{tabular}

\mdef
A paradigm for this is the Catalan combinatoric $C_n$
(see e.g. \cite{Stanley97}), 
which can also be equipped with an inclusion
in the permutations $\Sigma_n$ ---
see e.g. \cite{
Knuth98,TL321perm}
(NB this inclusion is not related to the inclusion in $A_n$ already noted).
It is straightforward in this case to verify that the asymptotic
growth rate is 4.

\begin{tabular}{l|rrrrrrrr}
$n$ & 3 & 4 & 5 & 6 & 7 & 8 & 9 & 10 \\
\hline 
$|C_n|$ & 1 & 2 & 5 & 14 & 42 & 132 & 429 & 1430 \\
$|C_n|/|C_{n-1}|$ &&  2 & 2.5 & 2.8 & 3   & 3.14 & 3.25  & 3.33 
\end{tabular}

\noindent
This raises the question:  
Is there a limit rate in the $\AE_n$ case?

\ignore{{
\begin{conj}
We conjecture that the ratio of successive entries in the size of the image of the Scott map 
is smaller than the ratio of successive sizes of the corresponding permutation groups. \\
\kub{With the maximum of the increase/slope around the entry 404?? (From $n=7$ to $n=8$, 
factor 4.04, from $n=8$ to $n=9$ factor 4.19, from $n=9$ to $n=10$ factor 4.04.) }
\end{conj}
}}

\section{On  enumerable classes of
 strand diagrams and plabic graphs}\label{ss:new-8}
%


%
%


\newcommand{\XX}{\mathcal X} 
\newcommand{\minimalist}{minimalist}
\newcommand{\rhombic}{rhombic} 

\newcommand{\Pv}[1]{\cite[#1]{Postnikov}}
\newcommand{\Pcor}[1]{\cite[Cor.14.2]{Postnikov}}

Let $\Pbr_n$ be the set of reduced plabic graphs 
\cite[\S 11]{Postnikov} 
of rank-$n$; 
and $\Po_n$ be the set of alternating strand diagrams 
as in~\cite[\S 14]{Postnikov}.
(See also Section~\ref{ssec:scottt-strand} and (\ref{de:plab1}).) 
Their relationship with $A_n$ can be summarized 
as follows: 

\[
\xymatrix{
& & A_n\ar@{^{(}->}[rd]^{\Scottt}\ar@{_{(}->}[ld]_{\Gst} \\
\XX_n\ar@{^{(}->}[r] & \Pbr_n\ar@<1ex>[rr]^D && \Po_n\ar@<1ex>[ll]^{D'} &\Pfim_n\ar@{_{(}->}[l]
}
\]
Here $\Gst$ is as in \S\ref{ss:intro}, $\Scottt$ as in \S\ref{ss:scott},
and $D,D'$ as in \S\ref{ssec:maps-str-plab}. 
In this section  
we apply Theorem~\ref{th:main} to 
corresponding subsets of 
plabic and strand diagrams. 
We define
 the sets $\XX_n$ of {\em \minimalist} strand diagrams, 
see Section~\ref{ssec:scottt-strand}; 
and $\Pfim_n$ of {\em \rhombic} (plabic) graphs, 
see (\ref{de:plab1}). 
We will show that these sets are in bijection with $A_n$.


For the sake of brevity we refer to Postnikov's original paper
for  motivations behind the
constructions of plabic and strand diagrams themselves.
These are large and complex classes of objects,
and canonical forms for them would be a useful tool.
%
%
The rigid/canonical nature of $A_n$   
induces
canonical forms for 
(the restricted cases of)
the other constructions. 


We start by characterizing the image of $\Scottt$ in 
Theorem~\ref{th:th3} as the 
set of \minimalist\ strand diagram and hence show that $\Scottt$ is injective. 
In Section~\ref{ssec:maps-str-plab} 
we recall Postnikov's
bijections between alternating strand diagrams and plabic graphs. 
(An illustration of the connection between plabic and strand diagrams is
given by Figure~\ref{fig:pleb1}(b).) 
This allows us to characterize the image of $\Gst$ in 
Section~\ref{ssec:tiling-plabic} 
as the set of \rhombic\ plabic graphs, Theorem~\ref{thm:iso-Pfim}.
Finally we determine the images of flip equivalence in the two other
realisations.

\begin{figure}
\raisebox{.431in}{\includegraphics[width=5.5cm]{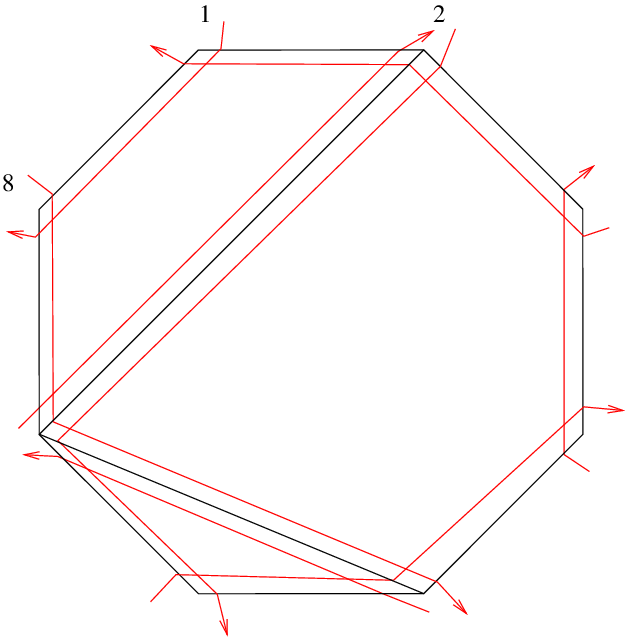}}
\raisebox{.431in}{\includegraphics[width=5.5cm]{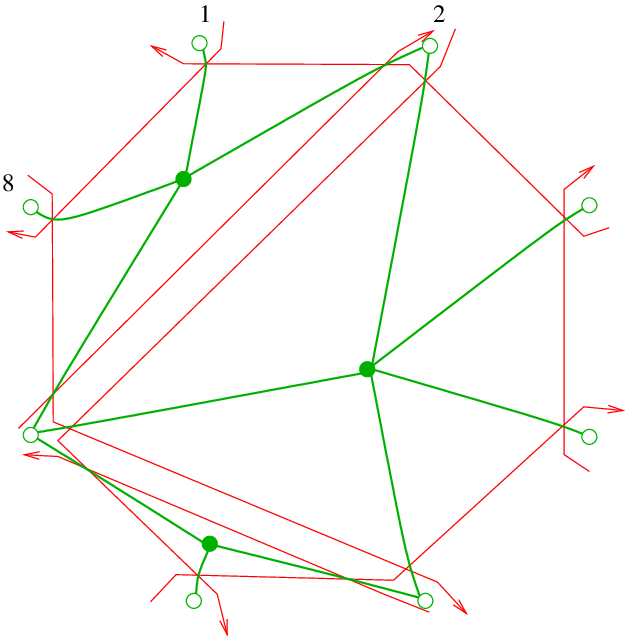}}
\caption{\label{fig:pleb1}
(a) A tiling $T$ (black) with strand diagram $\Scottt(T)$ of $T$; 
(b) the plabic graph ${D'}(\Scottt(T))$ (green). 
}
\end{figure}


\subsection{On $\Scottt$ and strand diagrams}\label{ssec:scottt-strand}
\ignore{{
Note that since the vertices of a Jordan diagram 
regarded as a graph are 4-valent 
the faces of the dual graph are quadrilateral, so it is 

Note that on the disk $D$ each strand $L_i$ of a Jordan diagram partitions 
$D \setminus L_i$ into two connected components. 
Label these components $\pm 1$.
Each point in $D \setminus \cup_i L_i$ can thus be labelled by the
product

Pick an arbitrary point in a face of a Jordan diagram on the disk.
Every other such point is 

It will be clear that every Jordan diagram on the disk can be given a
chequerboard colouring of its faces.

}}


\newcommand{\axiomi}{i}  
\newcommand{\axiomii}{i}
\newcommand{\axiomiii}{ii}
\newcommand{\axiomiv}{iii}
\newcommand{\axiomv}{iv}

An {\em absolute strand diagram} on $(S,M)$ is a 
\medial\ diagram such that: 
$\;$ 
(\axiomii)  strands 
   crossing a given strand 
   must alternate in direction;
(\axiomiii) if two strands 
   cross twice such as to cut out a simple disk
     then the resultant loop is oriented;
(\axiomiv) 
if a strand is self-crossing then no resultant loop is a simple disk;
(\axiomv) no strand is a closed loop cutting out a simple disk.


Note that this agrees with the ordinary definition of {\em \ASD} 
\cite{Postnikov,BaurKingMarsh} 
for $S$ a simple disk.
Here  rank $n= | M_\partial |=|M|$.

\ignore{{
\kub{don't we need to fix labels also on strand diagrams, i.e. something like the following in order 
to define our target set of strand diagrams 
and a map from them to $A_n$?}\\

\textcolor{magenta}{Note also, that in the polygon/disk with $n$ marked points, every \ASD\ 
has $n$ clockwise oriented regions on the boundary, with labels 
$1,2,\dots, n$ inherited by the labels of the \medial\ diagram, see Section~\ref{ss:scott}.} 

\medskip

\kub{lift the defn of $\Scottt(A_n)$ before the theorem as a new set, 
add the rank condition. (\textcolor{magenta}{done})\\
State proof of 8.3 as IFF, delete thm. write theorem saying the  
maps $\Scottt$ and new map are inverse to each other} 

\medskip
}}

\ignore{{
We remark that 
in general 
one could choose instead to equivalence under 
isotopies that fix boundary components setwise, but fix them pointwise
only at the ends of the motion (cf. \cite{BaurKingMarsh,LandoZvonkin04}).
It will be clear that this makes no difference to 
the disk case; or to $\Scottor(d)$ in general.
}}

\ignore{{
Let $\Po(S,M)$ denote the set of absolute strand diagrams on $(S,M)$ 
\kub{and $\Po_n$ for the set of absolute strand diagrams for the disked with $n$ marked points. }
}}
%
%


%
%





For any directed planar graph we classify the faces as 
clockwise, counterclockwise, 
alternating or other. 

\mdef
Let $\XX_n$ be the subset of rank-$n$ alternating strand diagrams 
whose faces are as follows: 
(i)
$n$ clockwise faces at the boundary, 
labelled $1,2,\dots, n$ going clockwise around the boundary; 
(ii) alternating faces 
with four sides; 
(iii) oriented faces in the interior that are counterclockwise and have 
at least 3 sides. 

We call the elements of $\XX_n$ {\em \minimalist\ strand diagrams}.
See Figure~\ref{fig:XX-example} for an example.  
\begin{figure}
\[
\includegraphics[width=5cm]{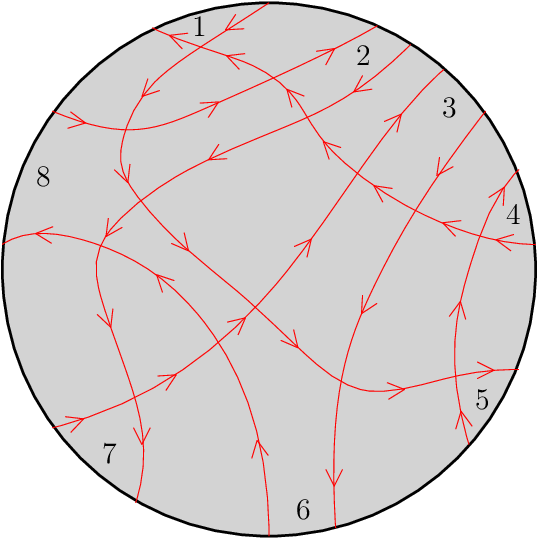}
\]
\caption{
A Jordan diagram on the 8-gon that is 
an element of $\XX_8$} \label{fig:XX-example}
\end{figure}


\mdef \label{pa:cheq}
Note that an element of $\XX_n$ (as every alternating strand diagram) 
has a chequerboard colouring of faces (see e.g. \cite{Postnikov}). 
If a clockwise face is black
(say) then all oriented faces are black and all alternating faces white.
Also the faces around an alternating face alternate
clockwise/counterclockwise. 
 

\newcommand{\ff}{{\mathsf f}}

\mdef
We define a `shrink' map 
$\ff:\XX_n\to A_n$ as follows: 
Let $d\in \XX_n$. 
Note from (\ref{pa:cheq})
that in $d$ regarded as an isotopy class of concrete diagrams
there are cases in which 
all the edges of clockwise faces are arbitrarily short.
Thus the clockwise faces are arbitrarily small neighbourhoods of $n$
points; and alternating edges have two short edges and two  edges
that pass between the clockwise faces (and hence are not short).
The paths of non-short edges are not constrained by the `shrinking' of
the clockwise edges.
Thus each pair may be brought close to each other, and hence 
form an arbitrarily narrow neighbourhood of a line
between two of the $n$ points.
Since no two alternating faces intersect, these lines cannot cross,
and so they form an element of $A_n$.

\medskip


\begin{theorem} \label{th:th3}
The map $\ff:\XX_n\to A_n$
is the inverse to a bijection  
$\Scottt:A_n\to \XX_n$. 
\end{theorem}

\begin{proof}
\ignore{{
That the image of $\Scottt$ is an alternating strand diagram 
follows from Lemma~\ref{lm:pf22}. 
It remains to see that  $\Scottt(A_n)\subseteq \XX_n$ and that 
compositions of the two maps are the identity. 
To see that $\Scottt$ maps tilings to elements of $\XX_n$ 
INSERT PROOF OF
\kub{old Lemma \ref{lm:properties-image}. FIX ME!} 

For the composition $\ff\circ \Scottt$: 
}}
It will be clear that $\ff$ makes sense on $\Scottt(T)$ since it even
makes sense tile by tile (cf. Fig.\ref{fig:p5inv1}). 
Indeed it recovers the tile,
so $\ff$ inverts $\Scottt$. 
The other steps have a similar flavour. 
\end{proof}

\ignore{{
Induction on the number of tiles. 
If $T$ is a single tile, the 
alternating regions in $\Scottt(T)$ are all boundary regions and
$f(\Scottt)$ is the untile, i.e.  
$\ff(\Scottt(T))=T$. 
Assume $T$ has $n\ge 2$ tiles. We know that $T$ has at least one ear
tile, say $t$ of size $r\ge 3$.  
Let the vertices of this ear be $i,i+1,\dots, i+r$ (reducing mod $n$). 
We have $\ff\circ\Scottt(t)=t$ 
(with vertices $i,i+1,\dots, i+r$). The tiling $T$ induces a tiling $T'$ on 
the polygon with the vertices 
$i+r,i+r+1,\dots, i$ (reducing mod $n$) and diagonal $[i,i+r]$. It has
$n-1$ tiles, so by induction  
hypothesis, 
$f\circ\Scottt(T')=T'$. 
The image $\Scottt(T)$ has an alternating 
region between the oriented regions of $i$ and of $i+r$. etc etc??? 
}}

\ignore{{
What about $\Scottt\circ \ff$? 

\end{proof}
}}

\smallskip

\noindent
{\bf Remark. } 
One can prove more generally, that $\Scottt$ is injective on tilings of 
$(S,M)$ and that the image of any tiling of $(S,M)$ is an absolute strand diagram. 
\ignore{
The injectivity follows as in the proof of Theorem~\ref{th:th3}. Let $T$ be a tiling of 
$(S,M)$. Then $\Scottt(T)$ is a \medial\ diagram (with transversal crossing) 
and it satisfies (\axiomii) by construction. (\axiomiii) holds as in the disk case. 
It remains to see that (\axiomiv) and (\axiomv) hold. 
Let $s$ be a strand in $\Scottt(T)$. If $s$ is self-crossing, $T$ contains a tile $t$ in which 
$s$ crosses near a vertex $p_0$. We need to see that $s$ cannot cut out a simple disk. 
If $s$ cuts out a loop containing $p_0$, then $p_0$ is a puncture in this loop 
and hence (\axiomiv) is satisfied. 
So consider the case where the loop is formed by the part of $s$ opposite $p_0$ and 
assume that this loop is a simple disk. In particular, it does not contain a boundary component. 
Then the loop is composed of at least three segments, where the loop crosses edges of the 
tiling. Since the loop cuts out a simple disk, the endpoints of these edges have to be outside 
this disk. But this is not possible (without creating monogons or digons). 
A similar argument shows that (\axiomv) holds. 
}


\pfig{p5inv1}{Strands partition a tile
into vertex, edge and face parts.}

\ignore{{
\medskip

\kub{here some older bits}

\medskip

Noting Lemma~\ref{lm:pf22} 
we have
the following  
generalisation of \cite[Cor.2]{scott}.

\begin{theorem} \label{th:th3}
On $A_n$
the map $\Scottt$ is injective, 
and maps to alternating strand diagrams. 
\end{theorem}

\begin{proof}
That the image of $\Scottt$ is an alternating strand diagram 
follows from Lemma~\ref{lm:pf22}. It remains to prove the injectivity. 

Consider each $\Scottt(T)$ as a graph on $S$, which thus
divides $S$ up into faces.
Now consider for a moment keeping the tiling $T$
when drawing $\Scottt(T)$.
Then the strand segments
in a tile $t$ subdivide it into a neighbourhood of each
vertex (with clockwise oriented boundary),
a neighbourhood of each edge with alternating boundary orientation,
and a complementary region with counterclockwise boundary orientation
--- see Fig.\ref{fig:p5inv1}.
%
Observe that the alternating faces --- the
shaded faces in Fig.\ref{fig:p5inv1} ---
combine in  $\Scottt(T)$
to form alternating quadrilateral faces that `connect' vertex faces.
Of course these are present in  $\Scottt(T)$ without reference to $T$.
Thus  $\Scottt(T)$ determines a tiling.
We have constructed 
  an inverse to $\Scottt$. 
\end{proof}


\begin{lm}\label{lm:properties-image}
If an alternating strand diagram $d$ is in $ \Scottt(A_n)$
then:  
(i) The alternating \face s 
in the interior of $d$ have four sides; 
and the  `alternating' \face s on the boundary have three sides. 
(ii) The oriented \face s in the interior are counterclockwise and have at least $3$ sides; 
on the boundary they are clockwise.  
\end{lm}
\begin{proof}
Consider a tiling $T$ such that $d=\Scottt(T)$. 
Let $t$ be a tile of size $r$ in $T$, then 
the strands of 
$\Scottt(t)$ divide $t$ up into alcoves. Clearly one of these is a
complete 
counterclockwise \face\  of $d$
formed by $r$ strand segments, hence with $r$ sides. 
There is also an alcove at each vertex.
The two strand segments in its boundary are oriented clockwise.
There is also an alcove for each edge.
The three strand segments in its boundary have alternating orientation.
It will be clear 
that the disjoint union of these three types of alcove is all the
alcoves of $\Scottt(t)$.


Now we look at all $t$ in $T$ together. The tiles meet at the diagonals 
of $T$. Whenever two tiles share an edge to form a diagonal of $T$, 
the respective alcoves at this common edge form 
a four-sided alternating face. 
At the vertices of $T$, the alcoves of the tiles form a clockwise region with 
the number of sides equal to 1 + the number of tiles incident with the vertex. 

This proves the lemma. 
%
\end{proof}

}}

\newcommand{\frpg}{fully reduced plabic graph}
\newcommand{\Frpg}{Fully reduced plabic graph}


\subsection{Maps $D,D'$ between strand diagrams and plabic graphs}
\label{ssec:maps-str-plab} $\;$




\mdef \label{de:plab1}
A {\em plabic graph} $\pgr$ is 
 a planar, disk-embedded undirected graph with two `colours' of
vertices/nodes, considered up to homotopy \Pv{Definition 11.5}. 
Vertices are allowed on the disk boundary. 
The rank of $\gamma$ is the number of these `tagged' vertices.
In rank $n$ they are labelled $\{1,2,...,n \}$ clockwise.
%


Postnikov defines 
`moves' on plabic graphs in \Pv{\S12}: 

\[
\includegraphics[scale=.45]{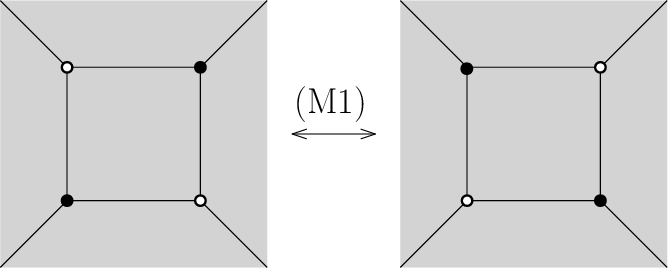} 
\hskip .5cm
\includegraphics[scale=.45]{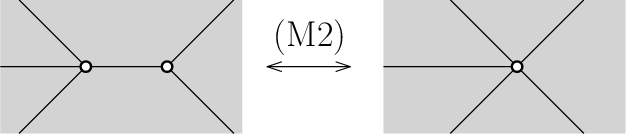} 
\hskip .5cm
\includegraphics[scale=.45]{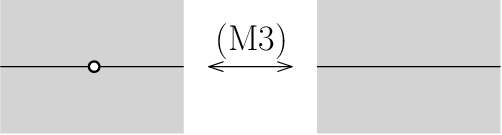} 
\]
with M2-3 also for black nodes.
In M2 any number of incoming edges is allowed.
Postnikov also defines reductions on plabic graphs:
\[
\includegraphics[scale=.6]{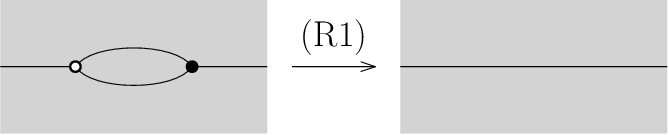} 
\hskip 1.5cm
\includegraphics[scale=.6]{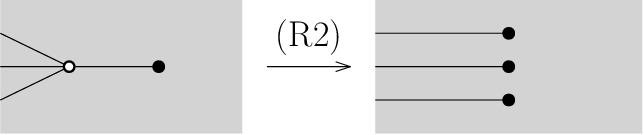} 
\]
and similarly with colours reversed.
The {\em move-equivalence class} of $\pgr$ is
its orbit under (M1-3). 
A plabic graph of rank $n$ is {\em reduced} if it 
has no connected component without boundary vertices;
and if there is no graph in its move-equivalence class 
to which (R1) or (R2) can be applied. 
See \Pv{\S12} for details.

We write $\Pbr_n$ for the set of reduced plabic graphs 
of rank $n$.



\smallskip

Recall the map $\Gst$ on $A_n$ to plabic graphs from \S\ref{ss:intro}.
If $T$ is a tiling of an $n$-gon, we draw a white node at each vertex of
the polygon and a black node in each tile, connecting the latter 
by edges with the white nodes at the  vertices of the tile. 
One can see that the graph produced has no parallel bicoloured edges
and no internal leaves with bicoloured edges.
Thus $\Gst:A_n\to \Pbr_n$.

Postnikov's plabic {\em networks}
are generalisations of the above including face
weights.
Here it will be convenient to consider another kind of generalisation. 




\mdef
For any planar graph $L$ there is a {\em medial graph} $\mm(L)$
(see e.g. \cite[\S12.3]{Baxter82}), 
which is a planar graph distinct from but overlaying $L$. 
We obtain $\mm(L)$ by drawing a vertex $\mm(e)$ on each edge $e$ of $L$,
then whenever edges $e,e'$ of $L$ are incident at $v$ and bound
the same face we draw an edge $\mm(e)$-$\mm(e')$.

\mdef \label{de:DL}
For $\mm(L) $  we note the following. 
(1) $\mm(L)$ has 
a polygonal face $p_v$ around each vertex $v$ of $L$.
(2) Monogon and digon faces are allowed --- see Figure~\ref{fig:D1}
(so edges may not be straight).
(3)
The faces of $\mm(L)$ are of two types: 
containing a vertex of $L$, 
or not. 
Given an asignment of a colour (black/white) to each vertex of $L$ 
then we 
get a digraph  $\overrightarrow{\mm}(L)$ by asigning an orientation to each 
polynomial face:
counterclockwise if $v$ is black and clockwise otherwise. 
(4)
If $L$ is bipartite and indeed 2-coloured then 
for this asignment
the orientations in $\overrightarrow{\mm}(L)$ have
the property that we may reinterpret the collection of meeting
oriented polygons 
as a collection of crossing oriented strands, denoted $D_L$.

\mdef \label{de:mvtag} 
Suppose $L$ has some labelled exterior vertices.
A `half-edge' or `tag' may be attached to any such vertex $v$ 
(specifically one usually thinks of $L$ bounded in a disk in the
plane, and the tag as an edge passing out through the boundary)
whereupon there is a medial vertex $\mm(v)$ on the half-edge,
and the (exterior) medial
edge around $v$ becomes two segments incident at $\mm(v)$.
In this case, if $v$ is labelled in $L$ then we say that $\mm(v)$
inherits this label in $\mm(L)$. 

\mdef
Noting (\ref{de:DL}) and (\ref{de:mvtag}), the map 
$$
D:\Pbr_n \rightarrow \Po_n
$$ 
may be defined by $D(L)=D_L$.


\newcommand{\ing}[2]{\includegraphics[width= #1 cm]{\newfigd #2.eps}}
\begin{figure}
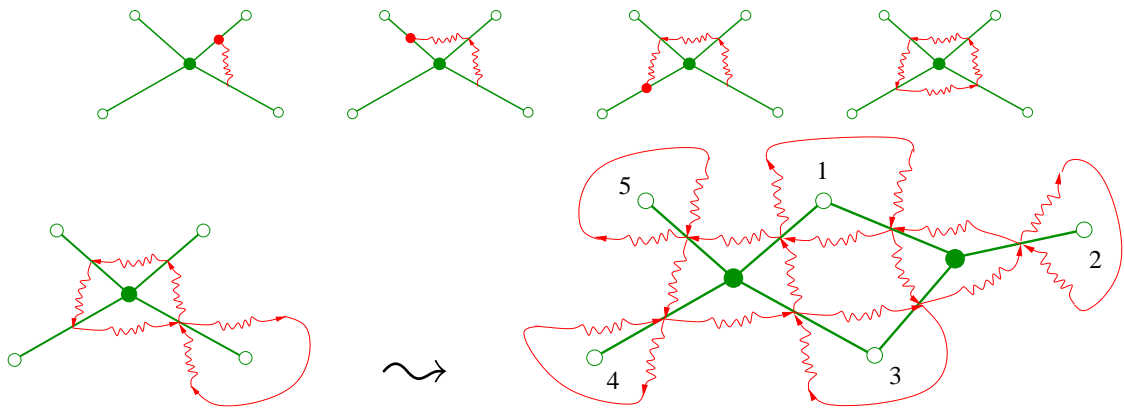

\[
\ing{2.46}{Dmap2} \qquad
\ing{2.46}{Dmap3} \qquad
\ing{2.46}{Dmap4} \qquad
\ing{2.46}{Dmap5}
\] 
\[
\ing{4.0}{Dmap6} \hspace{.32in}
\raisebox{.1in}{ {\huge $\leadsto$} } \hspace{.32in}
\ing{7.9}{Dmap8}
\]
\caption{\label{fig:D1} Constructing the $D$-map
}
\end{figure}


\mdef A {\em fully reduced} plabic graph is a reduced plabic graph
without non-boundary leaves; and without unicolored edges.
In particular it is a connected 2-coloured planar graph.
Write $\Pf_n$ for the set of fully reduced plabic graphs of rank $n$. 



Postnikov's Corollary 14.2(1) 
can now be summarized as: $L \mapsto D_L$ 
restricts to a bijection  $D: \Pf_n \rightarrow \Po_n$.


\mdef
Postnikov gives a map 
$$
D': \Po_n \rightarrow \Pf_n
$$
as follows, that inverts $D$.
\label{def:D'-map}
Let $d$ be an alternating strand diagram. Then $D'(d)=\pgr_d$ is the plabic 
graph we obtain by drawing 
a white vertex in each clockwise oriented \face\ and 
a black vertex in each counterclockwise \face. 
Two vertices are connected by an edge if and only if their faces
are opposite each other at the crossing point of a 
pair of crossing strands. 
(Example:
Figure~\ref{fig:pleb1}.) 



\subsection{Properties of the map $\Gst$}  
\label{ssec:tiling-plabic}

$\;$ 

We note that $\Gst$ is the composition $D' \circ \Scottt$. Since $D'$ is a bijection and 
$\Scottt$ is injective (Theorem~\ref{th:th3}), $\Gst$ is injective. 
In this section, we give an intrinsic characterization of the image of $\Gst$.

\mdef 
Let $u$ and $v$ be two black nodes in $\pgr\in \Pf_n$ that are on a common quadrilateral. 
If $u$ has degree $r+2$ and is incident with $r\ge 1$ leaves, 
we say that $\pgr$ has an {\em $r$-\bouquet} 
at $u$ or a {\em \bouquet} at $u$. The subgraph on the quadrilateral 
and on the $r$ leaves is the \bouquet\ at $u$. 

The first figure below is a bouquet at $u$ with 
4 leaves. The second figure shows two (non-disjoint)
bouquets, one at $u$ and one at $v$. The second graph  has two \bouquet s. It 
satisfies the conditions for $\Pfim_n$ of Definition~\ref{def:G-image}.

\[
\includegraphics[scale=.5]{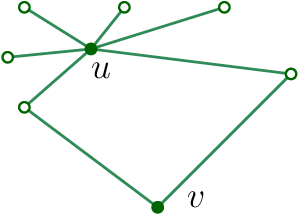}  
\hskip 1cm 
\includegraphics[scale=.5]{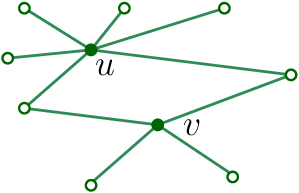} 
\]



\newcommand{\axioma}{a} 
\newcommand{\axiomb}{b} 
\newcommand{\axiomc}{c} 
\newcommand{\axiomd}{e} 
\newcommand{\axiome}{d} 
\newcommand{\node}{node}

\begin{defn}\label{def:G-image}
The set 
$\Pfim_n$ of {\em \rhombic\ graphs} is 

the set of connected fully reduced plabic graphs $\pgr$ in $\Pf_n$
containing at least one black \node\ and such that  

\noindent
(\axioma) the tagged \node s (in the sense of (\ref{de:plab1}))
are white and all other \node s are black,  
\\
(\axiomb) every black \node\ has degree $\ge 3$, 
\\
(\axiomc) 
every closed face is a quadrilateral,

\noindent
(\axiome) 
in the fan of edges coming out of a white node every adjacent pair is part of a quadrilateral. 
\end{defn}


\mdef \label{eq:axiomd}
We observe that conditions (\axioma) and (\axiomb) imply: 
%
(\axiomd)
Two faces of a \rhombic\ graph 
share at most one edge.

\smallskip

For $n=3,4,5$, $\Pfim_n$ has 1,3,11 elements respectively,
cf. Figure~\ref{fig:pfim-low-rank}. 
\begin{figure}
\[
\includegraphics[scale=.52]{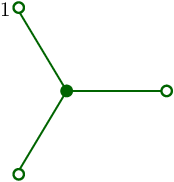}
\hskip 1cm  
\includegraphics[scale=.65]{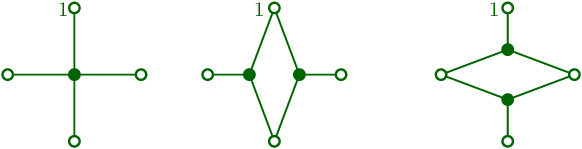}
\]
\[
\includegraphics[scale=.65]{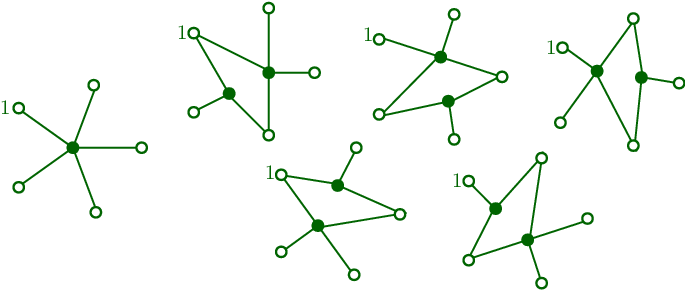}
\hskip .2cm  
\includegraphics[scale=.65]{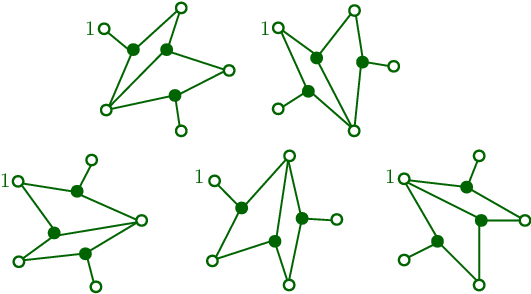}
\]
\caption{$\Pfim_n$ in ranks $n=3,4,5$}\label{fig:pfim-low-rank}
\end{figure}



\begin{lm}
\label{lm:yay}  
If $\pgr\in \Pfim_n$, $\pgr$ not a star, 
then $\pgr$ has at least two \bouquet s.
\end{lm}
\begin{proof}
Forget the leaves for a moment, so we have graph of quadrilaterals. 
Now consider the exterior `face` subgraph - a 2-coloured loop.
We see (e.g. by induction on number of faces, using
(\ref{eq:axiomd})) 
that 

this must have at least 2 black corners 
(black nodes touching only 1 quadrilateral).
\end{proof}


We note that $\Gst(A_n) \subseteq \Pfim_n$.
Our next goal is to get an inverse to the map $\Gst$, going from \rhombic\ graphs 
to tilings.  
One ingredient is the following lemma which says that if we split an
element of $\Pfim_n$  
at a \bouquet\ at node $u$, we obtain a star graph and an element $\pgr_u$ 
of $\Pfim_n$. 

\mdef
Let $\pgr$ be a plabic graph 
containing a \bouquet\ at vertex $u$, with $u$ of degree $r+2$. 
Define $\pgr_u$ as the full subgraph on the vertex set excluding
$u$ and its leaves. 
We denote by $\pgr_s$ the full subgraph on $u$ and all white nodes 
incident with $u$. 

For example here $\pgr_s$ is the upper graph on the right and 
$\pgr_u$ is the lower graph on the right. 

\[
\includegraphics[scale=.6]{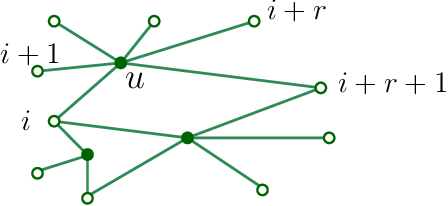}  
\hskip 1cm  \stackrel{\mbox{\tiny split}}{\leadsto} \hskip 1cm
\includegraphics[scale=.6]{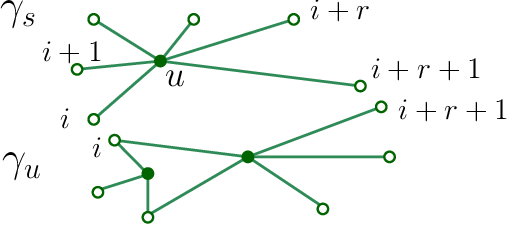} 
\]



\begin{lm} \label{lm:bouquet-removal}
Let $\pgr\in \Pfim_n$, $\pgr$  not a star. 
If $\pgr$ has a bouquet at 
$u$   
then $\pgr_u\in\Pfim_n$. 
\end{lm}

\begin{proof}
Note that $\pgr_u$ inherits  
(\axioma) and (\axiomb) of Definition~\ref{def:G-image} from $\pgr$. 
Denote the second black 
node of the \bouquet\ at $u$ by $v$. 
When splitting, the quadrilateral face involving $u$ and $v$ becomes 
a boundary face of $\pgr_u$. 
All other faces of $\pgr_u$ 
are faces of $\pgr$. So (\axiomc) also holds. 

It remains to see that (\axiome) holds for $\pgr_u$. The 
only vertices to check are $i+1$ and $i+r+1$. 
In $\pgr$, every adjacent pair of edges at $i+1$ (or at $i+r+1$ respectively) 
are part of a quadrilateral. When going to $\pgr_u$, one extremal edge of the 
fan is removed, so the remaining edges still satisfy (\axiome). 
\end{proof}

\mdef 
\label{def-Gprime}
Let $\pgr\in\Pfim_n$.  
Consider the set 
\[
\{[i,j]\mid \mbox{$i$ and $j$ are white nodes in a quadrilateral of $\pgr$}\} 
\]
Note
(by \ref{def:G-image}(\axioma) and the construction)
that 
this forms a collection of pairwise non-crossing diagonals of an $n$-gon. 
We denote this tiling $T=\Gprime(\pgr)$. 


\begin{theorem}\label{thm:iso-Pfim}
The map $\Gprime$ is the inverse to a bijection $\Gst$: $A_n\to \Pfim_n$. 
\end{theorem}

\begin{proof}
We will show that $\Gst\Gprime(\pgr)=\pgr$ for every $\pgr\in\Pfim_n$. 


We use induction on $n$. 
If $|\pgr|=3$, by Definition~\ref{def:G-image}, 
$\pgr$ does not contain any quadrilaterals, hence is a star, and 
$\Gprime(\pgr)$ is the untile $T$ of a triangle, with 
$G(T)=\pgr$. 
So assume that the claim is true for $\Pfim_{n-1}$. 
Take $\pgr\in\Pfim_n$. 
%
If $\pgr$ is a star, $T=\Gprime(\pgr)$ is the untile 
of the $n$-gon and $\Gst(T)=\pgr$. 
So assume $\pgr$ is not a star. By Lemma \ref{lm:yay}, it then contains at least 
two \bouquet s, say an 
$r$-\bouquet\ for some $1\le r< n-2$.

We split $\pgr$ at the 
\bouquet\ and obtain a star $\pgr_s$ and the graph $\pgr_u$. 
Let the white nodes of this 
star be $i,i+1,\dots, i+r+1$ (reducing mod $n$). 
Then the white nodes of $\pgr_u$ are 
$i+r+1, i+r+2,\dots, i$ (reducing mod $n$). 
Graphs $\pgr_s$ and $\pgr_u$ are elements of 
$\Pfim_{r+2}$ (with $r+2<n$) and $\Pfim_{n-r}$ respectively by 
Lemma~\ref{lm:bouquet-removal}. 
So by induction for the tilings $T_s=\Gprime(\pgr_s)$ and 
$T_u=\Gprime(\pgr_u)$ of polygons we have $\Gst(T_s)=\pgr_s$ and 
$\Gst(T_u)=\pgr_u$. 

Tiling
$T_s$ is the untile of the polygon $P_s$ on the vertices $i,i+1,\dots, i+r+1$; 
$T_u=\Gprime(\pgr_u)$ a tiling of 
the polygon $P_u$ on the vertices $i+r+1, i+r+2,\dots, i$. 

We glue the two polygons $P_s$ and $P_u$ along the boundary edges 
$[i+r+1,i]$ and $[i,i+r+1]$ to obtain an $n$-gon $P$ with vertices $1,2,\dots, n$  
and tiling $T$ given by 
the union of the diagonals of $T_s$, $T_u$ and diagonal $[i,i+r+1]$. 

Since $T$ contains a diagonal exactly for every quadrilateral in 
$\pgr$, $T=\Gprime(\pgr)$. 
By construction, $\Gst(T)=\pgr$. 
\end{proof}

\subsection{$\Pfim_n$ and equivalence classes under moves}

$\ $

\newcommand{\rhod}{\rho_{\diamond}} 

\mdef
We define  `moves' $\rhod$ 
on elements of $\Pfim_n$ as 
in Fig.\ref{fig:move-Pt}
(these moves are a particular combination of M1 and M2 from 
\S\ref{ssec:maps-str-plab}). 

\begin{figure}
\[
\includegraphics[scale=.5]{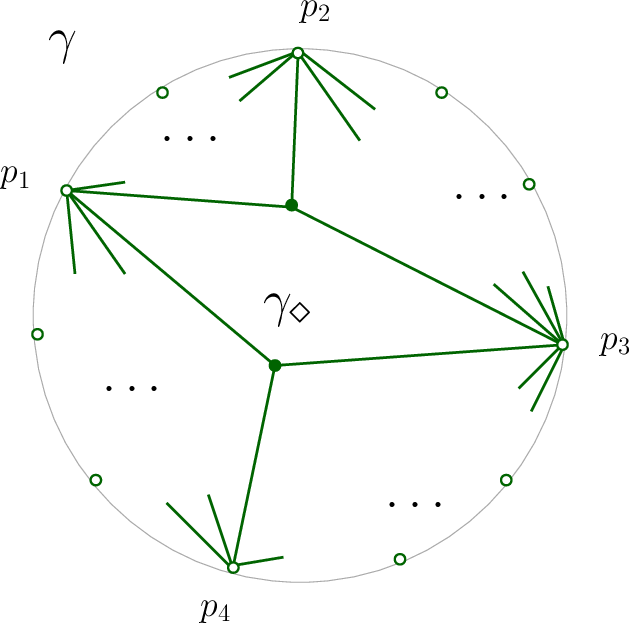} 
\hskip .5cm \stackrel{\rho_{\diamond}}{\longleftrightarrow}
\includegraphics[scale=.5]{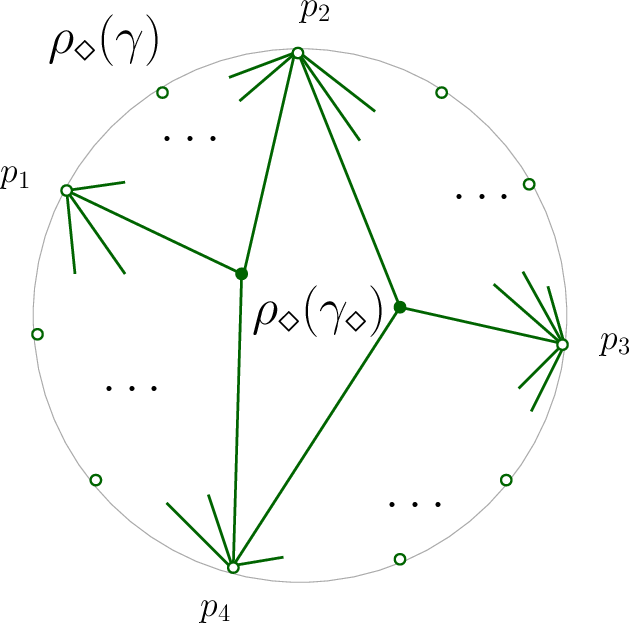}
\]
\caption{Move $\rhod$ in $\Pfim_n$}\label{fig:move-Pt}
\end{figure}


\begin{figure}
\[
\includegraphics[scale=.52]{\newfigd Pt3.eps}
\hskip .4cm  
\includegraphics[scale=.65]{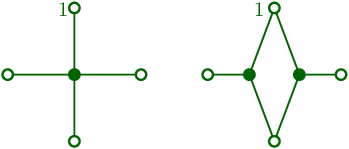}
\]
\[
\includegraphics[scale=.65]{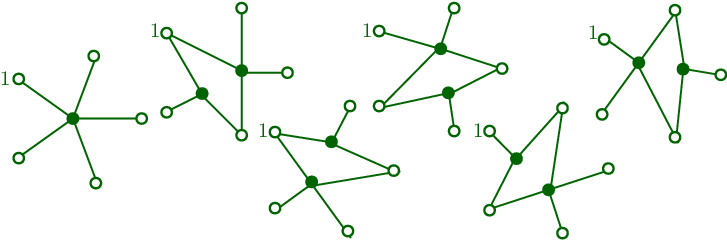}
\hskip .4cm  
\includegraphics[scale=.65]{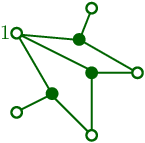}
\]
\caption{Classes $\Pfim_n/\sim$ in ranks $n=3,4,5$}\label{fig:pfim-equiv}
\end{figure}


\begin{lm}\label{lm:move-flip}
Under the bijection $\Gprime:\Pfim_n\to A_n$, a move $\rhod$ corresponds 
to a flip in a tiling. 
\end{lm}
\begin{proof}
Consider $\pgr\in\Pfim_n$, let $T=\Gprime(\pgr)$. 
Any quadrilateral $\diamond$ in $\pgr$ corresponds to a diagonal
$[p_1,p_3]$ in $T$. 
Now assume 
that two black nodes $u_1,u_2$ of the quadrilateral $\diamond$ have degree three and let $p_2$, $p_4$ 
be the other two white nodes adjacent to the black nodes of
$\diamond$. 
Then the four full 
subgraphs $p_i,u_j,p_{i+1}$ (with $i,j$ appropriate) 
of $\pgr$ are either boundary paths or part of quadrilaterals. 
In the former case, 
the image of $\pgr$ under $\Gprime$ has a boundary segment
$[p_i,p_{i+1}]$; 
in the latter case, it has a diagonal $[p_i,p_{i+1}]$. 
In any case, $T$ contains a triangulated quadrilateral $p_1,p_2,p_3,p_4$ with diagonal $[p_1,p_3]$ 
and the move $\rho_{\diamond}$ corresponds to the exchange $[p_1,p_3]\longleftrightarrow [p_2,p_4]$ 
in $T$. 
\end{proof}

Given Lemma~\ref{lm:move-flip}
we can then define move-$\rhod$ equivalence classes 
 $\Pfim_n/\!\!\!\sim$
on $\Pfim_n$. 
Furthermore, 
the number of equivalence classes are the same as $|\AE_n|$. 

For $n=3,4,5$, one can readily confirm $1,2,7$ classes respectively 
using Figure~\ref{fig:pfim-equiv}. (Although this does not
provide any obvious new method to compute in higher ranks, 
cf. \S\ref{sec:count}.)


\mdef
On strand diagrams, flip corresponds to a combination of moves 
from Figure~\ref{fig:sdmoves1} 
(recalled from~\cite[\S14]{Postnikov})  
--- the combination given in  
\S\ref{ss:scott}. 

\begin{figure}
\[
\includegraphics[width=4cm]{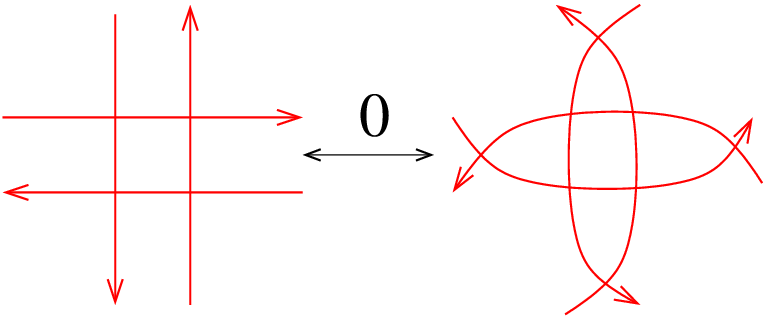}
\hskip 2.1cm
\includegraphics[width=4cm]{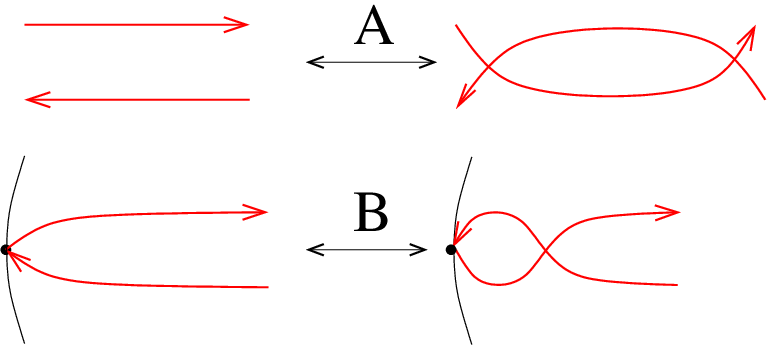}
\]
\caption{\label{fig:sdmoves1}
Postnikov's moves of alternating strand diagrams}
\end{figure}



\mdef
There are many beautiful set sequences in the little Schr\"oder
combinatoric \cite{Stanley97}
($A_n$ is a standard one, to which we have now added 
$\Pfim_n$ and $\XX_n$).
It is one nice problem 
for future consideration to recast
flip equivalence into cases such as Schr\"oder's original bracket
sequences  
(the set $\Omega_n$ is the set of properly nested
bracketings on a word of length $n-1$, where there is always an
(undrawn) outer bracketing and otherwise each bracket pair must
contain at least two symbols
--- the bijection with $A_n$ is elementary via
rooted versions of 
the dual trees of
\S\ref{ss:tree1}).
As a taste of this game, the first few in this case are as
follows:
$
\Omega_3 = \{ ab \}, \; 
\Omega_4 = \{ abc,(ab)c,a(bc) \}, \; 
$
and in A-complex form


\[
\Omega_5  \; = \;\;\;\;  \;\;
\xymatrix{
  ((ab)c)d \ar[d] \ar[drrr] & a((bc)d) \ar[d] \ar[drrr]
  & (ab)(cd) \ar[d] \ar[dll]
  & (a(bc))d \ar[d] \ar[dll] & a(b(cd)) \ar[d] \ar[dll]
  \\
  (ab)cd \ar[drr]        & a(bc)d \ar[dr] & ab(cd) \ar[d] & (abc)d
  \ar[dl] & a(bcd) \ar[dll]
  \\
                        &               & abcd
  }
\]


\ignore{{
\[
\Omega_5  \; = \ \ 
\xymatrix@C=-1em{
  a((bc)d) \ar[dr] \ar@{~}[rr]  && a(b(cd)) \ar[dr] \ar[dl]\ar@{~}[rr]
  && (ab)(cd) \ar[dr] \ar[dl]\ar@{~}[rr] && ((ab)c)d \ar[dr] \ar[dl]\ar@{~}[rr]
  && (a(bc))d \ar[dr] \ar[dl]\ar@{~}[rr] && \textcolor{gray}{a((bc)d)} \ar[dl] 
  \\
        &a(bcd) \ar[drrrr] && ab(cd) \ar[drr] && (ab)cd
  \ar[d] && (abc)d \ar[dll] && a(bc)d \ar[dllll] 
  \\
                        &&&& &   abcd
  }
\]
}}




\noindent Here flip equivalence collapses the entire first row to a point.








\medskip\medskip

\noindent {\bf Acknowledgements}.
We thank Henning H. Andersen, Aslak B. Buan and Walter Mazorchuk and the Mittag-Leffler
Institute for inviting us to the 2015 Semester on Representation Theory,
where this work was started.
We thank  Robert Marsh and Hannah Vogel for useful comments
on the manuscript. 
PM would like to thank EPSRC for partial funding under the grant
EP/I038683/1
``Algebraic, Geometric and Physical underpinnings of Topological
Quantum Computation''. 
KB would like to thank FWF for partial funding under projects 
P25141-N26, P 25647-N26 and W1230.

\bibliographystyle{amsplain}
\bibliography{bm}


\newpage


\appendix 

\section{The number of `Scott permutations'}

\noindent
\begin{center}
\begin{footnotesize}
{\sc Max Glick$^{\dagger}$ (as interpreted by KB, PPM)} 
\end{footnotesize}\end{center}


\newcommand{\beq}{\begin{equation}}
\newcommand{\eq}{\end{equation}}





Here
we will determine a generating function for the number of polygon
tilings up to flip equivalence and hence compute the asymptotic growth rate. 
As a warm-up we first recall the case for {\em all} tilings
--- the little Schr\"oder numbers. 

By $X_n$ we denote the set of  tilings of an $(n+2)$-gon, a (convex) polygon with
$n+2$ vertices. 
By $x_n$ we denote the number $|X_n|$.
Define
$
X(z)=\sum_{n=0}^{\infty} x_n z^n ,
$
with $x_0=1$. \\

Fix a `base' edge of the $(n+2)$-gon. Then we can decompose $X_n$
according to the number $r$ of edges in the tile
that meets this edge.
Writing $X_{n,r}$ for the subsets, $x_{n,r}$ for their sizes,
and $X^{(r)}$ for the generating functions, we have
\beq \label{eq:ArzAr}
X^{(r)} = z^{r-2} X^{r-1}     \hspace{1in} (r\geq 3)
\eq
since we may construct a tiling by attaching a tiling 
(by its base)
to each of the
non-base edges of the base tile. \\

Thus we obtain the standard results:
\beq
X= \sum_{r\ge 2}X^{(r)}=1+zX^2 + z^2X^3 + z^3X^4 + \dots = 1 + \frac{zX^2}{1-zX}
\eq
and hence
\[
X(z)=\sum_{n\ge 0}x_nz^n= \frac{z+1 - \sqrt{z^2-6z+1}}{4z}
\]

\medskip

\newcommand{\AED}{D}  

Now we turn to $\AE_n$. Write $d_n = |\AE_{n+2}|$
and $\AED(z) = \sum_n d_n z^n$.
We continue to hold fixed a base edge of the $(n+2)$-gon.
Note that an element of $\AE_{n+2}$ is now a {\em class} of tilings,
but
the number $r$ of edges of the tile incident to the base continues to
be well-defined. Thus we can partition $\AE_{n+2}$ into subsets
$\AE_{n+2,r}$ according to $r$.
Write $d_{n,r}$ and $\AED^{(r)}$ as above. 
%
We have
\[
\AED^{(r)}  = z^{r-2} \AED^{r-1} \hspace{1in} (r \geq 4 )
\]
by an analogous argument to the $A$-case
in (\ref{eq:ArzAr}).
However, the case $r=3$ is made more complicated by the equivalence relation.  For convenience let $b_n=d_{n,3}$ and $c_n = d_n-b_n$.  The corresponding generating functions are $B = \AED^{(3)}$ and 
\beq \label{eq:C1D}
C = 1+ \sum_{r \geq 4} \AED^{(r)} = 1 + z^2D^3 + z^3D^4 + \dots
\eq
so that
\beq
\AED 
 = B + C
\eq
We may then write
\beq \label{eq:BzDC}
B = z \AED C
\eq
Proof of (\ref{eq:BzDC}): 
Consider a class of tilings whose base tile is a triangle and a representative therein.
Here the base triangle lies in some connected triangulated part.
Note that we can always choose a representative so that this triangle
is the rightmost triangle in this connected triangulated part.
(This choice breaks an overall symmetry in the set, but this need not concern
us.)
The result now follows
on noting that we may attach a (representative) tiling
with any base on the
left, and one from the subset with non-triangle base on the right.
\qed



To illustrate (\ref{eq:BzDC}) schematically: 
take a triangle with a distinguished base edge, glue tilings to
the left hand edge and tilings with a tile of size $\ge 4$ to the
right hand  edge. 
\[
\includegraphics[width=6cm]{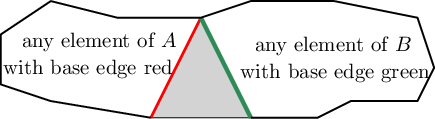}
\]
Example:
This gives 
$b_4 = d_3c_0 + d_2c_1 + d_1c_2 + d_0c_3$ (note that
$c_1=0$), or, in tilings, with the base triangle shaded:  
\[
\includegraphics[scale=.5]{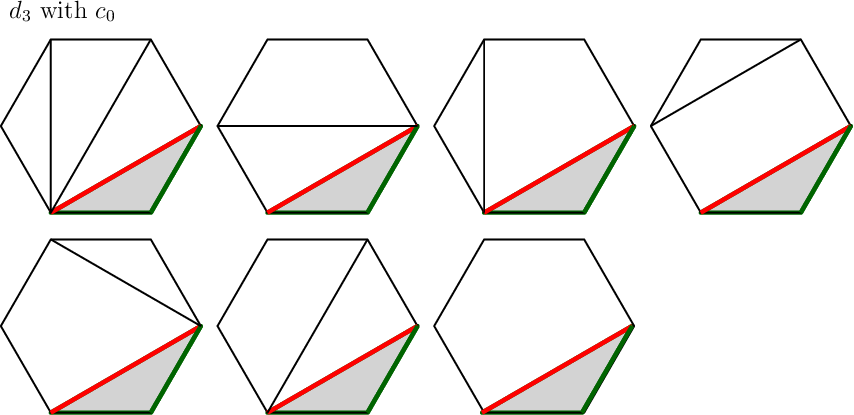}
\]
\[
\includegraphics[scale=.5]{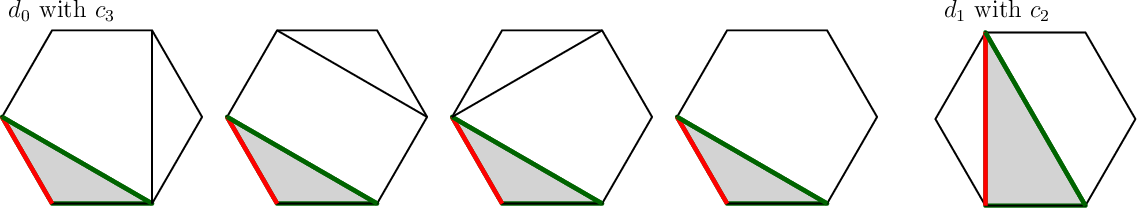}
\]


Eliminating $B,C$ from our formulae we obtain 
a quartic for $D$:
\beq \label{eq:quartic}
  Y^4 + Y^3 + Y^2(1-x) -Y + x = 0
\eq
where $Y=xD$.
Computing the discriminant yields a dominant singularity of $D$
at 0.19448....
Thus the asymptotic ratio of coefficients is the reciprocal
\[
\lim_{n\rightarrow\infty} \frac{d_{n+1}}{d_n} = 5.1418....
\]
(Finer details of the asymptotic behaviour can be determined --- see
for example \cite{FS}.)
One can compare this with the asymptotic ratio for the little Schr\"oder
numbers which is famously $3+2\sqrt{2} = 5.8284... $;
and of course to the asymptotic ratio of sizes of the sets of all
permutations, which is unbounded.

Equation \eqref{eq:quartic} gives rise to a recurrence for the 
$d_n$ making it possible to compute several terms easily. Here, we list the first 15 terms: 
\[
\begin{array}{|r|rrrrrrrrrrr|}
\hline
n   & 1 & 2 & 3 & 4 & 5 & 6 & 7 & 8 & 9 & 10  & \phantom{}
\\
\hline
d_n & 1 & 2 & 7 & 26 & 100 & 404 & 7254 & 31726 & 140964 & 634506 
     & \\
\hline
\end{array}
\]
%
\[
\begin{array}{|r|rrrrr|}
\hline
n   & 11 & 12 & 13 & 14 & 15  
\\
\hline
d_n 
     & 2887168 & 13258914 &  61373864 &  286053987 & 1341325126  \\
\hline
\end{array}
\]

\medskip

\phantom{bla}

\hfill

\noindent Acknowledgement. KB thanks Peter Grabner for useful conversations.


\noindent
\begin{footnotesize}
$^{\dagger}$ {\sc Department of Mathematics, University of Connecticut, Storrs, Connecticut, USA}
\end{footnotesize} 

\medskip


\end{document}


\maketitle



Here
we will determine a generating function for the number of polygon
tilings up to flip equivalence and hence compute the asymptotic growth rate. 
As a warm-up we first recall the case for {\em all} tilings
--- the little Schr\"oder numbers. 

By $X_n$ we denote the set of  tilings of an $(n+2)$-gon, a (convex) polygon with
$n+2$ vertices. 
By $x_n$ we denote the number $|X_n|$.
Define
$
X(z)=\sum_{n=0}^{\infty} x_n z^n ,
$
with $x_0=1$. \\

Fix a `base' edge of the $(n+2)$-gon. Then we can decompose $X_n$
according to the number $r$ of edges in the tile
that meets this edge.
Writing $X_{n,r}$ for the subsets, $x_{n,r}$ for their sizes,
and $X^{(r)}$ for the generating functions, we have
\beq \label{eq:ArzAr}
X^{(r)} = z^{r-2} X^{r-1}     \hspace{1in} (r\geq 3)
\eq
since we may construct a tiling by attaching a tiling 
(by its base)
to each of the
non-base edges of the base tile. \\

Thus we obtain the standard results:
\beq
X= \sum_{r\ge 2}X^{(r)}=1+zX^2 + z^2X^3 + z^3X^4 + \dots = 1 + \frac{zX^2}{1-zX}
\eq
and hence
\[
X(z)=\sum_{n\ge 0}x_nz^n= \frac{z+1 - \sqrt{z^2-6z+1}}{4z}
\]

\medskip

\newcommand{\AED}{D}  

Now we turn to $\AE_n$. Write $d_n = |\AE_{n+2}|$
and $\AED(z) = \sum_n d_n z^n$.
We continue to hold fixed a base edge of the $(n+2)$-gon.
Note that an element of $\AE_{n+2}$ is now a {\em class} of tilings,
but
the number $r$ of edges of the tile incident to the base continues to
be well-defined. Thus we can partition $\AE_{n+2}$ into subsets
$\AE_{n+2,r}$ according to $r$.
Write $d_{n,r}$ and $\AED^{(r)}$ as above. 
%
We have
\[
\AED^{(r)}  = z^{r-2} \AED^{r-1} \hspace{1in} (r \geq 4 )
\]
by an analogous argument to the $A$-case
in (\ref{eq:ArzAr}).
However, the case $r=3$ is made more complicated by the equivalence relation.  For convenience let $b_n=d_{n,3}$ and $c_n = d_n-b_n$.  The corresponding generating functions are $B = \AED^{(3)}$ and 
\beq \label{eq:C1D}
C = 1+ \sum_{r \geq 4} \AED^{(r)} = 1 + z^2D^3 + z^3D^4 + \dots
\eq
so that
\beq
\AED 
 = B + C
\eq
We may then write
\beq \label{eq:BzDC}
B = z \AED C
\eq
Proof of (\ref{eq:BzDC}): 
Consider a class of tilings whose base tile is a triangle and a representative therein.
Here the base triangle lies in some connected triangulated part.
Note that we can always choose a representative so that this triangle
is the rightmost triangle in this connected triangulated part.
(This choice breaks an overall symmetry in the set, but this need not concern
us.)
The result now follows
on noting that we may attach a (representative) tiling
with any base on the
left, and one from the subset with non-triangle base on the right.
\qed



To illustrate (\ref{eq:BzDC}) schematically: 
take a triangle with a distinguished base edge, glue tilings to
the left hand edge and tilings with a tile of size $\ge 4$ to the
right hand  edge. 
\[
\includegraphics[width=6cm]{\newfigd b-general.eps}
\]
%
Example:
This gives 
$b_4 = d_3c_0 + d_2c_1 + d_1c_2 + d_0c_3$ (note that
$c_1=0$), or, in tilings, with the base triangle shaded:  
%
\[
\includegraphics[scale=.5]{\newfigd b4b.eps}
\]
%
\[
\includegraphics[scale=.5]{\newfigd b4a.eps}
\]


Eliminating $B,C$ from our formulae we obtain 
a quartic for $D$:
\beq \label{eq:quartic}
  Y^4 + Y^3 + Y^2(1-x) -Y + x = 0
\eq
where $Y=xD$.
Computing the discriminant yields a dominant singularity of $D$
at 0.19448....
Thus the asymptotic ratio of coefficients is the reciprocal
\[
\lim_{n\rightarrow\infty} \frac{d_{n+1}}{d_n} = 5.1418....
\]
(Finer details of the asymptotic behaviour can be determined --- see
for example \cite{FS}.)
One can compare this with the asymptotic ratio for the little Schr\"oder
numbers which is famously $3+2\sqrt{2} = 5.8284... $;
and of course to the asymptotic ratio of sizes of the sets of all
permutations, which is unbounded.

Equation \eqref{eq:quartic} gives rise to a recurrence for the 
$d_n$ making it possible to compute several terms easily. Here, we list the first 15 terms: 
%
\[
\begin{array}{|r|rrrrrrrrrrr|}
\hline
n   & 1 & 2 & 3 & 4 & 5 & 6 & 7 & 8 & 9 & 10  & \phantom{}
\\
\hline
d_n & 1 & 2 & 7 & 26 & 100 & 404 & 7254 & 31726 & 140964 & 634506 
     & \\
\hline
\end{array}
\]
%
\[
\begin{array}{|r|rrrrr|}
\hline
n   & 11 & 12 & 13 & 14 & 15  
\\
\hline
d_n 
     & 2887168 & 13258914 &  61373864 &  286053987 & 1341325126  \\
\hline
\end{array}
\]

\medskip

\phantom{bla}

\hfill

\noindent Acknowledgement. KB thanks Peter Grabner for useful conversations.


\noindent